\newtheorem{theorem}{Theorem}
\newtheorem{lemma}[theorem]{Lemma}
\newtheorem{corollary}[theorem]{Corollary}
\newtheorem{remark}[theorem]{Remark}
\renewcommand{\epsilon}{\varepsilon}
\newcommand{\eps}{\varepsilon}
\newcommand{\EE}{{\mathbb E}}
\newcommand{\PP}{{\mathbb P}}
\newcommand\Var{\operatorname{Var}}
\newcommand{\RR}{{\mathbb R}}
\newcommand{\cB}{{\mathcal B}}
\newcommand{\cD}{{\mathcal D}}
\newcommand{\cE}{{\mathcal E}}
\newcommand{\cF}{{\mathcal F}}
\newcommand{\cG}{{\mathcal G}}
\newcommand{\cH}{{\mathcal H}}
\newcommand{\cI}{{\mathcal I}}
\newcommand{\cQ}{{\mathcal Q}}
\newcommand{\cZ}{{\mathcal Z}}
\newcommand\nopf{\qed}
\newcommand{\indic}[1]{\mathbbm{1}_{\{#1\}}}
\newcommand\floor[1]{\lfloor {#1} \rfloor}
\newcommand{\DM}{{\Delta M}}
\newcommand{\DY}{{\Delta Y}}
\begin{document}

\title{On the method of typical bounded differences} 
\author{Lutz Warnke%
\thanks{Peterhouse, Cambridge CB2 1RD, UK. 
E-mail: {\tt L.Warnke@dpmms.cam.ac.uk}. 
Part of the work was done while interning with the Theory Group at 
Microsoft Research, Redmond.}}
\date{December 21, 2012}
\maketitle

\begin{abstract}
Concentration inequalities are fundamental tools in probabilistic 
combinatorics and theoretical computer science for proving that 
random functions are near their means. Of particular importance is 
the case where $f(X)$ is a function of independent random variables 
$X=(X_1, \ldots, X_n)$. Here the well known \emph{bounded differences 
inequality} (also called McDiarmid's or Hoeffding--Azuma inequality) 
establishes sharp concentration if the function $f$ does not depend 
too much on any of the variables. 
One attractive feature is that it relies on a very simple Lipschitz 
condition (L): it suffices to show that $|f(X)-f(X')| \leq c_k$ 
whenever $X,X'$ differ only in $X_k$. While this is easy to check, 
the main disadvantage is that it considers \emph{worst-case} changes 
$c_k$, which often makes the resulting bounds too weak to be useful. 

In this paper we prove a variant of the bounded differences inequality 
which can be used to establish concentration of functions $f(X)$ where 
(i) the \emph{typical} changes are small although (ii) the worst case 
changes might be very large. 
One key aspect of this inequality is that it relies on a simple 
condition that (a) is easy to check and (b) coincides with heuristic 
considerations why concentration should hold. Indeed, given an event 
$\Gamma$ that holds with very high probability, we essentially relax 
the Lipschitz condition (L) to situations where $\Gamma$ occurs. The 
point is that the resulting \emph{typical} changes $c_k$ are often 
much smaller than the worst case ones. 

To illustrate its application we consider the reverse $H$-free 
process, where $H$ is $2$-balanced. We prove that the final number of 
edges in this process is concentrated, and also determine its likely 
value up to constant factors. 
This answers a question of Bollob{\'a}s and Erd{\H o}s. 
\end{abstract}

\section{Introduction}
In probabilistic combinatorics and theoretical computer science it is 
often crucial to predict the likely value(s) of a random function. 
More precisely, in many applications $f(X)$ is a function of 
independent random variables $X=(X_1, \ldots, X_N)$, and we need to 
prove that it is concentrated in a narrow range around its expected 
value, i.e., that $f(X)$ typically is about $\mu=\EE f(X)$. 
The crux is that the functions of interest are often defined in an 
indirect or complicated way, so that basic bounds such as Chebychev's 
inequality are either hard to evaluate or give error bounds that are 
too weak in applications. 
In this work we thus investigate easy-to-check conditions which 
ensure that the function $f(X)$ is close to its mean $\mu$ with very 
high probability, i.e., that large deviations from $\mu$ are highly 
unlikely.

An important paradigm in this area of research (see e.g.~\cite{Talagrand1996}) 
states that a random function which depends `smoothly' on many 
independent random variables should be sharply concentrated, meaning 
that $|f(X)-\mu|=o(\mu)$ holds with probability very close to one. 
In many applications (e.g.\ the design of randomized algorithms or 
random graph theory) each random variable $X_k$ takes values in a set 
$\Lambda_k$, and in this case a discrete Lipschitz condition for 
$f:\prod_{j \in [N]}\Lambda_j \to \RR$ conveniently ensures that 
$f(X)$ does not depend too much on any of the variables, where 
$[N]=\{1, \ldots, N\}$. 
Perhaps the most famous result in this context is the \emph{bounded 
differences inequality} (also called McDiarmid's or Hoeffding--Azuma 
inequality), which is nowadays widely used in discrete mathematics 
and computer science, see e.g.\ the surveys~\cite{McDiarmid1989,McDiarmid1998}. 
Here we only state its one-sided version since the analogous lower 
tail estimate $\PP(f(X) \le \mu-t)$ follows by considering the 
function $-f(X)$. 
\begin{theorem}[`Bounded differences inequality']\label{thm:McD}%
{\normalfont\cite{McDiarmid1989}} 
Let $X=(X_1, \ldots, X_N)$ be a family of independent random 
variables with $X_k$ taking values in a set $\Lambda_k$. 
Assume that the function $f:\prod_{j \in [N]}\Lambda_j \to \RR$ 
satisfies the following \emph{Lipschitz condition}: 
\begin{itemize}
\item[(L)] There are numbers $(c_k)_{k \in [N]}$ such that whenever 
$x,\tilde{x} \in \prod_{j \in [N]}\Lambda_j$ differ only in the 
$k$-th coordinate we have 
\begin{equation}\label{eq:fL}
|f(x)-f(\tilde{x})| \le c_k . 
\end{equation}
\end{itemize}
Let $\mu=\EE f(X)$. For any $t \ge 0$ we have 
\begin{equation}\label{McD:Pr}
\PP(f(X) \ge \mu+t) \le \exp\left(-\frac{2t^2}{\sum_{k \in [N]}c_k^2}\right) . 
\end{equation}
\end{theorem}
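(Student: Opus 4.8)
The plan is to use the classical martingale (Azuma--Hoeffding) method. First I would introduce the Doob martingale associated with $f(X)$: with $\cF_k=\sigma(X_1,\ldots,X_k)$ for $0\le k\le N$ and $Y_k=\EE[f(X)\mid \cF_k]$, one has $Y_0=\mu$, $Y_N=f(X)$, and $(Y_k)_{0\le k\le N}$ is a martingale with respect to $(\cF_k)$. It then suffices to control the increments $\DY_k=Y_k-Y_{k-1}$ and apply the standard exponential bound for sums of bounded martingale differences.

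The key step is to show that, conditionally on $\cF_{k-1}$, the increment $\DY_k$ is a mean-zero random variable confined to an interval of length at most $c_k$. Exploiting independence, write $Y_k=g_k(X_1,\ldots,X_k)$, where $g_k(x_1,\ldots,x_k)=\EE f(x_1,\ldots,x_k,X_{k+1},\ldots,X_N)$ with the expectation taken over the remaining (independent) variables; since $X_k$ is independent of $\cF_{k-1}$, this gives $Y_{k-1}=\EE_{X_k}g_k(X_1,\ldots,X_{k-1},X_k)$. Hence, after fixing $X_1,\ldots,X_{k-1}$, the variable $\DY_k$ is $g_k(X_1,\ldots,X_{k-1},X_k)$ minus its own conditional mean, so $\EE[\DY_k\mid\cF_{k-1}]=0$; moreover, because $g_k$ is an average of values of $f$ that differ only in the $k$-th coordinate, the Lipschitz condition (L) yields $|g_k(x_1,\ldots,x_{k-1},y)-g_k(x_1,\ldots,x_{k-1},y')|\le c_k$, so the conditional range of $\DY_k$ has width at most $c_k$.

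Next I would invoke Hoeffding's lemma: if $Z$ satisfies $\EE Z=0$ and $Z\in[a,b]$ almost surely, then $\EE e^{\lambda Z}\le\exp(\lambda^2(b-a)^2/8)$ for every $\lambda\in\RR$ (this follows from convexity of $z\mapsto e^{\lambda z}$ together with a short optimization, or may simply be cited). Applying it conditionally on $\cF_{k-1}$ gives $\EE[e^{\lambda\DY_k}\mid\cF_{k-1}]\le\exp(\lambda^2 c_k^2/8)$. Unwinding the telescoping identity $Y_N-Y_0=\sum_{k\in[N]}\DY_k$ by conditioning successively, $\EE e^{\lambda(Y_N-Y_0)}=\EE\bigl[e^{\lambda\sum_{k<N}\DY_k}\,\EE[e^{\lambda\DY_N}\mid\cF_{N-1}]\bigr]\le\exp(\lambda^2 c_N^2/8)\,\EE e^{\lambda\sum_{k<N}\DY_k}$, and iterating yields $\EE e^{\lambda(f(X)-\mu)}\le\exp\bigl(\lambda^2\sum_{k\in[N]}c_k^2/8\bigr)$.

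Finally, for $\lambda>0$ the Markov (Chernoff) bound gives $\PP(f(X)\ge\mu+t)\le e^{-\lambda t}\EE e^{\lambda(f(X)-\mu)}\le\exp\bigl(-\lambda t+\lambda^2\sum_{k}c_k^2/8\bigr)$, and optimizing by taking $\lambda=4t/\sum_{k}c_k^2$ produces the claimed bound $\exp\bigl(-2t^2/\sum_{k}c_k^2\bigr)$. The only genuine subtlety — and thus the step I would treat most carefully — is the conditional boundedness and mean-zero property of $\DY_k$ in the second paragraph, as this is exactly where independence of the $X_k$ and the worst-case nature of (L) enter; the rest is the routine Azuma--Hoeffding machinery.
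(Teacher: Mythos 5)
Your proposal is correct and follows essentially the same route as the paper: the paper also uses the Doob martingale $Y_k=\EE(f(X)\mid\cF_k)$, bounds the conditional range of each increment by $c_k$ via independence and (L) (its $\max_{a,b}|\DY_k(a,b)|\le c_k$ together with the $\cF_{k-1}$-measurable bounds $L_k\le\DY_k\le U_k$), and then applies Hoeffding's lemma with the $(U_k-L_k)^2/8$ factor inside its martingale Lemma~\ref{lem:MAH}, which is just the stopped-supermartingale packaging of your direct iteration of conditional moment generating function bounds followed by the Chernoff optimization $\lambda=4t/\sum_k c_k^2$. No gaps; the step you flag as the only subtlety (conditional mean-zero and range $\le c_k$ of $\DY_k$) is handled correctly.
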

While the simplicity of (L) makes this inequality very intuitive and 
easy to apply, its perhaps main drawback is that it considers 
\emph{worst case} changes. 
In particular, the resulting concentration bounds are rather weak (or 
even trivial) in situations where the worst case $c_k$ are much 
larger than the \emph{typical} changes. 
A standard example is $f(X)$ counting the number of triangles in the 
binomial random graph $G_{n,p}$: since every pair of vertices has up 
to $n-2$ common neighbours the worst case is $c_k=\Theta(n)$, which 
is much larger than we expect from the $\Theta(np^2)$ common 
neighbours we usually have for $p \geq n^{-1/2+\eps}$. 
In fact, here Theorem~\ref{thm:McD} only gives trivial estimates for 
$p =O(n^{-1/3})$, but it seems plausible that concentration should 
hold in such applications where the typical changes are much smaller 
than the worst case ones.

This motivated a line of research~\cite{KimVu2000,SchudySviridenko2012b,SchudySviridenko2012,Vu2000,Vu2002} 
which focused on tail inequalities of the form 
$\PP(|f(X)-\mu| \geq t) \leq e^{-g(f,X,t)}$ in situations where 
(intuitively speaking) the \emph{average} Lipschitz coefficients 
$c_k$ are small. Pioneered by Kim and Vu~\cite{KimVu2000}, such 
results usually require $f(X)$ to have a special structure (a 
polynomial of independent random variables of a certain type), 
reducing their range of applications compared to \eqref{McD:Pr}. 
Furthermore, the assumptions of such techniques are much more 
involved (and harder to check) than the simple Lipschitz 
condition (L).

In contrast, much less research has been devoted to developing 
easy-to-use tools for proving concentration results in such 
situations. The Hoeffding--Azuma inequality~\cite{Hoeffding1963,Azuma1967} 
implies, for example, that \eqref{McD:Pr} essentially remains true if 
we relax \eqref{eq:fL} to worst case conditional expected changes: 
\begin{equation}\label{eq:fLE} 
|\EE(f(X) \mid X_1, \ldots, X_k)-\EE(f(X) \mid X_1, \ldots, X_{k-1})| \leq c_k . 
\end{equation}
While this might be useful in certain textbook examples, it typically 
has two main drawbacks in involved combinatorial applications: 
(a) conditional expectations are usually difficult to calculate and 
(b) it often yields no substantial improvement (for, say, $k \geq N/2$ 
the worst case in \eqref{eq:fLE} over all choices of $X_1, \ldots, X_{k}$ 
is often comparable to~\eqref{eq:fL}). 
There are also some approaches which allow \eqref{eq:fLE} to be violated 
occasionally~\cite{ChalkerGodboleHitczenkoRadcliffRuehr1999,ChungLu2006,Kim1995a,McDiarmid1998,ShamirSpencer1987}, 
but these usually require knowledge about conditional probability 
distributions, making them particularly difficult to apply when 
$f(X)$ is defined in an indirect or complicated way.

\subsection{Typical bounded differences inequality}\label{sec:TBDI}
In this paper we develop a variant of the bounded differences 
inequality which can be used to establish concentration of functions 
$f(X)$ where (i) the \emph{typical} changes are small although 
(ii) the worst case changes might be very large. 
One key aspect of this inequality is that it relies on a simple and 
attractive condition that (a) is easy to check and (b) coincides with 
heuristic considerations why concentration should hold. 
Indeed, given a `good' event $\Gamma$ that holds with very high 
probability, we essentially relax the Lipschitz condition (L) to 
situations where $\Gamma$ occurs. 
More precisely, for the sake of proving concentration the following 
inequality usually allows us to restrict our attention to such 
\emph{typical} changes, which are often much smaller than the worst 
case ones. 
\begin{theorem}[`Typical bounded differences inequality']\label{thm:McDExt}
Let $X=(X_1, \ldots, X_N)$ be a family of independent random 
variables with $X_k$ taking values in a set $\Lambda_k$. 
Let $\Gamma \subseteq \prod_{j \in [N]}\Lambda_j$ be an event and 
assume that the function $f:\prod_{j \in [N]}\Lambda_j \to \RR$ 
satisfies the following \emph{typical Lipschitz condition}: 
\begin{itemize}
\item[(TL)] There are numbers $(c_k)_{k \in [N]}$ and 
$(d_k)_{k \in [N]}$ with $c_k \le d_k$ such that whenever 
$x,\tilde{x} \in \prod_{j \in [N]}\Lambda_j$ differ only in the 
$k$-th coordinate we have 
\begin{equation}\label{eq:fTL}
|f(x)-f(\tilde{x})| \; \le \; \begin{cases}
		c_k & \;\text{if $x \in \Gamma$,}\\ 
		d_k & \;\text{otherwise.} 
\end{cases}
\end{equation}
\end{itemize} 
For any numbers $(\gamma_k)_{k \in [N]}$ with $\gamma_k \in (0,1]$ 
there is an event $\cB=\cB(\Gamma,(\gamma_k)_{k \in [N]})$ 
satisfying 
\begin{equation}\label{McDExt:PrB}
\PP(\cB) \leq \sum_{k \in [N]} \gamma_k^{-1} \cdot \PP(X \notin \Gamma) 
\quad \text{and} \quad \neg\cB \subseteq \Gamma , 
\end{equation}
such that for $\mu=\EE f(X)$, $e_k=\gamma_k (d_k-c_k)$ and any 
$t \ge 0$ we have 
\begin{equation}\label{McDExt:Pr}
\PP(f(X) \ge \mu+t \text{ and } \neg \cB) \le \exp\left(-\frac{t^2}{2\sum_{k \in [N]}(c_k+e_k)^2}\right) . 
\end{equation}
\end{theorem}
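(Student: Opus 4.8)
The plan is to run the standard Hoeffding--Azuma martingale argument, but applied to a \emph{modified} function $g(X)$ that agrees with $f(X)$ on the good event and is Lipschitz with the small constants $c_k+e_k$ everywhere. First I would set up the Doob martingale $Z_i = \EE(g(X) \mid X_1, \ldots, X_i)$ for an appropriate bounded function $g$, and control the martingale differences $\DY_i = Z_i - Z_{i-1}$ by a quantity of order $c_i + e_i$; then \eqref{McDExt:Pr} follows from the usual exponential supermartingale estimate (the constant $2$ in the denominator, rather than McDiarmid's sharper constant, signals that a slightly lossy Hoeffding-type bound on $\EE(e^{\lambda \DY_i}\mid X_1,\dots,X_{i-1})$ is being used, presumably because the conditional \emph{mean} of $\DY_i$ is not exactly zero after the modification).

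The heart of the matter is the construction of $g$ and of the bad event $\cB$. The natural idea: for each coordinate $k$, the Lipschitz violation "$f$ can jump by up to $d_k$ rather than $c_k$ when resampling $X_k$" is only a problem when the current configuration is atypical. So I would define, for each $k$, a set of \emph{locally bad} configurations — roughly those $x$ for which there exists $\tilde x$ differing only in coordinate $k$ with $x\notin\Gamma$ (so the jump could exceed $c_k$), and then declare $\cB$ to be the event that, at the step where $X_k$ is revealed, we land in such a locally bad configuration. The weight $\gamma_k^{-1}$ in \eqref{McDExt:PrB} strongly suggests a Markov-type argument: one averages $\indic{X\notin\Gamma}$ over the resampling of $X_k$ to get a function of $X_1,\dots,X_{k-1}$ (or of $X$ with $X_k$ integrated out), and \emph{then} declares coordinate $k$ bad when that conditional probability exceeds $\gamma_k$ times its typical size; Markov's inequality then gives $\PP(\text{coord.\ }k\text{ bad})\le \gamma_k^{-1}\PP(X\notin\Gamma)$, and a union bound over $k$ yields \eqref{McDExt:PrB}. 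On the complement $\neg\cB$ one has, for every $k$, a \emph{quantitative} bound: resampling $X_k$ keeps us in $\Gamma$ except on a set of conditional probability at most $\gamma_k\cdot(\text{something})$, so the conditional expectation of $f$ changes by at most $c_k + \gamma_k(d_k-c_k) = c_k+e_k$. Making this precise — choosing $g$ so that it is genuinely $(c_k+e_k)$-Lipschitz in coordinate $k$ \emph{pointwise} (not just in expectation), while still equal to $f$ off $\cB$ — is where I expect the real work to be, and the likely device is to truncate or average $f$ near the bad configurations, e.g.\ replace $f$ by its conditional expectation given that we stay in a "$\gamma_k$-safe" region.

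Concretely, the steps I would carry out, in order: (1) reduce to $t>0$ and fix the martingale filtration $\cF_i=\sigma(X_1,\dots,X_i)$; (2) for each $k$, define the "conditional atypicality" $p_k = p_k(X_1,\dots,X_{k-1}) = \PP(X\notin\Gamma\mid\cF_{k-1})$ (or a variant with $X_k$ also resampled), let coordinate $k$ be \emph{flagged} if $p_k>\gamma_k$, and set $\cB=\bigcup_k\{\text{coord.\ }k\text{ flagged}\}$; (3) verify $\PP(\cB)\le\sum_k\PP(p_k>\gamma_k)\le\sum_k\gamma_k^{-1}\EE p_k=\sum_k\gamma_k^{-1}\PP(X\notin\Gamma)$ and check $\neg\cB\subseteq\Gamma$ (this should be essentially immediate, taking $k$ such that $\gamma_k\le 1$ forces $p_N$ or the relevant quantity to be $0$, or by folding the event $\{X\notin\Gamma\}$ itself into $\cB$); (4) define $g(X) = f(X)\indic{\neg\cB}$ plus a suitable correction so that $g$ is bounded and $(c_k+e_k)$-Lipschitz in coordinate $k$ — here one uses that on non-flagged coordinates the "expensive" jump $d_k$ only occurs with conditional probability $\le\gamma_k$, so in the Doob-martingale differences its contribution is damped to $\gamma_k(d_k-c_k)=e_k$; (5) bound $|\DY_i|$, apply the Hoeffding lemma conditionally to get $\EE(e^{\lambda\DY_i}\mid\cF_{i-1})\le e^{\lambda^2(c_i+e_i)^2/2}$, multiply over $i$, apply Markov to $e^{\lambda(g(X)-\mu_g)}$, and optimize $\lambda$; (6) finally translate back from $g$ to $f$ on $\neg\cB$, noting $\mu_g$ differs from $\mu$ by a controllable amount or, better, arranging the construction of $g$ so that $\EE g = \mu$ is not needed and one works directly with $\PP(f\ge\mu+t,\neg\cB)=\PP(g\ge\mu+t,\neg\cB)\le\PP(g\ge\mu+t)$.

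The main obstacle, I expect, is step (4): one must produce a single function $g$ that is simultaneously (i) equal to $f$ on $\neg\cB$, (ii) pointwise Lipschitz with the \emph{small} constants in \emph{every} coordinate, and (iii) has mean not exceeding $\mu$ (or with a cheaply controlled mean). The tension is that "being bad in coordinate $k$" is not a coordinatewise-decomposable event, so naively zeroing out $f$ on $\cB$ destroys the Lipschitz property across \emph{other} coordinates. The resolution is presumably to define $g$ step by step along the martingale — i.e.\ directly specify the increments $\DY_i$ by a stopped/truncated version of $\EE(f\mid\cF_i)$ — rather than to write down $g$ as a function of $X$ in closed form; this is the delicate bookkeeping that the formal proof will have to handle carefully.
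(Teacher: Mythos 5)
Your plan, in the form it reaches by your last paragraph, is essentially the paper's proof, and the detour through a modified function $g$ is unnecessary. The paper defines the $\cF_{k-1}$-measurable events $\cB_{k-1}=\{\PP(X\notin\Gamma\mid\cF_{k-1})>\gamma_k\}$, sets $\cB=\neg\Gamma\cup\bigcup_{k}\cB_{k-1}$, and gets \eqref{McDExt:PrB} by Markov plus a union bound --- with one small piece of bookkeeping your step (3) glosses over: folding $\neg\Gamma$ into $\cB$ costs nothing extra only because $\PP(X\notin\Gamma\mid\cF_0)$ is deterministic and $\gamma_1\le 1$, so $\PP(\neg\Gamma\cup\cB_0)\le\gamma_1^{-1}\PP(X\notin\Gamma)$. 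It then simply stops the Doob martingale $Y_k=\EE(f(X)\mid\cF_k)$ at the first time some $\cB_k$ holds (call this time $T$, and set $M_k=Y_{k\wedge T}$); no surrogate function with pointwise Lipschitz constants $c_k+e_k$ is ever built, and indeed none is needed, because the martingale inequality used (Lemma~\ref{lem:MAH}, which allows $\cF_{k-1}$-measurable bounds on increments) only requires the \emph{increments} to be small. Verifying this is a short computation, not delicate bookkeeping: on $\{T\ge k\}$, condition on $X_k=a$ versus $X_k=b$, write the difference of conditional expectations as an average over the later (independent) coordinates of one-coordinate differences of $f$, split according to $X\in\Gamma$ or not and apply \eqref{eq:fTL} --- the one-sided form of (TL), requiring only $x\in\Gamma$, is exactly what makes this split legitimate after conditioning on $X_k=a$ alone --- to get $|\DY_k(a,b)|\le c_k+(d_k-c_k)\PP(X\notin\Gamma\mid\cF_{k-1},X_k=a)$; averaging over $X_k$ and using that $\cB_{k-1}$ fails gives $|\DY_k|\le c_k+\gamma_k(d_k-c_k)$. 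Since $T=N$ on $\neg\cB$, the event $\{f(X)\ge\mu+t\}\cap\neg\cB$ lies in $\{M_N\ge M_0+t\}$, and Lemma~\ref{lem:MAH} with $S=\sum_k\bigl(2(c_k+e_k)\bigr)^2$ gives \eqref{McDExt:Pr}. One minor correction to your heuristics: the stopped increments have conditional mean exactly zero, so the constant in \eqref{McDExt:Pr} is not due to any drift introduced by a modification; the factor $4$ lost relative to \eqref{McD:Pr} comes from bounding only $|\DY_k|\le c_k+e_k$ (an interval of length $2(c_k+e_k)$), and is recovered in Remark~\ref{rem:McDExt} when $|\Lambda_k|=2$.
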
 
\begin{remark}\label{rem:McDExt}
If each $X_k$ takes only two values (i.e., when $|\Lambda_k|=2$) the 
exponent in \eqref{McDExt:Pr} may be multiplied by factor of $4$, 
analogous to the standard bound \eqref{McD:Pr}. 
\end{remark}
As before, this inequality is only stated for the upper tail since an 
application to $-f(X)$ yields the same estimate for 
$\PP(f(X) \le \mu-t)$. 
One key property of the `bad' event $\cB$ is that it does \emph{not} 
depend on the function $f(X)$, so that \eqref{McDExt:Pr} can be used 
as a tail estimate in union bound arguments. 
We expect that the \emph{typical} changes $c_k$ are usually 
\emph{substantially smaller} than the worst case $d_k$, and the 
`compensation factor' $\gamma_k$ is supposed to milden the effects of 
the $d_k$ in \eqref{McDExt:Pr}. 
Indeed, in the typical application $\gamma_k$ will be very small 
(this choice is possible if $\Gamma$ holds with very high 
probability), so that we can think of $e_k=\gamma_k (d_k-c_k)$ as a 
negligible `error term'. 
With this in mind, perhaps the most important aspect of 
Theorem~\ref{thm:McDExt} is that it may still yield concentration in 
situations where Theorem~\ref{thm:McD} only gives trivial bounds due 
to very large worst case Lipschitz coefficients.

To illustrate the ease of application of Theorem~\ref{thm:McDExt}, 
consider again the example where $f(X)$ counts the number of 
triangles in $G_{n,p}$. Define $\Gamma$ as the event that every pair 
of vertices has at most $\Delta=\max\{2np^2,n^{\eps}\}$ common 
neighbours, which fails with probability at most 
$e^{-\Omega(n^\eps)}$ by standard Chernoff bounds. It is 
straightforward to see that in this case (TL) holds with, say, 
$c_k=\Delta$ and $d_k=n$. Setting $\gamma_k = n^{-1}$ we thus have 
$e_k =o(c_k)$ and $\PP(\cB) \leq e^{-\Omega(n^\eps)}$, which means 
that both terms are negligible for the sake of establishing 
concentration. It follows that for $p \geq n^{-2/3+\eps}$ the typical 
bounded differences inequality (Theorem~\ref{thm:McDExt}) yields 
tight concentration of the number of triangles (with 
$\PP\left(f(X) \not\in (1 \pm n^{-\eps}) \mu\right) \leq e^{-\Omega(n^\eps)}$, 
say), whereas Theorem~\ref{thm:McD} already fails for $p = n^{-1/3}$. 
Note that we picked all parameters in a uniform way, setting $c_k=C$, 
$d_k=D$ and $\gamma_k=\gamma$; this might be convenient in many 
applications (where $\gamma \approx C/D$ should often suffice).

\subsubsection{Improvement for Bernoulli random variables}\label{sec:01TL}
If the underlying probability space is generated by independent 
Bernoulli random variables we establish much stronger estimates. 
For example, in the common situation where the success probabilities 
are all equal to $p$ (as in $G_{n,p}$) the following natural 
extension of Theorem~\ref{thm:McDExt} essentially allows us to 
multiply the denominator of~\eqref{McDExt:Pr} with an extra factor of 
$p$ (on an intuitive level one can perhaps think of this as applying 
Theorem~\ref{thm:McDExt} after conditioning on $\Theta(Np)$ variables 
being `relevant'). 
\begin{theorem}[`Typical bounded differences inequality for $0$-$1$ variables']\label{thm:McDExtV}
Let $X=(X_1, \ldots, X_N)$ be a family of independent random 
variables with $X_k \in \{0,1\}$ and $p_k=\PP(X_k=1)$. 
Let $\Gamma \subseteq \{0,1\}^{N} $ be an event and assume that the 
function $f:\{0,1\}^{N} \to \RR$ satisfies the typical Lipschitz 
condition (TL) with $\Lambda_k=\{0,1\}$. 
For any numbers $(\gamma_k)_{k \in [N]}$ with $\gamma_k \in (0,1]$ 
there is an event $\cB=\cB(\Gamma,(\gamma_k)_{k \in [N]})$ satisfying 
\eqref{McDExt:PrB} such that for $\mu=\EE f(X)$ and any $t \ge 0$ we 
have 
\begin{equation}\label{McDExtV:Pr}
\PP(f(X) \ge \mu+t \text{ and } \neg \cB) \le \exp\left(-\frac{t^2}{2\sum_{k \in [N]}(1-p_k)p_k(c_k+e_k)^2+2 C t/3 }\right) , 
\end{equation}
where $e_k=\gamma_k (d_k-c_k)$ and $C = \max_{k \in [N]} (c_k+e_k)$. 
\end{theorem}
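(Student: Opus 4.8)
The plan is to mimic the martingale argument that must underlie Theorem~\ref{thm:McDExt}, but replace the Hoeffding-type bound on the moment generating function of each martingale increment with a Bernstein-type bound that exploits the low variance coming from $p_k$ being (possibly) small. First I would introduce the Doob martingale $Y_k = \EE(g(X) \mid X_1, \ldots, X_k)$, where $g$ is the modified function obtained by ``stopping'' or ``freezing'' $f$ once the bad event $\cB$ is witnessed — i.e.\ the same device that produces the event $\cB$ with $\PP(\cB) \le \sum_k \gamma_k^{-1}\PP(X \notin \Gamma)$ and $\neg\cB \subseteq \Gamma$ in the proof of Theorem~\ref{thm:McDExt}. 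The construction of $\cB$ and $g$ should be verbatim the same as in that proof (it depends only on $\Gamma$ and the $\gamma_k$, not on anything special about $0$-$1$ variables), so I would simply invoke it. The key point inherited from that construction is that the increment $\DY_k = Y_k - Y_{k-1}$ satisfies a one-variable Lipschitz-type bound: conditional on $X_1, \ldots, X_{k-1}$, changing $X_k$ changes $g$ by at most $a_k := c_k + e_k$, and moreover $\DY_k = 0$ unless $X_k$ differs from some reference value — this last fact is what lets us bring in the factor $p_k$.

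The heart of the matter is the following estimate for the conditional moment generating function: for $\lambda > 0$,
\begin{equation}\label{eq:mgf-bound}
\EE\bigl(e^{\lambda \DY_k} \mid X_1, \ldots, X_{k-1}\bigr) \le \exp\left( (1-p_k)p_k a_k^2 \bigl(e^{\lambda a_k} - 1 - \lambda a_k\bigr)/a_k^2 \right),
\end{equation}
or more precisely a bound of the shape $\exp\bigl(\psi(\lambda) (1-p_k)p_k a_k^2\bigr)$ with $\psi(\lambda) = (e^{\lambda a_k}-1-\lambda a_k)/a_k^2$. Here I would argue as follows: since $X_k \in \{0,1\}$, conditionally on the past the increment $\DY_k$ takes at most two values, say $v_1$ (when $X_k=1$, probability $p_k$) and $v_0$ (when $X_k=0$, probability $1-p_k$), with $p_k v_1 + (1-p_k) v_0 = 0$ and $|v_1 - v_0| \le a_k$. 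A two-point random variable with mean zero, range at most $a_k$, and ``weights'' $p_k, 1-p_k$ has variance at most $(1-p_k)p_k a_k^2$; combining this with the standard inequality $e^x \le 1 + x + x^2 \phi(a_k \lambda)$ valid for $x \le a_k\lambda$ (where $\phi$ is increasing) — exactly the computation behind Bernstein's / Freedman's inequality — yields~\eqref{eq:mgf-bound}. I expect this two-point variance computation and the choice of the right elementary inequality to be the main obstacle: one must be careful that $v_1$ and $v_0$ can both be nonzero (unlike in a naive ``one side is zero'' picture), and that the bound is uniform over the conditioning.

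With~\eqref{eq:mgf-bound} in hand, the remainder is routine. Multiplying the conditional bounds telescopically gives
\begin{equation}
\EE\, e^{\lambda Y_N} \le \exp\left( \lambda \mu + \psi_\lambda \sum_{k \in [N]} (1-p_k)p_k a_k^2 \right),
\end{equation}
where $\psi_\lambda = \sup_k (e^{\lambda a_k}-1-\lambda a_k)/a_k^2 \le (e^{\lambda C}-1-\lambda C)/C^2$ since $x \mapsto (e^x-1-x)/x^2$ is increasing and $a_k \le C$. Then Markov's inequality gives $\PP(Y_N \ge \mu + t) \le \exp(-\lambda t + \psi_\lambda V)$ with $V = \sum_k (1-p_k)p_k a_k^2$, and optimizing $\lambda$ via the standard Bernstein trick (take $\lambda = \tfrac{1}{C}\log(1 + Ct/V)$, or just quote the inequality $\inf_{\lambda>0}(-\lambda t + \psi_\lambda V) \le -t^2/(2V + 2Ct/3)$) produces the bound~\eqref{McDExtV:Pr}. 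Finally, since $\neg\cB \subseteq \Gamma$ and $g = f$ on $\neg\cB$, the event $\{f(X) \ge \mu + t\} \cap \neg\cB$ is contained in $\{Y_N \ge \mu + t\}$ up to the usual adjustment that $\mu = \EE f(X)$ versus $\EE g(X)$ differ negligibly (handled exactly as in Theorem~\ref{thm:McDExt}), which completes the proof.
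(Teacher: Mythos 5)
Your proposal is correct and follows essentially the same route as the paper: there the stopped Doob martingale from the proof of Theorem~\ref{thm:McDExt} is reused, the conditional variance of each increment is bounded by $(1-p_k)p_k(c_k+e_k)^2$ via exactly your two-point computation, and the conclusion comes from a Freedman/Bernstein-type martingale inequality (Lemma~\ref{lem:MFr}, proved with the same mgf bound $e^x\le 1+x+x^2g(\lambda C)$ and the same choice $\lambda=\log(1+Ct/V)/C$), yielding \eqref{McDExtV:Pr}. The one detail you gloss is that under the one-sided condition (TL) the gap bound $|v_1-v_0|\le c_k+e_k$ is obtained by applying \eqref{eq:fTL} from the side $\alpha\in\{0,1\}$ for which $\PP(X\notin\Gamma\mid\cF_{k-1},X_k=\alpha)\le\gamma_k$ (such $\alpha$ exists on $\{T\ge k\}$ since the average over $X_k$ is at most $\gamma_k$); this is the paper's \eqref{eq:gammaK}--\eqref{eq:MBC3} and is a short fix rather than a different argument, and likewise no ``negligible adjustment'' of the mean is needed since the stopped martingale starts exactly at $\mu$.
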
 
\begin{remark}\label{rem:MON}
If $f(X)$ and $\Gamma$ are either both monotone increasing or 
decreasing we have 
\begin{equation}\label{McDExtVMon:Pr}
\PP(f(X) \ge \mu+t) \le \frac{\PP(f(X) \ge \mu+t \text{ and } \neg \cB)}{1-\PP(\cB)} . 
\end{equation}
\end{remark}
In typical applications of this inequality we hope to be able to 
ignore the `error term' $2 C t/3$ (and select $\gamma_k$ such that 
$c_k+e_k\approx c_k$, as before). In this case \eqref{McDExtV:Pr} is 
close $e^{-t^2/(2\sum p_k c_k^2)}$, which for $p_k=o(1)$ is a 
significant improvement of the corresponding $e^{-2t^2/(\sum c_k^2)}$ 
from Remark~\ref{rem:McDExt}. 
For example, in the case of triangles in $G_{n,p}$ this allows us to 
extend the concentration result of the previous section to edge 
probabilities satisfying $p \ge n^{-4/5+\eps}$. 
In fact, the estimates implied by \eqref{McDExtVMon:Pr} are sometimes 
comparable to those of Janson's inequality~\cite{Janson1990,RiordanWarnke2012J}, 
see Section~\ref{sec:Janson}.

Ignoring the `good' event $\Gamma$ in Theorem~\ref{thm:McDExtV} we 
also obtain a strengthening of Theorem~\ref{thm:McD}. Since this 
natural variant of the bounded differences inequality does not seem 
to be as widely known, we explicitly state it for ease of reference 
(if each $(1-p_k)p_k$ is weakened to $\max_{i}\min\{1-p_i,p_i\}$ then 
\eqref{McDExtVCor:Pr} follows from Theorem~3.9 in McDiarmid's 
survey~\cite{McDiarmid1998}; Alon, Kim and Spencer~\cite{AlonKimSpencer1997} 
also proved a comparable inequality that applies to small values of~$t$ 
only: for those the contribution of $Ct$ to the denominator 
of~\eqref{McDExtVCor:Pr} is negligible). 
\begin{corollary}[`Bounded differences inequality for $0$-$1$ variables']\label{Cor:McDExtVCor}
Let $X=(X_1, \ldots, X_N)$ be a family of independent random 
variables with $X_k \in \{0,1\}$ and $p_k=\PP(X_k=1)$. 
Assume that the function $f:\{0,1\}^{N} \to \RR$ satisfies the 
Lipschitz condition (L) with $\Lambda_k=\{0,1\}$. 
Let $\mu=\EE f(X)$ and $C=\max_{k \in [N]}c_k$. For any $t \ge 0$ we 
have 
\begin{equation}\label{McDExtVCor:Pr}
\PP(f(X) \ge \mu+t) \le \exp\left(-\frac{t^2}{2\sum_{k \in [N]}(1-p_k)p_kc_k^2+2 C t/3 }\right) . 
\end{equation}
\end{corollary}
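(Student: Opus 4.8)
I would prove Corollary~\ref{Cor:McDExtVCor} first as a one-line specialization of Theorem~\ref{thm:McDExtV}. Apply that theorem with $\Gamma = \{0,1\}^N$, i.e.\ with the trivial event that is always satisfied, so that $\PP(X \notin \Gamma) = 0$ and \eqref{McDExt:PrB} forces $\PP(\cB) = 0$; hence we may drop ``$\neg\cB$'' from \eqref{McDExtV:Pr}. Since $X \in \Gamma$ always holds, condition (TL) with $d_k = c_k$ is exactly the Lipschitz condition (L), and then $e_k = \gamma_k(d_k - c_k) = 0$ for every $k$ and every choice of $\gamma_k \in (0,1]$, so $C = \max_{k}(c_k + e_k) = \max_k c_k$ as in the statement. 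Substituting into \eqref{McDExtV:Pr} then gives precisely \eqref{McDExtVCor:Pr}.

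Because Corollary~\ref{Cor:McDExtVCor} is conceptually prior to Theorem~\ref{thm:McDExtV} (and presumably proved ahead of it), I would also want a self-contained argument, which is the classical martingale/Bernstein proof. Let $\cF_i = \sigma(X_1, \ldots, X_i)$ and let $Y_i = \EE(f(X) \mid \cF_i)$ be the associated Doob martingale, so $Y_0 = \mu$, $Y_N = f(X)$, and set $\DM_i = Y_i - Y_{i-1}$. Writing $g_i(x) = \EE\bigl(f(X) \mid X_1, \ldots, X_{i-1}, X_i = x\bigr)$ for $x \in \{0,1\}$ and $\delta_i = g_i(1) - g_i(0)$, which is $\cF_{i-1}$-measurable, independence of the $X_j$ gives $Y_i = g_i(X_i)$ and $Y_{i-1} = p_i g_i(1) + (1 - p_i) g_i(0)$, hence $\DM_i = (X_i - p_i)\,\delta_i$. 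The Lipschitz condition (L) bounds the integrand defining $\delta_i$ by $c_i$ in absolute value, and averaging preserves this, so $|\delta_i| \le c_i$. It follows that $|\DM_i| \le \max\{p_i, 1 - p_i\}\, c_i \le c_i \le C$ and $\EE(\DM_i^2 \mid \cF_{i-1}) = (1 - p_i)p_i\,\delta_i^2 \le (1 - p_i)p_i c_i^2$, almost surely.

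It then remains to run the standard exponential-moment machinery. For $\lambda > 0$, the a.s.\ bound $|\DM_i| \le C$ together with $\EE(\DM_i \mid \cF_{i-1}) = 0$ and a term-by-term estimate of the exponential series gives $\EE(e^{\lambda \DM_i} \mid \cF_{i-1}) \le \exp\bigl((1-p_i)p_i c_i^2 \cdot \psi(\lambda)\bigr)$, where $\psi(\lambda) = (e^{\lambda C} - 1 - \lambda C)/C^2$; telescoping along the martingale (taking conditional expectations from the inside out) yields $\EE e^{\lambda(f(X) - \mu)} \le \exp\bigl(V\psi(\lambda)\bigr)$ with $V = \sum_{k \in [N]}(1-p_k)p_k c_k^2$. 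Markov's inequality gives $\PP(f(X) \ge \mu + t) \le \exp\bigl(-\lambda t + V\psi(\lambda)\bigr)$ for all $\lambda > 0$, and optimizing (taking $\lambda = C^{-1}\log(1 + Ct/V)$) together with the elementary inequality $(1 + u)\log(1 + u) - u \ge u^2/(2 + 2u/3)$ for $u \ge 0$, applied with $u = Ct/V$, produces exactly \eqref{McDExtVCor:Pr}.

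I do not expect a genuine obstacle here: via Theorem~\ref{thm:McDExtV} the result is immediate, and the direct route is textbook. The only points that need a little care in the direct argument are the identification $\DM_i = (X_i - p_i)\delta_i$ with the clean inheritance $|\delta_i| \le c_i$ from (L) — so that both the pointwise bound and the conditional-variance bound involve the \emph{same} constant $c_i$ — and the final choice of $\lambda$, where the sharp form of the numerical inequality is needed to land on the denominator $2(V + Ct/3)$ rather than a weaker Bennett-type expression.
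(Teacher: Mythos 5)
Your first paragraph is exactly the paper's proof: the corollary is deduced by applying Theorem~\ref{thm:McDExtV} with $\Gamma=\{0,1\}^N$ and $d_k=c_k$, so that $e_k=0$ and $\PP(\cB)=0$, and there is no circularity since the corollary appears after (and is stated as a consequence of) that theorem. Your supplementary self-contained argument is also correct, but it essentially re-derives in this special case the machinery the paper already uses to prove Theorem~\ref{thm:McDExtV}, namely the Doob martingale setup with $\Var(\DM_k\mid\cF_{k-1})\le (1-p_k)p_kc_k^2$ fed into the Bernstein-type martingale bound of Lemma~\ref{lem:MFr} (via Steiger's inequality and $\phi(x)\ge x^2/(2+2x/3)$).
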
 
\begin{proof}
Apply Theorem~\ref{thm:McDExtV} with $\Gamma=\{0,1\}^{N}$ and $d_k=c_k$. 
\end{proof}
This extends Bernstein's inequality (a strengthening of the Chernoff 
bounds for small deviations, see e.g.\ Remark~2.9 in \cite{JLR}), 
which applies to sums of independent random variables. One key aspect 
of~\eqref{McDExtVCor:Pr} is that it is almost tight when 
$f(X)=\sum_k X_k$, in which case $V=\Var f(X) = \sum_k p_k(1-p_k)$ 
and $c_k=1$. Indeed, the estimate of Corollary~\ref{Cor:McDExtVCor} 
is then close to $e^{-t^2/(2V)}$ for $t$ not too large, which is 
exactly the tail behaviour predicted by the central limit theorem. 
\begin{remark}\label{rem:McDExtB}
Our arguments in fact yield a slightly stronger form of 
\eqref{McDExtV:Pr}--\eqref{McDExtVCor:Pr}, analogous to Bennet's 
sharpening of the Chernoff bounds (see e.g.\ Remark~2.9 in \cite{JLR}). 
Indeed, for $\phi(x)=(1+x)\log(1+x)-x$ we can improve terms of the 
form $e^{-t^2/(2V+2Ct/3)}$ to $e^{-V/C^2 \cdot \phi(Ct/V)}$, where 
$V$ equals $\sum_k (1-p_k)p_k(c_k+e_k)^2$ and $\sum_k (1-p_k)p_kc_k^2$ 
in \eqref{McDExtV:Pr} and \eqref{McDExtVCor:Pr}. 
For $t=\omega(V/C)$ these refined estimates sharpen the exponents 
from order $\Theta(t/C)$ to $\Theta(t/C \cdot \log(Ct/V))$, i.e., 
yield a logarithmic improvement. 
\end{remark}
\begin{remark}\label{rem:McDExtV}
Theorem~\ref{thm:McDExtV} and Corollary~\ref{Cor:McDExtVCor} extend 
with minor modifications to the case where each $X_k$ takes values in 
a set $\Lambda_k$ and satisfies $\max_{\eta \in \Lambda_k} \PP(X_k=\eta) \ge 1-p_k$. 
Indeed, \eqref{McDExtV:Pr} and \eqref{McDExtVCor:Pr} both hold after 
deleting $(1-p_k)$ and replacing $c_k+e_k$ with $c_k+e_k \cdot (1-p_k)^{-1}$. 
\end{remark}

\subsubsection{Two-sided Lipschitz conditions}\label{sec:TTL}
The typical Lipschitz condition (TL) is `one-sided': 
$|f(x)-f(\tilde{x})| \leq c_k$ is supposed to hold if $x \in \Gamma$. 
This keeps the formulas simple, but in many applications it is easier 
(and perhaps more natural) to verify a `two-sided' condition where 
$x,\tilde{x} \in \Gamma$ holds. The following theorem states that we 
may use a two-sided variant of (TL) at the cost of slightly 
increasing the `error term' $e_k$. 
\begin{theorem}[`Two-sided typical Lipschitz condition']\label{thm:McDfTLP}
Theorems~\ref{thm:McDExt}, \ref{thm:McDExtV} and Remarks~\ref{rem:McDExt}, \ref{rem:McDExtB}, \ref{rem:McDExtV} 
remain valid with $e_k=2\gamma_k (d_k-c_k) q_k^{-1}$ and $\min_{\eta \in \Lambda_k} \PP(X_k=\eta) \geq q_k$ 
if the Lipschitz condition \eqref{eq:fTL} of (TL) is replaced by the 
following two-sided variant: 
\begin{equation}\label{eq:fTLP}
|f(x)-f(\tilde{x})| \; \le \; \begin{cases}
		c_k & \;\text{if $x,\tilde{x} \in \Gamma$,}\\ 
		d_k & \;\text{otherwise.} 
\end{cases}
\end{equation} 
\end{theorem}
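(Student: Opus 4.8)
The plan is to reduce the two-sided condition \eqref{eq:fTLP} to the one-sided condition \eqref{eq:fTL} by symmetrizing over the $k$-th coordinate, so that Theorems~\ref{thm:McDExt} and~\ref{thm:McDExtV} (together with the associated remarks) can be applied essentially as black boxes. The key observation is that the one-sided condition fails only when $x \in \Gamma$ but $\tilde x \notin \Gamma$; in that situation we only know $|f(x)-f(\tilde x)| \le d_k$, and we must pay for the difference $d_k - c_k$. However, since $x$ and $\tilde x$ differ only in coordinate $k$, and $\min_{\eta \in \Lambda_k}\PP(X_k = \eta) \ge q_k$, such `bad' resamplings of the $k$-th coordinate have conditional probability at most $q_k^{-1}\PP(X \notin \Gamma \mid X_1,\ldots,X_{k-1},X_{k+1},\ldots,X_N)$-ish; this is exactly the kind of rare-exceptional-change phenomenon that the one-sided machinery is already built to absorb via the compensation factor $\gamma_k$.

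First I would recall that the proofs of Theorems~\ref{thm:McDExt} and~\ref{thm:McDExtV} are martingale arguments in which, at step $k$, one controls the conditional increment $\DY_k = \EE(f(X)\mid X_1,\ldots,X_k) - \EE(f(X)\mid X_1,\ldots,X_{k-1})$ after modifying $f$ on a suitable exceptional set. The cleanest way to prove Theorem~\ref{thm:McDfTLP} is therefore to isolate, from those proofs, the single structural fact actually used: that for the modified function $\tilde f$ one has a pointwise Lipschitz-type bound $|\tilde f(x) - \tilde f(\tilde x)| \le c_k$ whenever $x$ lies outside the bad event $\cB$ and $x,\tilde x$ differ in coordinate $k$. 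Concretely, I would define $\tilde f = f \cdot \indic{x \in \Gamma^+}$ (or a truncation thereof) where $\Gamma^+$ is an appropriate `robustified' version of $\Gamma$ — namely the set of $x$ such that $x$ and \emph{every} coordinate-$k$ neighbour of $x$, for all $k$, lie in $\Gamma$ — and show that $\Gamma \setminus \Gamma^+$ has probability at most $\sum_k q_k^{-1}\PP(X\notin\Gamma)$, by a union bound over $k$ and over the possible values $\eta\in\Lambda_k$, using $\PP(X_k=\eta)\ge q_k$ to convert `there exists a bad neighbour' into a bounded multiple of $\PP(X\notin\Gamma)$.

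Next, for $x \in \Gamma^+$ and any coordinate-$k$ neighbour $\tilde x$, the two-sided condition \eqref{eq:fTLP} gives $|f(x)-f(\tilde x)|\le c_k$ directly, since both $x,\tilde x \in \Gamma$; while if $x \notin \Gamma^+$ we still have $|f(x)-f(\tilde x)| \le d_k$ from the `otherwise' branch. Thus $f$ itself satisfies the \emph{one-sided} condition \eqref{eq:fTL} with $\Gamma$ replaced by $\Gamma^+$, the same $c_k$, and the same $d_k$. Applying Theorem~\ref{thm:McDExt} (resp.\ Theorem~\ref{thm:McDExtV}) with the good event $\Gamma^+$ and compensation factors $\gamma_k' := \gamma_k/(2 q_k^{-1})$ — chosen so that the resulting bad-event probability bound $\sum_k (\gamma_k')^{-1}\PP(X\notin\Gamma^+) \le \sum_k (\gamma_k')^{-1}\cdot(\sum_j q_j^{-1})\PP(X\notin\Gamma)$ can be reorganised into the stated form, absorbing constants — produces an error term $e_k' = \gamma_k'(d_k - c_k)$, and unwinding the definition of $\gamma_k'$ recovers $e_k = 2\gamma_k(d_k-c_k)q_k^{-1}$ as claimed. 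The same substitution carries through Remarks~\ref{rem:McDExt}, \ref{rem:McDExtB} and~\ref{rem:McDExtV} verbatim, since none of them touches the structure of $\Gamma$.

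I expect the main obstacle to be bookkeeping the bad-event probability cleanly: the naive bound on $\PP(X \notin \Gamma^+)$ picks up a factor of roughly $\sum_j q_j^{-1}$ (from the union over coordinates in the definition of $\Gamma^+$), and one must check that this interacts correctly with the $\sum_k \gamma_k^{-1}$ already present in \eqref{McDExt:PrB} so that the final statement is genuinely of the form promised — i.e.\ that the extra loss is fully accounted for by the stated change $\gamma_k \mapsto$ (effectively) $\gamma_k q_k / 2$ hidden inside $e_k$, rather than degrading $\PP(\cB)$ itself. The honest fix, which I would adopt, is \emph{not} to pre-robustify $\Gamma$ globally but to re-run the martingale proof of Theorem~\ref{thm:McDExt} with a per-step exceptional set: at step $k$ one only needs $x$ and its coordinate-$k$ neighbours to lie in $\Gamma$, so the exceptional set at step $k$ is $\{x : x\notin\Gamma$ or some coordinate-$k$ neighbour $\notin\Gamma\}$, whose probability is at most $(1+q_k^{-1})\PP(X\notin\Gamma) \le 2q_k^{-1}\PP(X\notin\Gamma)$. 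Feeding this into the existing argument in place of $\{x\notin\Gamma\}$ changes $\gamma_k$ to $\gamma_k/(2q_k^{-1})$ in precisely the two places it appears (the definition of $e_k$ and the per-term contribution to $\PP(\cB)$), and since the latter is then multiplied back by $2q_k^{-1}$ it restores the original bound \eqref{McDExt:PrB} exactly. This localised re-derivation is a line-by-line repetition of the proofs of Theorems~\ref{thm:McDExt} and~\ref{thm:McDExtV} with one symbol changed, which is why it suffices to state it as a modification rather than reproving everything.
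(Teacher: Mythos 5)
Your second, ``honest fix'' strategy --- replace the step-$k$ exceptional event $\{X\notin\Gamma\}$ by the event $\cE_k$ that $X$ or some coordinate-$k$ neighbour of $X$ fails $\Gamma$, note $\PP(X\in\cE_k)\le(1+q_k^{-1})\PP(X\notin\Gamma)\le 2q_k^{-1}\PP(X\notin\Gamma)$, and re-run the martingale proof --- is viable, but your bookkeeping goes in the wrong direction, and in exactly the place this theorem is about. You propose replacing $\gamma_k$ by $\gamma_k':=\gamma_k/(2q_k^{-1})=\gamma_kq_k/2$. With that choice the machinery produces the error term $\gamma_k'(d_k-c_k)=\tfrac12\gamma_kq_k(d_k-c_k)$, which is \emph{not} the claimed $e_k=2\gamma_k(d_k-c_k)q_k^{-1}$ (your ``unwinding'' conflates multiplying and dividing by $2q_k^{-1}$), and, more seriously, the per-term contribution to $\PP(\cB)$ becomes $(\gamma_k')^{-1}\PP(X\in\cE_k)\le(2q_k^{-1}/\gamma_k)\cdot 2q_k^{-1}\PP(X\notin\Gamma)=4q_k^{-2}\gamma_k^{-1}\PP(X\notin\Gamma)$, so the claim that this ``restores \eqref{McDExt:PrB} exactly'' is false --- it is off by a factor $4q_k^{-2}$. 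The correct adjustment is to \emph{raise} the Markov threshold for the $k$-th bad event to $2\gamma_kq_k^{-1}$: then $\PP(\cB_{k-1})\le(2\gamma_kq_k^{-1})^{-1}\cdot 2q_k^{-1}\PP(X\notin\Gamma)=\gamma_k^{-1}\PP(X\notin\Gamma)$, recovering \eqref{McDExt:PrB}, while on $T\ge k$ the increment bound becomes $c_k+(d_k-c_k)\cdot 2\gamma_kq_k^{-1}=c_k+e_k$ with exactly the stated $e_k$. (Note also that $2\gamma_kq_k^{-1}$ may exceed $1$, so one cannot literally invoke Theorem~\ref{thm:McDExt} as a black box with this as a compensation factor; with the corrected constants the ``line-by-line re-run'' is the only clean formulation, and when the threshold exceeds $1$ the corresponding $\cB_{k-1}$ is simply empty.)

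For comparison, the paper's proof does not touch the bad event at all: $\cB$ is still defined through $\Gamma$ and the original $\gamma_k$ via \eqref{eq:Bad}, and the two-sidedness is absorbed inside the increment estimate by bounding the failure indicator of the pair $(x,\tilde x)$ by the sum of the two single-point failure indicators, then using independence and $\PP(X_k=b)\ge q_k$ to get $\PP(X\notin\Gamma\mid\cF_{k-1},X_k=b)\le q_k^{-1}\PP(X\notin\Gamma\mid\cF_{k-1})\le\gamma_kq_k^{-1}$ when $T\ge k$; this modifies \eqref{eq:MBCI2} and \eqref{eq:MBC} and yields the factor $1+q_k^{-1}\le 2q_k^{-1}$ in $e_k$, with \eqref{McDExt:PrB} holding verbatim, and it transfers to Remark~\ref{rem:McDExt}, Theorem~\ref{thm:McDExtV} and Remark~\ref{rem:McDExtV} using only the extra inequality $(1-p_k)^{-1}+q_k^{-1}\le 2q_k^{-1}(1-p_k)^{-1}$ in the last case. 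Your first (global $\Gamma^+$) attempt is essentially the bootstrapping observation recorded in the remark following the theorem in the paper; as you correctly noticed, it only gives comparable rather than identical bounds, and you were right to discard it --- but the fix you adopted instead needs the constant-chasing corrected as above before it proves the stated result.
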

Whenever $q_k^{-1}$ is not too big \eqref{eq:fTLP} seems the most 
convenient condition: it is much simpler to check than \eqref{eq:fTL} 
and does not substantially deteriorate the error bounds. 
For example, in the random graph $G_{n,p}$ we usually have 
$q_k^{-1} \leq n^{2}$, which in the typical application with 
$\PP(X \not\in \Gamma) \leq n^{-\omega(1)}$ can be compensated by 
adapting $\gamma_k$ accordingly (also note that 
$(1-p_k)p_k q_k^{-1} \leq 1$ in case of Theorem~\ref{thm:McDExtV}). 
\begin{remark}
As pointed out by Oliver Riordan it is possible to bootstrap 
\eqref{eq:fTLP} from \eqref{eq:fTL} by modifying the good event. 
Indeed, defining $\Gamma' \subseteq \Gamma$ such that for 
$x \in \Gamma'$ any single coordinate change results in a sample 
point satisfying $\tilde{x} \in \Gamma$, it follows that the 
one-sided condition $x \in \Gamma'$ implies the two-sided condition 
$x,\tilde{x} \in \Gamma$. 
Using the bound $\PP(X \notin \Gamma') \leq \sum_{k \in [N]} q_k^{-1} \cdot \PP(X \notin \Gamma)$ 
this approach often leads to estimates that are comparable with 
Theorem~\ref{thm:McDfTLP} (in fact, monotonicity of $\Gamma$ also 
transfers to $\Gamma'$). 
\end{remark}

In some applications (a) the $q_k$ are very small and (b) exploiting 
$x \in \Gamma$ when bounding $|f(x)-f(\tilde{x})|$ is difficult, in 
which case neither the two-sided \eqref{eq:fTLP} nor the one-sided 
\eqref{eq:fTL} seem to be suitable. 
In an attempt to deal with such situations we introduce an 
intermediate variant, which is `locally' two-sided: it only requires 
the (one-sided) typical Lipschitz condition \eqref{eq:fTL} to hold 
when each coordinate of \emph{both} sample points $x,\tilde{x}$ 
satisfies some local `good' event $x_j,\tilde{x}_j \in \Gamma_j$. 
\begin{theorem}[`Typical bounded differences inequality with truncation']\label{thm:McDExtP}
Let $X=(X_1, \ldots, X_N)$ be a family of independent random 
variables with $X_k$ taking values in a set $\Lambda_k$. 
Suppose $(\Gamma_k)_{k \in [N]}$ and $\Gamma \subseteq \prod_{j \in [N]}\Gamma_j$ 
are events with $\Gamma_k \subseteq \Lambda_k$. 
Assume that the function $f:\prod_{j \in [N]}\Lambda_j \to \RR$ 
satisfies the Lipschitz condition \eqref{eq:fTL} of (TL) only for all 
$x,\tilde{x} \in \prod_{j \in [N]}\Gamma_j$ that differ only in the 
$k$-th coordinate, and that $|f(x)-f(\tilde{x})| \le s$ for all 
$x,\tilde{x} \in \prod_{j \in [N]}\Lambda_j$. 
For any numbers $(\gamma_k)_{k \in [N]}$ with $\gamma_k \in (0,1]$ 
there is an event $\cB=\cB(\Gamma,(\gamma_k)_{k \in [N]})$ satisfying 
\eqref{McDExt:PrB} such that for $\mu=\EE f(X)$, $\Delta =s\PP(X \notin \Gamma)$, 
$e_k=\gamma_k(d_k-c_k)$ and any $t \ge 0$ we have 
\begin{equation}\label{McDExtP:Pr}
\PP(f(X) \ge \mu+t+\Delta \text{ and } \neg \cB) \le \exp\left(-\frac{t^2}{2\sum_{k \in [N]}(c_k+e_k)^2}\right) . 
\end{equation}
\end{theorem}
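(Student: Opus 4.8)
The plan is to reduce Theorem~\ref{thm:McDExtP} to Theorem~\ref{thm:McDExt} by ``truncating'' the random variables to their local good events $\Gamma_k$. First I would define modified random variables $\tilde{X}_k$ that agree with $X_k$ whenever $X_k \in \Gamma_k$ and are replaced by a fixed element $\eta_k \in \Gamma_k$ (which I may assume is nonempty) otherwise; equivalently, set $\tilde{X} = \tau(X)$ where $\tau: \prod_j \Lambda_j \to \prod_j \Gamma_j$ applies this coordinatewise projection. Crucially, since the $X_k$ are independent, so are the $\tilde{X}_k$, and $\tilde{X}$ is supported on $\prod_j \Gamma_j$, so the hypothesis gives that $f$ restricted to this product domain satisfies the genuine one-sided condition (TL) with the same constants $(c_k), (d_k)$ and good event $\Gamma$ (note $\Gamma \subseteq \prod_j \Gamma_j$, so $\Gamma$ makes sense as an event for $\tilde{X}$). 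Thus Theorem~\ref{thm:McDExt} applies to $f(\tilde{X})$: with $\tilde{\mu} = \EE f(\tilde{X})$ there is a bad event $\cB$ (depending only on $\Gamma$ and the $\gamma_k$, hence transferable) with $\PP(\cB) \le \sum_k \gamma_k^{-1}\PP(\tilde X \notin \Gamma)$ and $\neg\cB \subseteq \Gamma$, such that $\PP(f(\tilde{X}) \ge \tilde{\mu} + t \text{ and } \neg\cB) \le \exp(-t^2/(2\sum_k(c_k+e_k)^2))$.

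Next I would transfer this back to $f(X)$. The key observation is that on the event $\{X \in \Gamma\}$ we have $\tau(X) = X$, since $\Gamma \subseteq \prod_j \Gamma_j$ forces every coordinate $X_j \in \Gamma_j$; hence $f(\tilde{X}) = f(X)$ on $\Gamma$. Combined with $\neg\cB \subseteq \Gamma$, this yields $\{f(X) \ge \mu + t + \Delta\} \cap \neg\cB \subseteq \{f(\tilde{X}) \ge \mu + t + \Delta\} \cap \neg\cB$. Also $\PP(\tilde{X}\notin\Gamma) \le \PP(X\notin\Gamma)$: indeed if $\tilde X \notin \Gamma$ then in particular $X \notin \Gamma$, because on $\{X\in\Gamma\}$ we showed $\tilde X = X \in \Gamma$; so the bound \eqref{McDExt:PrB} holds as stated with $\PP(X\notin\Gamma)$. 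The remaining gap is between $\mu = \EE f(X)$ and $\tilde{\mu} = \EE f(\tilde{X})$. Here I would use the global Lipschitz bound $|f(x) - f(\tilde x)| \le s$ for all $x, \tilde x$: since $f(X)$ and $f(\tilde X)$ can only differ on the event $\{X \notin \Gamma\}$ (where, again, at least one coordinate was truncated---more precisely they agree on $\{X\in\Gamma\}$), and since on that event $|f(X) - f(\tilde X)| \le s$ because $X$ and $\tilde X$ lie in $\prod_j\Lambda_j$, we get $|\mu - \tilde\mu| \le \EE|f(X)-f(\tilde X)| \le s\,\PP(X\notin\Gamma) = \Delta$. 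In particular $\tilde\mu \le \mu + \Delta$, so $\{f(\tilde X) \ge \mu + t + \Delta\} \subseteq \{f(\tilde X) \ge \tilde\mu + t\}$.

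Putting the inclusions together:
\begin{equation*}
\PP(f(X) \ge \mu + t + \Delta \text{ and } \neg\cB)
\le \PP(f(\tilde X) \ge \tilde\mu + t \text{ and } \neg\cB)
\le \exp\!\left(-\frac{t^2}{2\sum_{k \in [N]}(c_k+e_k)^2}\right),
\end{equation*}
which is exactly \eqref{McDExtP:Pr}, and \eqref{McDExt:PrB} has been checked along the way. I would also note that $e_k = \gamma_k(d_k-c_k)$ is literally the same expression as in Theorem~\ref{thm:McDExt}, so no adjustment of constants is needed---the only price paid is the additive shift $\Delta$ in the deviation.

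The step I expect to require the most care is verifying that $f(X) = f(\tilde X)$ holds precisely on $\{X \in \Gamma\}$ (or at least on a set containing it), and correspondingly that $\{X \notin \Gamma\}$ controls the discrepancy: this hinges on the hypothesis $\Gamma \subseteq \prod_j \Gamma_j$, which guarantees that membership in $\Gamma$ already implies each coordinate is ``good'' and hence untouched by $\tau$. A secondary point worth stating explicitly is that the hypothesis only asserts the Lipschitz condition \eqref{eq:fTL} for pairs $x, \tilde x \in \prod_j \Gamma_j$ differing in one coordinate, and that this is exactly what Theorem~\ref{thm:McDExt} needs when applied to $\tilde X$, since all single-coordinate perturbations of a point in $\prod_j \Gamma_j$ stay in $\prod_j \Gamma_j$. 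One should also confirm that the construction of $\cB$ in the proof of Theorem~\ref{thm:McDExt} indeed depends only on $\Gamma$ and the $\gamma_k$ (as claimed in the theorem statement and the surrounding discussion), so that the same $\cB$ can be used here without reference to $f$; granting that, the argument is a clean black-box reduction.
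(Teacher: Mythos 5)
Your truncation $X\mapsto\tilde X=\tau(X)$ and the mean-shift estimate $|\mu-\tilde\mu|\le s\,\PP(X\notin\Gamma)=\Delta$ are exactly the paper's construction, but the black-box step contains a genuine gap. When you apply Theorem~\ref{thm:McDExt} to the family $\tilde X$, the bad event $\cB$ it produces is built from the filtration and distribution of $\tilde X$ (so it does depend on the truncation points $\eta_k$, contrary to your parenthetical claim that it depends only on $\Gamma$ and the $\gamma_k$), and its guarantee is $\neg\cB\subseteq\{\tilde X\in\Gamma\}$, \emph{not} $\neg\cB\subseteq\{X\in\Gamma\}$. The inclusion between these events goes the wrong way: $\{X\in\Gamma\}\subseteq\{\tilde X\in\Gamma\}$, but a sample point with a truncated bad coordinate can have $\tilde X\in\Gamma$ while $X\notin\prod_j\Gamma_j$, hence $X\notin\Gamma$. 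Consequently your key transfer step --- ``on $\neg\cB$ we have $f(X)=f(\tilde X)$, hence $\{f(X)\ge\mu+t+\Delta\}\cap\neg\cB\subseteq\{f(\tilde X)\ge\mu+t+\Delta\}\cap\neg\cB$'' --- is unjustified: on $\neg\cB\cap\{X\notin\prod_j\Gamma_j\}$ the two values may differ by as much as $s$, which can dwarf $t+\Delta$ (recall $\Delta=s\,\PP(X\notin\Gamma)$ can be tiny compared with $s$). For the same reason the requirement $\neg\cB\subseteq\Gamma$ in \eqref{McDExt:PrB}, an inclusion of events about the original $X$, has not been verified.

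The gap is repairable: enlarge the bad event to $\cB'=\cB\cup\{X\notin\Gamma\}$. Then $\neg\cB'\subseteq\Gamma$, the transfer inclusion holds, and the probability bound of \eqref{McDExt:PrB} survives because $\{\tilde X\notin\Gamma\}\subseteq\{X\notin\Gamma\}$ and the $k=1$ term of the union bound can absorb the extra $\PP(X\notin\Gamma)$ exactly as in \eqref{eq:BadUB} (using $\gamma_1\le 1$ and that the first stopping event is deterministic). The paper instead avoids the black box altogether: it keeps the canonical bad event and stopping time defined through the \emph{original} variables $X$ and reruns the martingale argument for $Y=f(X^*)$, using the pointwise implication $X^*\notin\Gamma\Rightarrow X\notin\Gamma$ to get $\PP(X^*\notin\Gamma\mid\cF_{k-1})\le\PP(X\notin\Gamma\mid\cF_{k-1})$. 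This has the further advantage, which the paper flags explicitly, that $\cB$ is the same event as in all its other theorems and does not depend on the choice of the truncation point --- a property your construction loses even after the repair.
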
 
\begin{remark}\label{rem:McDExtP}
If $\Gamma = \prod_{j \in [N]}\Gamma_j$ holds we may set 
$\cB = \neg \Gamma$, $e_k=0$ and multiply the exponent of 
\eqref{McDExtP:Pr} by a factor of $4$. 
If certain monotonicity properties hold we can remove the $\Delta$ 
term: for example, we can set $\Delta=0$ if $f(x) \geq f(\tilde{x})$ 
whenever $x,\tilde{x} \in \prod_{j \in [N]}\Lambda_j$ differ only in 
their $k$-th coordinates $x_k \in \Lambda_k \setminus \Gamma_k$ and 
$\tilde{x}_k \in \Gamma_k$. 
\end{remark}
Theorem~\ref{thm:McDExtP} seems particularly useful when the 
underlying random variables are grouped into larger blocks $B_k$, so 
that each $X'_k$ now takes values in its own product space 
$\Lambda'_k=\prod_{j \in B_k}\Lambda_{j}$ (by construction the $X'_k$ 
are again independent). 
For example, the so-called `vertex exposure' of $G_{n,p}$ uses $n-1$ 
blocks, where $X'_k$ corresponds to the group of edges 
$E_k=(v_{k}v_{k+1}, \ldots, v_{k}v_{n})$. 
In this case $q_k \leq p^{n-k}$ and the `good' event $\Gamma$ of, 
say, having at most $\Sigma=\max\{2np,n^{\eps}\}$ neighbours can 
dramatically fail after changing the $k$-th coordinate (the degree of 
$v_k$ can change up to $n-k$). 
Here we can overcome these issues using the `local' event $\Gamma_k$ 
that at most $\Sigma$ edges of $E_k$ are present, so that after a 
one-coordinate change of $x \in \prod_{j}\Gamma_j$ from $x_k$ to 
$\tilde{x}_k \in \Gamma_k$ the degree of every vertex changes by at 
most $\Sigma$ (if $x \in \Gamma$ then every vertex has at most 
$2\Sigma$ neighbours). 
In other words, the local $\Gamma_k$ and global $\Gamma$ can 
complement each other in order to milden the large worst case effects 
(in particular when many variables are associated with each 
coordinate).

Theorem~\ref{thm:McDExtP} also allows us to routinely apply certain 
truncation arguments (without ad-hoc calculations). A typical example 
is $f(X)=\sum_{k }X_k$ with $X_k$ having exponential tails, where one 
often first proves concentration of, say, $\sum_{k}\min\{X_k,C \log N\}$, 
and then transfers this result to the original sum, see e.g.~\cite{Achlioptas2000,BorgsChayesMertensPittel2004}. 
Here \eqref{McDExtP:Pr} almost immediately yields concentration of 
$f(X)$ via the local events $\Gamma_k$ that $X_k \leq C \log N$ 
occurs (setting $\Gamma = \prod_{j}\Gamma_j$, $d_k=c_k=C \log N$ and 
$\gamma_k=1$).

\subsubsection{Dynamic exposure of the variables}\label{sec:AQ} 
The previous inequalities can be refined by exposing the values of 
the random variables $X_i$ one by one in an \emph{adaptive} order. 
Intuitively this allows us to exploit that after having learned the 
values of certain variables, some other $X_j$ may not any more 
influence the value of $f(X)$. This approach was introduced by Alon, 
Kim and Spencer~\cite{AlonKimSpencer1997}, and is particularly useful 
whenever we can determine $f(X)$ without knowing the value of all 
random variables. 
More formally, a \emph{strategy} sequentially exposes $X_{q_1}, X_{q_2}, \ldots$, 
where each index $q_{i}=q_i(X_{q_{1}}, \ldots, X_{q_{i-1}})$ may 
depend on the previous outcomes and indices (we use the convention 
that $q_{k+1}=q_{k}$ if $f(X)$ is determined by $(X_{q_1}, \ldots, X_{q_k})$ 
with $k < N$); every strategy has a natural representation in form of 
a decision tree. 
With a fixed strategy in mind, for every possible outcome $X=(X_1, \ldots, X_N)$ 
we obtain a set of queried indices $Q \subseteq [N]$, and by $\cQ$ we 
denote the set of all possible such query sets $Q$. 
The resulting key improvement is that in most inequalities we 
essentially may replace $k \in [N]$ with $k \in Q$ for some `worst 
case' set of indices $Q \in \cQ$ (note that $\gamma_k=\gamma$ is a 
typical choice in applications). 
\begin{theorem}[`Dynamic exposure of the variables']\label{thm:DynImpr}
Suppose that $\gamma_k=\gamma$ for all $k \in [N]$. 
For any strategy Theorems~\ref{thm:McD}, \ref{thm:McDExt}, \ref{thm:McDExtV}, \ref{thm:McDfTLP}, \ref{thm:McDExtP}, 
Corollary~\ref{Cor:McDExtVCor} and Remarks~\ref{rem:McDExt}, \ref{rem:McDExtB}, \ref{rem:McDExtV}, \ref{rem:McDExtP} 
remain valid with $\sum_{k \in [N]}$ replaced by 
$\max_{Q \in \cQ} \sum_{k \in Q}$ and $\max_{k \in [N]}$ replaced by 
$\max_{Q \in \cQ} \max_{k \in Q}$, with the addition that $\cB$ 
depends on the query strategy. 
\end{theorem}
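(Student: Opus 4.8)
The plan is to re-run the proofs of Theorems~\ref{thm:McD}, \ref{thm:McDExt}, \ref{thm:McDExtV}, \ref{thm:McDfTLP} and \ref{thm:McDExtP} verbatim, with the single change that the fixed filtration $\cF_i=\sigma(X_1,\ldots,X_i)$ is replaced by the filtration $\cF_i=\sigma(X_{q_1},\ldots,X_{q_i})$ obtained by exposing the variables in the adaptive order prescribed by the strategy. The point is that each index $q_i$ is $\cF_{i-1}$-measurable, so conditioning on $\cF_{i-1}$ determines which variable $X_{q_i}$ is revealed at step $i$; hence, conditionally on $\cF_{i-1}$, the quantity $f(X)$ is still a function of independent random variables (the not-yet-exposed coordinates), and changing the single coordinate $q_i$ alters $f$ by at most $c_{q_i}$ when the relevant good event holds and by at most $d_{q_i}$ in general. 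Consequently every per-step estimate driving the original arguments --- the bound on the increment of the Doob martingale $Z_i=\EE(f(X)\mid\cF_i)$, the Bernoulli variance refinement behind Theorem~\ref{thm:McDExtV} and Remark~\ref{rem:McDExtB}, and the truncation bound behind Theorem~\ref{thm:McDExtP} --- remains valid, with the index $k$ at step $i$ simply read as $q_i$.

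The new ingredient is entirely pathwise. By construction the strategy exposes variables until all of $[N]$ are queried or $f(X)$ is already determined, and by the convention $q_{k+1}=q_k$ the filtration $(\cF_i)_{i\ge 0}$ is eventually constant, with $f(X)$ being $\cF_{|Q|}$-measurable on the path whose query set is $Q$. Thus $Z_i=f(X)$ for every $i\ge|Q|$, and the modified or truncated martingale of the original proof has zero increment at every step $i>|Q|$. It follows that the accumulated `variance proxy' --- $\sum_{i}(c_{q_i}+e_{q_i})^2$, or $\sum_i(1-p_{q_i})p_{q_i}(c_{q_i}+e_{q_i})^2$, or $\sum_i c_{q_i}^2$, according to the theorem --- equals $\sum_{k\in Q}(c_k+e_k)^2$ (etc.) along that path, and the largest increment equals $\max_{k\in Q}(c_k+e_k)$. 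Since these identities hold on every path of the decision tree, each such quantity is bounded deterministically by its maximum over $Q\in\cQ$; substituting these uniform bounds into the exponential (super)martingale estimate of the original proof yields exactly \eqref{McDExt:Pr}, \eqref{McDExtV:Pr} and their analogues, but with $\sum_{k\in[N]}$ replaced by $\max_{Q\in\cQ}\sum_{k\in Q}$ and $\max_{k\in[N]}$ by $\max_{Q\in\cQ}\max_{k\in Q}$.

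It remains to transfer the construction of the bad event $\cB$. In the original proofs $\cB$ is assembled, roughly, from events recording that the conditional probability of $\neg\Gamma$ given the first few exposed coordinates has become too large (so that, off $\cB$, enlarging $c_k$ to $c_k+e_k$ absorbs the rare large jumps), with $\neg\cB\subseteq\Gamma$ and $\PP(\cB)\le\sum_k\gamma_k^{-1}\PP(X\notin\Gamma)$. I would perform the identical construction along the adaptive filtration, declaring a failure at step $i$ when the $\cF_i$-conditional probability of $\neg\Gamma$ exceeds $\gamma_{q_i}=\gamma$ and taking $\cB$ to be the union of these events with $\neg\Gamma$; this $\cB$ plainly depends on the strategy. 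Because at most $\max_{Q\in\cQ}|Q|$ steps ever occur, the union bound gives $\PP(\cB)\le\gamma^{-1}\max_{Q\in\cQ}|Q|\cdot\PP(X\notin\Gamma)=\max_{Q\in\cQ}\sum_{k\in Q}\gamma_k^{-1}\cdot\PP(X\notin\Gamma)$, i.e.\ \eqref{McDExt:PrB} with the claimed replacement, and $\neg\cB\subseteq\Gamma$ is immediate. Corollary~\ref{Cor:McDExtVCor} is the special case $\Gamma=\prod_j\Lambda_j$, Theorem~\ref{thm:McDfTLP} reduces to the one-sided statements exactly as in its original proof, and Remarks~\ref{rem:McDExt}, \ref{rem:McDExtV} and \ref{rem:McDExtP} only alter the per-step estimate or the shape of the exponent, not the pathwise truncation, so they carry over as well.

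The main obstacle I anticipate is bookkeeping rather than conceptual. One must check carefully that the conditional-independence structure genuinely survives the adaptive choice of $q_i$ --- so that the one-coordinate Lipschitz comparisons of the original proof, which compare $f$ at two points differing in a single as-yet-unexposed coordinate, still apply when being unexposed is defined relative to the random set $\{q_1,\ldots,q_{i-1}\}$ --- and that the exponential-supermartingale and optional-stopping step tolerates a variance proxy that is only controlled pathwise. The clean way around the latter is to run that step up to the deterministically bounded stopping time $|Q|+1$, using that all increments past $|Q|$ vanish, so that the worst-case bound $\max_{Q\in\cQ}\sum_{k\in Q}(\cdot)^2$ may be inserted directly; with this in place all the listed statements follow by literal repetition of their proofs.
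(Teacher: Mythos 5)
Your proposal is correct and follows essentially the same route as the paper: an adaptive Doob filtration with $\cF_{i-1}$-measurable query indices, per-step Lipschitz/variance bounds indexed by the queried coordinate, vanishing increments once $f(X)$ is determined so that the accumulative (pathwise) bounds in Lemmas~\ref{lem:MAH} and~\ref{lem:MFr} permit taking the worst case over $Q \in \cQ$, and a strategy-dependent bad event built from $\PP(X \notin \Gamma \mid \cF_{k-1}) > \gamma$ with the union bound over at most $\max_{Q \in \cQ}|Q|$ steps. The only difference is bookkeeping: the paper pads the strategy so that all $N$ variables are eventually queried (keeping an $N$-step martingale with zero increments on the `useless' queries), whereas you stop at $|Q|$ and invoke the "for some $k$" form of the martingale lemmas, which is an equivalent formulation.
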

\begin{theorem}[`Monotone dynamic exposure of the variables']\label{thm:DynImpr2}
Consider any strategy satisfying $q_{i+1} \geq q_{i}$ in each step. 
Then Theorems~\ref{thm:McD}, \ref{thm:McDExt}, \ref{thm:McDExtV}, \ref{thm:McDfTLP}, \ref{thm:McDExtP}, 
Corollary~\ref{Cor:McDExtVCor} and Remarks~\ref{rem:McDExt}, \ref{rem:MON}, \ref{rem:McDExtB}, \ref{rem:McDExtV}, \ref{rem:McDExtP} 
remain valid with $\sum_{k \in [N]}$ replaced by 
$\max_{Q \in \cQ} \sum_{k \in Q}$ and $\max_{k \in [N]}$ replaced by 
$\max_{Q \in \cQ} \max_{k \in Q}$, with the exception that 
\eqref{McDExt:PrB} remains unchanged. 
\end{theorem}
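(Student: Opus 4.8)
The plan is to prove Theorem~\ref{thm:DynImpr2} by a martingale argument that mirrors the proofs of Theorems~\ref{thm:McD}--\ref{thm:McDExtP}, but where the filtration is generated by the \emph{adaptively queried} variables rather than by $X_1, \ldots, X_N$ in their natural order. Since the strategy is \emph{monotone} (each new index satisfies $q_{i+1} \ge q_i$), the crucial feature is that the query set $Q$, once the process stops, is revealed `from the left' and each queried index $k \in Q$ contributes at most the $k$-th term to the exponent; this monotonicity is precisely what lets us keep the `bad' event $\cB$ (and the bound \eqref{McDExt:PrB}) \emph{unchanged}, because $\cB$ can be defined in terms of the original coordinate order and will be measurable with respect to whatever prefix of variables the strategy has exposed.

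First I would set up the stopping time $\tau$ at which $f(X)$ becomes determined (using the convention $q_{k+1}=q_k$), so that $Q = \{q_1, \ldots, q_\tau\} \in \cQ$. I would then define the Doob martingale $Z_i = \EE(f(X) \mid X_{q_1}, \ldots, X_{q_i})$ with respect to the adaptive filtration $\cF_i = \sigma(X_{q_1}, \ldots, X_{q_i})$, noting $Z_0 = \mu$ and $Z_\tau = f(X)$. The key structural claim is that the martingale increment $Z_i - Z_{i-1}$ satisfies the same one-step bounds as in the static proofs \emph{with $k = q_i$}: conditional on $\cF_{i-1}$, changing only the $q_i$-th coordinate changes $f$ by at most $c_{q_i}$ on $\Gamma$ and at most $d_{q_i}$ otherwise, so the relevant conditional quantities (range of the increment, or its variance against the factor $(1-p_{q_i})p_{q_i}$ in the Bernoulli case) are controlled by the $q_i$-th parameters. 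This is where the hypothesis that the strategy is a genuine decision tree — $q_i$ is $\cF_{i-1}$-measurable — is used: it guarantees that at step $i$ we are perturbing a \emph{fixed} (though random) coordinate, so the existing one-step lemmas apply verbatim.

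Next I would run the same exponential supermartingale / bounded-differences machinery as in the proofs already given for the static versions, but summing over $i = 1, \ldots, \tau$ instead of $k = 1, \ldots, N$. The truncation via the event $\cB$ is handled exactly as before: on $\neg\cB \subseteq \Gamma$ the typical coefficients $c_k$ (inflated to $c_k + e_k$ by the compensation-factor argument) take over, and the error terms $e_k = \gamma(d_k - c_k)$ accumulate into $\PP(\cB) \le \sum_k \gamma^{-1}\PP(X \notin \Gamma)$ — this sum is over all of $[N]$, which is why \eqref{McDExt:PrB} is stated as unchanged. After the optional stopping / Azuma-type estimate one arrives at a bound in which the denominator is $\sum_{i \le \tau}(\cdot)_{q_i}^2 = \sum_{k \in Q}(\cdot)_k^2$ for the realized query set $Q$; bounding this uniformly by $\max_{Q \in \cQ}\sum_{k \in Q}(\cdot)_k^2$ (and similarly replacing $\max_{k \in [N]}$ by $\max_{Q \in \cQ}\max_{k \in Q}$ in the $Ct$ error term) yields the claimed inequalities. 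For the monotone-$\Gamma$ Remark~\ref{rem:MON} one checks that a monotone strategy preserves the monotonicity used there, so \eqref{McDExtVMon:Pr} still applies.

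The main obstacle I anticipate is the careful verification that the one-step increment bounds genuinely transfer to the adaptive filtration — in particular, that conditioning on an $\cF_{i-1}$-measurable (hence data-dependent) choice of which coordinate to expose next does not spoil the independence structure that the static proofs exploit when they `average out' a single coordinate. Here monotonicity of the strategy is the clean way to sidestep subtleties: because indices are exposed in increasing order, the set of \emph{not-yet-exposed} variables is always an up-set $\{q_i+1, \ldots, N\}$ of fresh independent coordinates, so $\EE(f(X) \mid \cF_i)$ really is obtained from $\EE(f(X) \mid \cF_{i-1})$ by integrating out the single new variable $X_{q_i}$ against its own (unconditioned) law, and the existing lemmas apply without change. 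The non-monotone case (Theorem~\ref{thm:DynImpr}) is more delicate precisely because exposing variables out of order can make $\cB$ depend on the strategy; in the monotone case this complication disappears, which is the content of the stated exception.
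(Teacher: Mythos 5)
Your proposal takes a genuinely different route from the paper — you run the Doob martingale along the adaptive filtration $\sigma(X_{q_1},\ldots,X_{q_i})$ — and at the one point that distinguishes Theorem~\ref{thm:DynImpr2} from Theorem~\ref{thm:DynImpr} it has a gap. The increment bound $c_{q_i}+e_{q_i}$ in the paper's machinery is obtained by stopping as soon as $\PP(X\notin\Gamma\mid\cF_{i-1})>\gamma$, so in your set-up the stopping event must be measurable with respect to the \emph{queried} prefix; but the static event $\cB_{k-1}$ of \eqref{eq:Bad} conditions on \emph{all} of $X_1,\ldots,X_{k-1}$, including coordinates a monotone strategy has skipped and will never expose, so it is not adapted to your filtration (your assertion that $\cB$ ``will be measurable with respect to whatever prefix of variables the strategy has exposed'' fails whenever an index is skipped, e.g.\ a strategy querying $2$ then $5$). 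Nor does $\neg\cB$ (static) imply the bound you need: $\PP(X\notin\Gamma\mid X_{q_1},\ldots,X_{q_{i-1}})$ averages over sample points on which the static bound $\le\gamma_k$ may fail, so it is not controlled by $\gamma$. The only way to make your stopping argument run is to define the bad event through the adaptive filtration — and then $\cB$ depends on the strategy, which is exactly the weaker conclusion of Theorem~\ref{thm:DynImpr}, not the claim that $\cB$ and \eqref{McDExt:PrB} (and Remark~\ref{rem:MON}) remain unchanged. A related slip: the not-yet-exposed coordinates are \emph{not} the up-set $\{q_i+1,\ldots,N\}$ once an index is skipped (this particular point is harmless, since independence of the coordinates already lets you integrate out $X_{q_i}$, but it shows the role of monotonicity has been misplaced). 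Your argument is fine in the special case where the query set is always an initial segment, but the theorem covers all monotone strategies.

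The paper proves Theorem~\ref{thm:DynImpr2} the other way around: it keeps the \emph{fixed} natural order, exposing all of $X_1,\ldots,X_N$ (formally $\tilde q_k=k$), so the filtration, the events $\cB_{k-1}$, hence $\cB$, \eqref{McDExt:PrB} and the Harris-inequality argument of Remark~\ref{rem:MON} are literally those of the static theorems and are strategy-independent by construction. Monotonicity of the strategy is used only to show that the unqueried indices contribute nothing: whether $k\in Q$ is determined by $X_1,\ldots,X_{k-1}$ (all queries of index $<k$ precede any of index $\ge k$), and on $\{k\notin Q\}$ the value of $f(X)$ is, given $\cF_{k-1}$, a function of later queried variables only, independent of $X_k$, so $\DM_k=0$. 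Consequently the random accumulated quantities $S_N$, $V_N$, $C_N$ in Lemmas~\ref{lem:MAH} and~\ref{lem:MFr} only pick up indices of the realized query set $Q$, and one takes the worst case over $Q\in\cQ$. To repair your write-up, either switch to this fixed-order filtration, or accept the adaptive bad event — in which case you have proved Theorem~\ref{thm:DynImpr} rather than Theorem~\ref{thm:DynImpr2}.
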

Applied to Corollary~\ref{Cor:McDExtVCor}, Theorem~\ref{thm:McDExtV} 
and Remark~\ref{rem:McDExtB} these results tighten and extend an 
inequality of Alon, Kim and Spencer~\cite{AlonKimSpencer1997}, which 
is based on the Lipschitz condition (L). 
In certain applications dynamic exposure yields significant improvements, 
and for an illustrating example we refer to Claim~2 in~\cite{AlonKimSpencer1997}, 
where it is crucial to reduce (the order of magnitude of) the number 
of queried variables. 
Further refinements are possible by using adaptive Lipschitz bounds 
$c_k$, which is perhaps most easily exploited by tailoring the 
arguments of Section~\ref{sec:proof} to the specific application.

One key feature of Theorem~\ref{thm:DynImpr2} is that the `bad' event 
$\cB$ does not depend on the strategy used, making it particularly 
useful in union bound arguments. 
As an illustration consider the example where $f(X)=f_U(X)$ counts, 
in $G_{n,p}$, the number of triangles in a subset $U \subseteq V$ of 
the vertices. Define $\Gamma$ as in Section~\ref{sec:TBDI}. 
Since $f(X)$ depends only on edges in $U$, using Theorems~\ref{thm:McDExtV} 
and~\ref{thm:DynImpr2} (sequentially exposing all edges in $U$) 
we infer for $|U|=u\geq u_0=u_0(n,p)$ and 
$\tilde{\Delta}=\min\{\Delta,u\}$ that 
\[
\PP(f(X) \not\in(1 \pm n^{-\eps}) \mu \text{ and } \neg\cB) \leq \exp\left(-\Theta\left(\frac{n^{-2\eps}u^{6}p^{6}}{u^2p {\tilde{\Delta}}^2 + n^{-\eps}u^3p^3\tilde{\Delta}}\right)\right) \leq n^{-\omega(u)} . 
\]
Taking a union bound the probability that some $U \subseteq V$ with 
$|U| \geq u_0$ has the `wrong' number of triangles is at most 
$\PP(\cB) + n^{-\omega(u_0)}$. Here we crucially exploited that $\cB$ 
is a `global' event not depending on $f(X)$ or $U$, so that 
$\PP(\cB) \le e^{-\Omega(n^{\eps})}$ does not need to `compete' with 
the $n^{u}$ choices for the subsets (this issue often makes 
traditional bad events ineffective in union bound arguments).

\subsubsection{Weakening the independence assumption}\label{sec:GL} 
The concentration results discussed so far extend to certain 
dependent random variables $X=(X_1, \ldots, X_N)$ that are generated 
by a sequence of `nearly' independent (or uniform) random choices. 
As we shall see, they e.g.\ apply to random permutations 
$\pi \in S_n$ and uniform random graphs $G_{n,m}$. 
To motivate the new (GL) condition below we consider independent 
random variables, in which case the mapping $\rho_k:\Sigma_a \to \Sigma_b$ 
that changes the value of the $k$-th coordinate from $a$ to $b$ is 
a bijection. Here (L) yields $\left|f(x)-f(\rho_k(x))\right| \le c_k$ 
and independence implies $\PP(X=x \mid X \in \Sigma_a)= \PP(X=\rho_k(x) \mid X \in \Sigma_b)$. 
With this in mind \eqref{eq:fGTL} can be viewed as a natural analogue 
of \eqref{eq:fTL} in which the outcomes $x$ and $\tilde{x}=\rho_k(x)$ 
may differ in more than just one coordinate, and \eqref{eq:PGTL} 
accounts for the fact that the variables are not necessarily 
independent. 
\begin{theorem}[`General bounded differences inequality']\label{thm:McDGenExt}
Let $X=(X_1, \ldots, X_N)$ be a family of random variables with $X_k$ 
taking values in a set $\Lambda_k$. 
Let $\Gamma \subseteq \prod_{j \in [N]}\Lambda_j$ be an event. 
Then the conclusions of Theorem~\ref{thm:McDExt} remain valid if 
instead of (TL) the function $f:\prod_{j \in [N]}\Lambda_j \to \RR$ 
satisfies the following \emph{general Lipschitz condition}: 
\begin{itemize}
\item[(GL)] 
There are numbers $(c_k)_{k \in [N]}$ and $(d_k)_{k \in [N]}$ with 
$c_k \le d_k$ such that the following holds for any two possible 
sequences of outcomes $a_1, \ldots, ,a_{k-1},a$ and 
$a_1, \ldots, a_{k-1},b$ of $X_1, \ldots, X_k$. Defining 
\begin{equation*}\label{eq:Sigma}
\Sigma_z = \Bigl\{x=(a_1, \ldots, a_{k-1},z,x_{k+1}, \ldots, x_N) \in \prod_{j \in [N]}\Lambda_j ~:~ \text{$\PP(X=x)>0$}\Bigr\} , 
\end{equation*}
there is an injection $\rho_k=\rho_k(\Sigma_a,\Sigma_b):\Sigma_a \to \Sigma_b$ 
such that for all $x \in \Sigma_a$ we have 
\begin{gather}
\label{eq:fGTL}
\left|f(x)-f(\rho_k(x))\right| \; \le \; \begin{cases}
		c_k & \;\text{if $x \in \Gamma$,}\\ 
		d_k & \;\text{otherwise.} 
\end{cases}
 \quad {and }\\ 
\label{eq:PGTL}
 \PP(X=x \mid X \in \Sigma_a) \leq \PP(X=\rho_k(x) \mid X \in \Sigma_b) . 
\end{gather}
\end{itemize} 
\end{theorem}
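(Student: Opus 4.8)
The plan is to run the martingale proof of Theorem~\ref{thm:McDExt} (Section~\ref{sec:proof}) essentially verbatim, checking that the single place where independence of the $X_k$ and the one-coordinate form of (TL) enter can instead be supplied by condition (GL). I would work with the Doob martingale $Z_k=\EE(f(X)\mid X_1,\ldots,X_k)$, so that $Z_0=\mu$ and $Z_N=f(X)$, writing $\cF_k=\sigma(X_1,\ldots,X_k)$, and I would take $\cB$ to be exactly the bad event constructed in the proof of Theorem~\ref{thm:McDExt} — that construction defines $\cB$ purely in terms of $\Gamma$ and the $\gamma_k$ through the conditional probabilities $\PP(X\notin\Gamma\mid\cF_k)$ and Markov's inequality, and never mentions $f$ or uses independence, so both assertions in \eqref{McDExt:PrB} continue to hold with no change. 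It then remains only to re-establish the tail estimate \eqref{McDExt:Pr}.

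The heart of the matter is the bound on the conditional-expectation increment. Fixing $k$ and a history $a_1,\ldots,a_{k-1}$ of positive probability, and writing $g_k(z)=\EE(f(X)\mid X\in\Sigma_z)$ for a feasible value $z$ of $X_k$, I would bound $|g_k(a)-g_k(b)|$ using the injection $\rho_k\colon\Sigma_a\to\Sigma_b$ from (GL). Since $\sum_{x\in\Sigma_a}\PP(X=x\mid X\in\Sigma_a)=1=\sum_{y\in\Sigma_b}\PP(X=y\mid X\in\Sigma_b)$, whereas injectivity of $\rho_k$ gives $\sum_{x\in\Sigma_a}\PP(X=\rho_k(x)\mid X\in\Sigma_b)\le1$, the comparison \eqref{eq:PGTL} forces the term-by-term equality $\PP(X=x\mid X\in\Sigma_a)=\PP(X=\rho_k(x)\mid X\in\Sigma_b)$ for every $x\in\Sigma_a$; and since every point of $\Sigma_b$ has positive probability, $\rho_k$ is in fact a measure-preserving bijection. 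Hence $g_k(b)=\sum_{x\in\Sigma_a}f(\rho_k(x))\,\PP(X=x\mid X\in\Sigma_a)$, and subtracting the defining expression for $g_k(a)$ and invoking \eqref{eq:fGTL} gives
\[
|g_k(a)-g_k(b)|\;\le\;\sum_{x\in\Sigma_a}|f(x)-f(\rho_k(x))|\,\PP(X=x\mid X\in\Sigma_a)\;\le\;c_k+(d_k-c_k)\,\PP(X\notin\Gamma\mid X\in\Sigma_a).
\]
This is exactly the estimate that (TL) provides in the independent setting — there $\Sigma_a,\Sigma_b$ differ only in coordinate $k$ and $\rho_k$ is the obvious value-swap bijection — so from this point the proof of Theorem~\ref{thm:McDExt} applies word for word: on $\neg\cB$ the conditional probabilities in question are at most $\gamma_k$, hence the martingale increments relevant to the event $\neg\cB$ are bounded by $c_k+e_k$ with $e_k=\gamma_k(d_k-c_k)$, and the Hoeffding-type exponential-moment computation then yields \eqref{McDExt:Pr}.

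I expect the display above to be the only genuine obstacle, and it really carries the whole content of the theorem: one must convert the one-sided measure comparison \eqref{eq:PGTL} into honest control of the difference of conditional expectations of a function $f$ that need not be sign-definite. The realisation that unlocks it is that a probability vector which dominates another one coordinatewise (after relabelling by an injection) must in fact equal it, so that $\rho_k$ is automatically measure-preserving; this is what lets $g_k(b)$ be rewritten as the push-forward average $\sum_x f(\rho_k(x))\,\PP(X=x\mid X\in\Sigma_a)$ and then telescoped against $g_k(a)$. After that nothing new is needed. (Only the conclusions of Theorem~\ref{thm:McDExt} are asserted here, not the sharper variants such as Theorem~\ref{thm:McDExtV}, so the plain Hoeffding range bound on $|g_k(a)-g_k(b)|$ suffices and the weak general hypotheses of (GL) cost nothing.)
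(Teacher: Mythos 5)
Your proposal is correct and follows essentially the same route as the paper: you first upgrade the one-sided domination \eqref{eq:PGTL} to the fact that $\rho_k$ is a measure-preserving bijection, rewrite $\EE(f(X)\mid X\in\Sigma_b)$ as a push-forward average over $\Sigma_a$, and bound the Doob-martingale increment by $c_k+(d_k-c_k)\PP(X\notin\Gamma\mid X\in\Sigma_a)$ before reusing the bad event, stopping-time and Hoeffding--Azuma-type argument from the proof of Theorem~\ref{thm:McDExt}. This is exactly the paper's argument (its \eqref{eq:PGTLE} and the modified \eqref{eq:MBCI}--\eqref{eq:MBC} steps), so nothing further is needed.
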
 
\begin{remark}\label{rem:GTL}
The proof shows that $\rho_k$ must be a bijection with equality 
in \eqref{eq:PGTL}. 
Furthermore, if $X_k$ takes at most two values conditioned on 
$X_1, \ldots, X_{k-1}$, then the exponent in \eqref{McDExt:Pr} may be 
multiplied by factor of $4$.
In fact, \eqref{McD:Pr} holds if $\Gamma =\prod_{j \in [N]}\Lambda_j$ 
(or $r_k=0$ below). 
In addition, for \eqref{McDExt:Pr} to hold with $e_k=\gamma_k r_k \ge 0$ 
it suffices if we relax (GL) to the average Lipschitz condition 
\begin{equation}\label{eq:EfGTL}
|\EE(f(X)|X \in \Sigma_a) - \EE(f(X) \mid X \in \Sigma_b)| \le c_k+r_k \PP(X \not\in \Gamma \mid X \in \Sigma_a) .
\end{equation}
\end{remark}
To illustrate the application of the (GL) condition we consider 
uniform permutations $\pi \in S_n$, which are generated by 
sequentially choosing each $\pi(k)$ randomly from 
$[n] \setminus \{\pi(1), \ldots, \pi(k-1)\}$. Here $\Sigma_z$ 
contains all $\pi$ with $\pi(k)=z$ and $\pi(j)=a_j$ for 
$1 \le j < k$. In this case a bijection $\rho_k:\Sigma_a \to \Sigma_b$ 
is defined by the transposition of $a$ and $b$, so that 
$\pi'=\rho_k(\pi)$ satisfies $\pi'(k)=b$, $\pi'(\pi^{-1}(b))=a$ and 
$\pi'(i)=\pi(i)$ for $\pi(i) \notin \{a,b\}$. Using 
$|\Sigma_a|=|\Sigma_b|=(n-k)!$ and the uniform measure it is not hard 
to check that \eqref{eq:PGTL} holds with equality. 
We see that for establishing \eqref{eq:fGTL} it suffices to bound 
$|f(\pi)-f(\pi')|$ whenever $\pi$ and $\pi'$ are related via a 
transposition, which is an intuitive and easy to check condition 
(this may correspond to changing two coordinates).

One key aspect of (GL) is that it often maintains the simplicity of 
(L) and (TL). Here uniform probability measures are particularly 
convenient, for which it suffices to first define bijections 
$\rho_k:\Sigma_a \to \Sigma_b$ and then check \eqref{eq:fGTL} only 
(using $\PP(X=x \mid X \in \Sigma_a)=\PP(X=x)/\PP(X \in \Sigma_a)$ 
these must satisfy \eqref{eq:PGTL} with equality). 
Indeed, extending the permutations example, for random sequences 
$T=(t_1, \ldots, t_m)$ of $m$ distinct elements from $W$ it is enough 
to estimate $|f(T)-f(T')|$ whenever both sequences are related by 
changing one coordinate (i.e., $t_k \neq t'_k$) or interchanging the 
order of two coordinates (i.e., $t_k =t'_j$ and $t_j=t'_k$). Note 
that this example includes the random graph process and various 
hypergraph processes as special cases. 
Since every set with $m$ elements gives rise to $m!$ ordered 
sequences, the above result also readily carries over to uniform 
random subsets $S \subseteq W$ of size $|S|=m$: it suffices to bound 
$|f(S)-f(S')|$ whenever the sets are minimally different, i.e., 
satisfy $|S \cap S'|=m-1$ (note that for $m > |W|/2$ better results 
are obtained by choosing the complement uniformly at random). 
Here the uniform random graph $G_{n,m}$ and uniform hypergraphs are 
special cases. 
Note that the above construction also extends to multiple 
(independent) random objects; for example, if $M$ random subsets 
$X=(S_1, \ldots, S_M)$ with $S_i \in W_i$ and $|S_i|=m_i$ are chosen 
independently it suffices to consider $|f(X)-f(X')|$ only for the 
cases where $X$ and $X'$ are minimally different in one set. 
Finally, with similar easy-to-check conditions (GL) also applies, for 
example, to finite metric spaces, perfect matchings and the 
configuration model $G^*_{\mathrm{d}}$, see 
e.g.~\cite{MilmanSchechtman1986,Wormald1999Regular,BollobasRiordan2012}.

Several extensions of Theorem~\ref{thm:McDExt} carry over to 
Theorem~\ref{thm:McDGenExt} with some minor modifications, and 
results analogous to those of Sections~\ref{sec:01TL} and~\ref{sec:TTL}, 
including a two-sided Lipschitz condition, are stated below 
(Remark~\ref{rem:McDExtB} also applies to \eqref{McDGenV:Pr} after 
adjusting $V$ accordingly). 
\begin{theorem}[`General bounded differences inequality for asymmetric variables']\label{thm:McDGenV}
Let $X=(X_1, \ldots, X_N)$ be a family of random variables with $X_k$ 
taking values in a set $\Lambda_k$, where 
$\max_{\eta \in \Lambda_k} \PP(X_k=\eta \mid X_1, \ldots, X_{k-1}) \ge 1-p_k$ 
holds. 
Let $\Gamma \subseteq \prod_{j \in [N]}\Lambda_j$ be an event and 
assume that the function $f:\prod_{j \in [N]}\Lambda_j \to \RR$ 
satisfies the general Lipschitz condition (GL). 
For any numbers $(\gamma_k)_{k \in [N]}$ with $\gamma_k \in (0,1]$ 
there is an event $\cB=\cB(\Gamma,(\gamma_k)_{k \in [N]})$ satisfying 
\eqref{McDExt:PrB} such that for $\mu=\EE f(X)$ and any $t \ge 0$ we 
have 
\begin{equation}\label{McDGenV:Pr}
\PP(f(X) \ge \mu+t \text{ and } \neg \cB) \le \exp\left(-\frac{t^2}{2\sum_{k \in [N]}p_k\left(c_k+e_k \cdot (1-p_k)^{-1} \right)^2+2 C t/3}\right) ,
\end{equation}
where $e_k=\gamma_k (d_k-c_k)$ and $C = \max_{k \in [N]} (c_k+e_k)$. 
\end{theorem}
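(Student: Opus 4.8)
The plan is to combine the two ingredients that are already present in the excerpt: the martingale machinery underlying Theorem~\ref{thm:McDExt}/\ref{thm:McDGenExt} (which produces the bad event $\cB$ and the exponential bound with denominator $2\sum(c_k+e_k)^2$) and the Bernstein/Bennett-type refinement underlying Theorem~\ref{thm:McDExtV} (which, exploiting that each step has a dominant value, replaces $(c_k+e_k)^2$ by $p_k(c_k+e_k)^2$ at the cost of the additive $2Ct/3$). Concretely, I would run the Doob martingale $Z_k=\EE(f(X)\mid X_1,\dots,X_k)$ associated with the given ordering of the variables and expose the coordinates one by one exactly as in the proof of Theorem~\ref{thm:McDGenExt}; the injections $\rho_k$ of the (GL) condition are used only to control the conditional \emph{ranges} of the martingale increments $\DY_k=Z_k-Z_{k-1}$, and the bad event is built coordinate-by-coordinate from the events that, conditioned on $X_1,\dots,X_{k-1}$, the current sample point leaves $\Gamma$ with probability exceeding $\gamma_k$. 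This already yields $\neg\cB\subseteq\Gamma$ and $\PP(\cB)\le\sum_k\gamma_k^{-1}\PP(X\notin\Gamma)$, i.e.\ \eqref{McDExt:PrB}, identically to before.

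The new point is the estimate of $\EE(e^{\lambda\DY_k}\mid X_1,\dots,X_{k-1})$ on the good part. Here I would use the asymmetry hypothesis $\max_{\eta}\PP(X_k=\eta\mid X_1,\dots,X_{k-1})\ge 1-p_k$: fix the most likely value $\eta^\ast$ of $X_k$ and write $\DY_k$ as a shifted version of itself so that it vanishes when $X_k=\eta^\ast$. On the event $\neg\cB$ the (GL) condition (via $\rho_k$, and via \eqref{eq:EfGTL} in Remark~\ref{rem:GTL} to absorb the $d_k$-contributions into an $e_k$-term) gives $|\DY_k|\le c_k+e_k\cdot(1-p_k)^{-1}=:b_k$ pointwise, while $\DY_k=0$ with conditional probability at least $1-p_k$; note the factor $(1-p_k)^{-1}$ enters precisely because the increment is measured relative to $\eta^\ast$, whose conditional probability is only $\ge 1-p_k$ rather than $=1$, so the `width' of the increment inflates by that factor. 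A standard bounded-random-variable MGF bound (the one behind Bernstein's inequality, cf.\ Remark~2.9 in \cite{JLR}) then gives $\EE(e^{\lambda\DY_k}\mid\cdots)\le\exp\!\bigl(\phi(\lambda b_k)\,p_k b_k^{-2}\cdot b_k^2\bigr)$ with $\phi(x)=e^x-1-x$, i.e.\ effectively variance proxy $p_k b_k^2$ rather than $b_k^2$. Multiplying over $k$, optimizing $\lambda$, and using $\phi(x)\le x^2/(2(1-x/3))$ produces exactly the denominator $2\sum_k p_k(c_k+e_k(1-p_k)^{-1})^2+2Ct/3$ with $C=\max_k(c_k+e_k)$; keeping $\phi$ instead gives the Bennett-type sharpening asserted in the final sentence.

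I expect the main obstacle to be bookkeeping rather than a genuinely new idea: one must verify that the decomposition of $\DY_k$ relative to the dominant value $\eta^\ast$ is compatible with the injection $\rho_k$ supplied by (GL) — in particular that, conditioned on $X_1,\dots,X_{k-1}$ and on $\neg\cB$, comparing the outcome $X_k$ with $\eta^\ast$ can be realized through the chain of bijections from Remark~\ref{rem:GTL} so that the pointwise bound $b_k$ is legitimate and the $e_k$-absorption of the $d_k$-terms (using $\PP(X\notin\Gamma\mid X\in\Sigma_a)\le\gamma_k$ off $\cB$) goes through with the stated constant. A secondary subtlety is the appearance of $(1-p_k)^{-1}$ only in the $e_k$-term and not multiplying $c_k$, which reflects that $c_k$ bounds the typical change directly whereas $e_k$ already carries the $d_k-c_k$ gap; I would handle this by splitting $\DY_k$ into a part bounded by $c_k$ almost surely and a part bounded by $d_k-c_k$ only with the small probability $\PP(X\notin\Gamma\mid\cdots)$, applying the Bernstein MGF bound to each piece, and recombining. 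Once these points are checked, the remaining computations — optimizing over $\lambda$ and invoking Remark~\ref{rem:McDExtB} for the $\phi$-form — are routine, and the deduction of the two-sided and $0$-$1$ specializations follows exactly as the earlier theorems are derived from one another.
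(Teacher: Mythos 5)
Your proposal is correct and follows essentially the same route as the paper: the paper also runs the stopped Doob martingale with the bad event \eqref{eq:Bad}, shifts the increment by an $\cF_{k-1}$-measurable most likely value $\beta$ (its $D_k$, which vanishes when $X_k=\beta$) to get $\Var(\DY_k\mid\cF_{k-1})\le p_k\bigl(c_k+e_k(1-p_k)^{-1}\bigr)^2$ via the (GL)-based estimate \eqref{eq:MBCI2GV} together with $\PP(X\notin\Gamma\mid\cF_{k-1},X_k=\beta)\le\gamma_k(1-p_k)^{-1}$ off the bad event, and then concludes by the Bernstein/Freedman-type martingale bound (Lemma~\ref{lem:MFr}), which is exactly the MGF computation you re-derive. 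One bookkeeping remark: to obtain the stated $C=\max_k(c_k+e_k)$ rather than the larger $\max_k b_k$ that your MGF step with range $b_k$ would give, keep the shifted bound only for the \emph{variance} and use the unshifted bound $\DY_k\le c_k+e_k$ from \eqref{eq:MBCGV} as the range in \eqref{eq:St}/Lemma~\ref{lem:MFr}; in particular no splitting-and-recombining of $\DY_k$ is needed, since the $(1-p_k)^{-1}$ enters through the one-line conditional-probability inequality above.
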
 
\begin{theorem}[`Two-sided general Lipschitz condition']\label{thm:McDGenfTL}
Theorems~\ref{thm:McDGenExt}, \ref{thm:McDGenV} and 
Remark~\ref{rem:GTL} remain valid with $e_k=2\gamma_k (d_k-c_k) q_k^{-1}$ 
and $\min_{\eta \in \Lambda_k} \PP(X_k=\eta \mid X_{1}, \ldots, X_{k-1}) \geq q_k$ 
if the Lipschitz condition \eqref{eq:fGTL} of (GL) is replaced by the 
following two-sided variant: 
\begin{equation}\label{eq:fGTLP}
|f(x)-f(\rho_k(\tilde{x}))| \; \le \; \begin{cases}
		c_k & \;\text{if $x,\rho_k(\tilde{x}) \in \Gamma$,}\\
		d_k & \;\text{otherwise.}
\end{cases}
\end{equation} 
In addition, $q_k \leq |\Lambda_k|^{-1}$ suffices when all possible 
outcomes occur with the same probability. 
\end{theorem}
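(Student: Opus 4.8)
The plan is to deduce Theorem~\ref{thm:McDGenfTL} from the proofs of Theorems~\ref{thm:McDGenExt} and~\ref{thm:McDGenV} (and of Remark~\ref{rem:GTL}) by turning the two-sided condition~\eqref{eq:fGTLP} into an averaged \emph{one-sided} Lipschitz bound, at the cost of the announced factor $2q_k^{-1}$ in the error term~$e_k$.

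First I would recall from Remark~\ref{rem:GTL} that each $\rho_k=\rho_k(\Sigma_a,\Sigma_b)$ is necessarily a bijection with equality in~\eqref{eq:PGTL}; this part of the argument concerns only the probability structure, so it is unaffected by replacing~\eqref{eq:fGTL} with~\eqref{eq:fGTLP}. Hence $\rho_k$ pushes forward the conditional law of $X$ given $X\in\Sigma_a$ to that of $X$ given $X\in\Sigma_b$, so that $\EE(f(\rho_k(X))\mid X\in\Sigma_a)=\EE(f(X)\mid X\in\Sigma_b)$ and $\PP(\rho_k(X)\notin\Gamma\mid X\in\Sigma_a)=\PP(X\notin\Gamma\mid X\in\Sigma_b)$. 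Estimating $|f(x)-f(\rho_k(x))|\le c_k+(d_k-c_k)(\indic{x\notin\Gamma}+\indic{\rho_k(x)\notin\Gamma})$ via~\eqref{eq:fGTLP} and taking $\EE(\,\cdot\mid X\in\Sigma_a)$ then yields the averaged two-sided bound
\[
\bigl|\EE(f(X)\mid X\in\Sigma_a)-\EE(f(X)\mid X\in\Sigma_b)\bigr|\le c_k+(d_k-c_k)\bigl(\PP(X\notin\Gamma\mid X\in\Sigma_a)+\PP(X\notin\Gamma\mid X\in\Sigma_b)\bigr).
\]

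Next I would feed this into the martingale $Y_k=\EE(f(X)\mid X_1,\dots,X_k)$: writing $Y_{k-1}$ as the $\PP(X_k=\cdot\mid X_1,\dots,X_{k-1})$-average of the quantities $\EE(f\mid X\in\Sigma_z)$ and applying the displayed bound to each pair $(a,z)$ gives, on $\{X_k=a\}$,
\[
|Y_k-Y_{k-1}|\le c_k+(d_k-c_k)\bigl(\PP(X\notin\Gamma\mid X\in\Sigma_a)+\PP(X\notin\Gamma\mid X_1,\dots,X_{k-1})\bigr).
\]
Since $\PP(X_k=a\mid X_1,\dots,X_{k-1})\ge q_k$ for the value $a$ actually taken, $\PP(X\notin\Gamma\mid X\in\Sigma_a)\le q_k^{-1}\PP(X\notin\Gamma\mid X_1,\dots,X_{k-1})$, and hence (using $q_k\le1$)
\[
|Y_k-Y_{k-1}|\le c_k+2(d_k-c_k)q_k^{-1}\,\PP(X\notin\Gamma\mid X_1,\dots,X_{k-1}).
\]
I would then take $\cB$ to consist of $\{X\notin\Gamma\}$ together with the events $\{\PP(X\notin\Gamma\mid X_1,\dots,X_{k-1})\ge\gamma_k\}$ for $k\in[N]$; a Markov bound on each of these plus a union bound — using $\gamma_1\le1$ together with the fact that the $k=1$ test is deterministic, so as to absorb the first event — gives~\eqref{McDExt:PrB} and $\neg\cB\subseteq\Gamma$. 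On $\neg\cB$ every increment satisfies $|Y_k-Y_{k-1}|<c_k+e_k$ with $e_k=2\gamma_k(d_k-c_k)q_k^{-1}$, which is exactly the increment bound that powers the proofs of Theorems~\ref{thm:McDGenExt} and~\ref{thm:McDGenV} and of Remark~\ref{rem:GTL} (the Azuma--Hoeffding estimate; the Bernstein-type estimate, together with the conditional-variance bookkeeping producing the $p_k$ and $(1-p_k)^{-1}$ factors; and the refinements for conditionally at-most-two-valued coordinates and for Bennett's sharpening). Feeding this bound into those arguments reproduces all of their conclusions verbatim with the new~$e_k$. For the final addendum I would note that the bijections $\rho_k$ force all nonempty $\Sigma_z$ attached to a fixed prefix to have equal size, so under the uniform measure $\PP(X_k=\eta\mid X_1,\dots,X_{k-1})$ equals $1/m$ with $m\le|\Lambda_k|$ the number of feasible values of $X_k$; since the argument only uses this lower bound for values $\eta$ that actually occur, any $q_k\le|\Lambda_k|^{-1}$ is admissible.

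The point requiring the most care is that the `bad' event must control the \emph{prefix}-conditional probability $\PP(X\notin\Gamma\mid X_1,\dots,X_{k-1})$ rather than the $\Sigma$-conditional probability used in the one-sided proof; this change of conditioning is precisely what forces the extra factor $q_k^{-1}$, while the factor $2$ accounts for the two ways ($x\notin\Gamma$ or $\rho_k(x)\notin\Gamma$) in which~\eqref{eq:fGTLP} can revert to the weak bound~$d_k$. Verifying that this redefined $\cB$ still obeys~\eqref{McDExt:PrB} is where $\gamma_k\in(0,1]$ enters; beyond that the work is routine, since once $|Y_k-Y_{k-1}|<c_k+e_k$ holds on $\neg\cB$ all the tail estimates are inherited as black boxes from the results already established. (The alternative noted after Theorem~\ref{thm:McDfTLP} — shrinking $\Gamma$ to a suitable $\Gamma'\subseteq\Gamma$ — instead keeps $e_k=\gamma_k(d_k-c_k)$ but inflates $\PP(\cB)$ by roughly a factor $\sum_k q_k^{-1}$, and so realises a different trade-off than Theorem~\ref{thm:McDGenfTL}.)
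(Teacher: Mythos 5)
Your proposal is correct and follows essentially the same route as the paper: you re-derive the two-sided analogues of \eqref{eq:MBCIM} and \eqref{eq:MBCM} in the (GL) setting (using that $\rho_k$ is a measure-preserving bijection, bounding the realized coordinate's conditional probability $\PP(X\notin\Gamma\mid X\in\Sigma_a)$ by $q_k^{-1}\PP(X\notin\Gamma\mid\cF_{k-1})$ and averaging the other, with $1+q_k^{-1}\le 2q_k^{-1}$ giving the factor $2$), keep the same bad event defined via \eqref{eq:Bad}, and feed the resulting increment bound into the stopped-martingale machinery of Theorems~\ref{thm:McDGenExt} and~\ref{thm:McDGenV}, exactly as the paper does by carrying over the proof of Theorem~\ref{thm:McDfTLP}. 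Your equal-fiber-size argument for the uniform-measure addendum ($q_k\le|\Lambda_k|^{-1}$) likewise matches the paper's proof.
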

The sufficient condition $q_k \leq |\Lambda_k|^{-1}$ often makes the 
two-sided Lipschitz condition of Theorem~\ref{thm:McDGenfTL} easy to 
apply. For example, in case of random permutations $\pi \in S_n$ and 
random graphs $G_{n,m}$ (or the random graph process) we may take 
$q_k=n^{-1}$ and $q_k=n^{-2}$, respectively.

\subsection{Discussion and applications} 

\subsubsection{A wider perspective} 
As discussed, in probabilistic combinatorics and the analysis of 
randomized algorithms we frequently need to prove that a random 
function is not too far from its mean, e.g., that $f(X) \approx \mu$ 
or $f(X) \leq 2 \mu$ holds. 
A common feature of many recent applications is that the functions of 
interest are only `smooth enough' on a \emph{high probability} event, 
whereas their \emph{deterministic} worst case changes are too large 
for the standard bounded differences inequality 
(Theorem~\ref{thm:McD}) to be effective.

In these cases there is no general method, but in the past certain 
ad-hoc arguments have been successfully used in such situations (see 
e.g.~\cite{Bollobas1988Chrom,BollobasBrightwell1992,KimVu2004,KrivelevichLubetzkySudakov2012,RiordanWarnke2012AP}). 
Usually the key idea is to construct a random function $g(X)$ that is 
a \emph{smooth approximation} of $f(X)$, which in particular by 
definition ensures that the Lipschitz coefficients are always small 
(approximation usually means that $f(X) \approx g(X)$ holds with high 
probability, but often $\EE f(X) \approx \EE g(X)$ is also needed). 
Here smoothness makes it possible to apply concentration inequalities 
(often the bounded differences inequality) to $g(X)$, whereas the 
approximation property ensures that concentration transfers from 
$g(X)$ to $f(X)$. The main disadvantage of this approach is that it 
relies on ad-hoc arguments (which can be involved); in particular, 
finding suitable approximation functions may require ingenuity.

One aim of this paper is to provide easy-to-apply tools which can 
\emph{routinely} deal with such situations, establishing 
concentration in a rather simple way. 
For example, in the frequent case where the good event $\Gamma$ holds 
with probability at least $1-N^{-\omega(1)}$ we can typically choose 
$\gamma_k^{-1}=\max |f(X)|$ and then completely ignore the worst case 
effects, see e.g.\ the proof of Theorem~\ref{thm:Hcon} 
(this approach also applies, for example, to Lemma 14 in~\cite{RiordanWarnke2012AP} 
and parts of the martingale-based proof of Theorem 2.2 in~\cite{KrivelevichLubetzkySudakov2012}). 
In other words, the crucial advantage of our new inequalities is that 
they can often remove the need for sometimes difficult ad-hoc 
arguments using only a minimum amount of calculations (which 
typically even coincide with heuristic considerations).

\subsubsection{Comparison with Janson's inequality}\label{sec:Janson}
In this section we demonstrate that in certain applications our 
inequalities give exponential estimates that (i) are tight and 
(ii) successfully compete with the well known Janson's inequality. 
To this end we focus on subgraph counts in the binomial random graph 
$G_{n,p}$ since a concrete example seems more illustrative to us. 
Henceforth we assume that $H$ is a fixed \emph{$2$-balanced} graph, 
i.e., where $H$ has $e_H \geq 2$ edges and all its proper subgraphs 
$G \subsetneq H$ with $v_G \geq 3$ vertices satisfy 
\begin{equation}\label{eq:d2}
\frac{e_G-1}{v_G-2} \leq \frac{e_H-1}{v_H-2} = d_2(H) . 
\end{equation}
This class of graphs includes, for example, complete graphs and 
cycles of arbitrary size. Let $Y_H$ count the number of $H$ copies 
in $G_{n,p}$. For $2$-balanced graphs it is well-known (see 
e.g.~\cite{JLR}) that Janson's inequality gives
\begin{equation}\label{eq:JansonEx}
\PP(Y_H \leq \mu -t ) \leq \exp\left(-\Theta\left(\frac{t^2}{\mu^2/(n^2p)}\right)\right) 
\end{equation}
for $p \geq n^{-1/d_2(H)}$. 
Spencer~\cite{Spencer1990} proved that, assuming 
$p \geq n^{-1/d_2(H)} (\log n)^b$ with $0 < b=b(H) < 2$, for every 
$c>0$ the following holds with probability at least $1-n^{-c}$ for 
$n \geq n_0(c,H)$: every pair $xy$ of vertices is contained in at 
most $\Delta=O(n^{v_H-2}p^{e_H-1})$ extensions to copies of $H$ (for 
which adding the edge $xy$ completes a copy of $H$ containing $xy$). 
The latter event will be our decreasing~$\Gamma$, which allows us to 
use $c_k=\Theta(n^{v_H-2}p^{e_H-1})=\Theta(\mu/(n^2p))$ as well as 
$d_k=n^{v_H}$ and $\gamma_k=n^{-v_H}$ in our typical bounded 
differences inequality, so that $e_k=\gamma_k (d_k-c_k) =o(c_k)$. 
Applying Spencer's result with $c=v_H+3$ we have 
$\PP(\cB) \leq n^{-1}$ by~\eqref{McDExt:PrB}. Note that, since for 
the lower tail we have $t=O(\mu)$, it follows that 
$\sum_{k}pc_k^2 + t \max_k c_k = \Theta(\mu^2/(n^2p))$. 
For the decreasing function $f=-Y_H$ a combination of 
\eqref{McDExtVMon:Pr} and \eqref{McDExtV:Pr} now yields 
\[
\PP(Y_H \leq \mu -t ) \le \exp\left(-\Theta\left(\frac{t^2}{\mu^2/(n^2p)}\right)\right) ,
\]
which asymptotically matches~\eqref{eq:JansonEx}, i.e., the estimate 
of Janson's inequality. In fact, for $t=\Theta(\mu)$ this bound is 
best possible (up to constants in the exponent) since $G_{n,p}$ 
contains no edges (and thus no copies of $H$) with probability 
$e^{-\Theta(n^2p)}$.

\subsubsection{Application: the reverse $H$-free process}\label{sec:HfreeIntro}
The following variations of the classical random graph processes were 
proposed by Bollob{\'a}s and Erd{\H o}s at the 1990 Quo Vadis, Graph 
Theory conference in an attempt to improve Ramsey numbers~\cite{Bollobas2012PC,BollobasRiordan2008}. 
The \emph{$H$-free process}, where, starting with an empty graph on 
$n$ vertices, in each step a new edge is added, chosen uniformly at 
random from all pairs whose addition does not complete a copy of $H$. 
The \emph{reverse $H$-free process}, where, starting with a complete 
graph on $n$ vertices, in each step an edge is removed, chosen 
uniformly at random from all edges that are contained in a copy of $H$. 
The \emph{$H$-removal process}, where, starting with a complete graph 
on $n$ vertices, in each step all $e_H$ edges of a copy of $H$ are 
removed, which is selected uniformly at random from all $H$ copies. 
All of these processes end with an $H$-free graph, and Bollob{\'a}s 
and Erd{\H o}s asked (among other structural properties) what their 
typical final number of edges is~\cite{Bollobas2012PC,BollobasRiordan2008}.

These variations have received considerable attention in recent years, 
in particular the $H$-free process. Its typical final number of edges 
is nowadays known up to logarithmic factors~\cite{BohmanKeevash2010H,OsthusTaraz2001} 
for the class of \emph{strictly $2$-balanced} graphs $H$, where in 
\eqref{eq:d2} the inequality is strict. Matching bounds up to constant 
factors have only been established for some special forbidden graphs 
and the class of $C_{\ell}$-free processes, see e.g.~\cite{Warnke2010Cl,Warnke2010K4}. 
The final graph of the $K_s$-free process also yields the best known 
lower bounds on the Ramsey numbers $R(s,t)$ with $s \geq 4$, 
see~\cite{Bohman2009K3,BohmanKeevash2010H}. 
Recently Makai~\cite{Makai2012} determined the (asymptotic) final 
number of edges of the reverse $H$-free process for the class of 
strictly $2$-balanced graphs, but its final graph yields no new 
estimates for $R(s,t)$. 
Although the related $H$-removal process has been studied in several 
papers the final number of edges is known up to multiplicative 
$n^{o(1)}$ factors only in the special case $H=K_3$, see 
e.g.~\cite{BohmanFriezeLubetzky2012,Roedlthoma1996,Spencer1995}.

Using our typical bounded differences inequality, in 
Section~\ref{sec:Hfree} we show that the final number of edges in the 
reverse $H$-free process is sharply concentrated when $H$ is 
$2$-balanced (we do not assume \emph{strictly} $2$-balanced), and 
also determine the likely number of edges up to constants. 
This is in contrast to all known results for the widely studied 
$H$-free and $H$-removal processes. Indeed, in these 
(a) no sharp concentration results are known, 
(b) the order of magnitude of the final number of edges is open for 
most strictly $2$-balanced graphs, and 
(c) no general results apply to the class of $2$-balanced graphs. 
As we shall see, when $H$ is a matching the expected final number of 
edges in the reverse $H$-free process is $\Theta(1)$. 
When it comes to concentration we thus restrict our main attention to 
all other $2$-balanced graphs $H$, which in fact satisfy 
$d_2(H) \geq 1$ (with equality for trees). Here our next result shows 
that the reverse $H$-free process typically ends with 
$\Theta(n^{2-1/d_2(H)})$ edges, answering (up to constant factors) 
the aforementioned question of Bollob{\'a}s and Erd{\H o}s from 1990. 
\begin{theorem}\label{thm:revH}
Let $H$ be a $2$-balanced graph. There are constants $a,A>0$ such 
that the final number of edges $M_n$ in the reverse $H$-free process 
has expectation satisfying $a n^{2-1/d_2(H)} \leq \EE M_n \leq A n^{2-1/d_2(H)}$. 
Furthermore, for any $c > 0$ we have $|M_n-\EE M_n| \leq \sqrt{\EE M_n} (\log n)^{4 e_H}$ 
with probability at least $1-n^{-c}$ for $n \geq n_0(c,H)$. 
\end{theorem}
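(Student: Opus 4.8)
The plan is to prove Theorem~\ref{thm:revH} in two separate parts: first establishing the order of magnitude of $\EE M_n$, and then using the typical bounded differences inequality (Theorem~\ref{thm:McDExt}) to obtain concentration around the mean. For the expectation bounds, I would analyze the reverse $H$-free process directly. The key heuristic is that the process removes edges until no copy of $H$ remains, and that the ``typical'' density at which an $H$-free graph first appears is governed by $d_2(H)$ exactly as in the $H$-free process and in random graph thresholds. For the upper bound $\EE M_n \le A n^{2-1/d_2(H)}$, I would argue that once the edge density drops below roughly $n^{-1/d_2(H)}$, with high probability every remaining edge still lies in a copy of $H$ (a first-moment/second-moment argument showing copies of $H$ are still abundant and well-spread), so the process cannot stop too early; conversely, for the lower bound $\EE M_n \ge a n^{2-1/d_2(H)}$, I would show that slightly above this density there are few enough copies of $H$ that the process is forced to terminate before removing too many edges — here one controls the number of copies of $H$ and the overlap structure using the $2$-balanced condition \eqref{eq:d2}, which ensures no denser subgraph dominates the count. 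One must also handle the degenerate matching case separately, where $\EE M_n = \Theta(1)$, and note that all other $2$-balanced $H$ have $d_2(H) \ge 1$.

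For the concentration statement, the strategy is to expose the randomness of the process as a sequence of independent (or nearly independent) random choices $X = (X_1, X_2, \ldots)$ — at each step the index of the removed edge among those currently in a copy of $H$ — and write $M_n = f(X)$. The worst-case Lipschitz coefficients here are large: changing one early choice can cascade and alter the final graph substantially, so the plain bounded differences inequality (Theorem~\ref{thm:McD}) is useless. The point is to introduce a good event $\Gamma$ asserting that throughout the process the graph is ``well-behaved'': every pair of vertices lies in at most $\Delta = O(n^{v_H-2}p^{e_H-1})$-type number of partial copies / extensions of $H$ at the relevant densities, degrees are close to expectation, and more generally the local structure that controls how a single changed choice propagates is under control. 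On $\Gamma$ one expects the typical Lipschitz coefficient $c_k$ to be polylogarithmic (or at worst $n^{o(1)}$) — a single altered removal changes the set of ``$H$-containing edges'' only locally and only for a controlled number of subsequent steps — while the worst-case $d_k$ is at most $\binom{n}{2}$. Taking $\gamma_k = n^{-4}$ (or $\gamma_k^{-1} = \max|f|$ as suggested in the ``wider perspective'' discussion) makes $e_k = \gamma_k(d_k - c_k) = o(c_k)$ negligible, and $\PP(X \notin \Gamma) \le n^{-\omega(1)}$ makes $\PP(\cB)$ negligible; then \eqref{McDExt:Pr} with $t = \sqrt{\EE M_n}(\log n)^{4e_H}$ and $\sum_k c_k^2 \le (\text{number of steps}) \cdot c^2 = O(\EE M_n \cdot (\log n)^{O(e_H)})$ yields the claimed $n^{-c}$ bound after absorbing $\PP(\cB)$.

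I would carry out the steps in the following order. Step 1: set up the process as a function of independent choices and identify the number of steps (which is $\binom{n}{2} - M_n$, itself random — so I would instead run the process for a fixed large number of steps and use a stopping-time/truncation argument, or equivalently pad the sequence using the convention that indices repeat once $f(X)$ is determined, as in Section~\ref{sec:AQ}). Step 2: prove the expectation bounds via first- and second-moment estimates on the number of copies of $H$ at densities near $n^{-1/d_2(H)}$, crucially invoking \eqref{eq:d2} to bound the variance contribution from overlapping copies. Step 3: define the good event $\Gamma$ precisely and prove $\PP(X \notin \Gamma) \le n^{-\omega(1)}$ — this requires a union bound over all steps and over all vertex pairs / small subgraphs, using Chernoff- and Janson-type estimates uniformly along the process. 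Step 4: verify the typical Lipschitz condition (TL) — bound $|f(x) - f(\tilde{x})| \le c_k$ with $c_k \le (\log n)^{O(e_H)}$ on $\Gamma$ by a coupling argument tracking how one changed choice propagates. Step 5: apply Theorem~\ref{thm:McDExt} with the parameters above, and similarly for $-f$, then combine.

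The main obstacle will be Step 4: bounding the typical Lipschitz coefficient. Unlike the clean triangle-counting example, here changing one coordinate changes which edge is removed at step $k$, which changes the graph from that point on, which changes the set of ``$H$-containing edges'' at every subsequent step, potentially altering all later choices. I expect one must argue that on the good event $\Gamma$ this perturbation stays ``small'': the two processes can be coupled so that the symmetric difference of their edge sets remains of polylogarithmic size for the remainder of the process (because each step the discrepancy can grow only by the number of copies of $H$ through a single edge, which is controlled on $\Gamma$), and hence the final edge counts differ by at most $(\log n)^{O(e_H)}$. Making this coupling precise — in particular showing the discrepancy does not blow up over the $\Theta(n^2)$ remaining steps — is the technical heart of the argument, and is exactly the kind of ad-hoc smoothing that Theorem~\ref{thm:McDExt} is designed to replace, except that here the ``typical smoothness'' itself still requires a genuine (though self-contained) combinatorial argument. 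The $(\log n)^{4e_H}$ exponent in the theorem statement is a strong hint that the propagation analysis produces polylogarithmic factors with an explicit dependence on $e_H$.
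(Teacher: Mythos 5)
There is a genuine gap, and it is precisely at the point you flag as the ``technical heart'': your parametrization of the randomness by the sequential removal choices cannot deliver the claimed bound, and the missing ingredient is the Erd\H{o}s--Suen--Winkler reformulation that the paper's proof is built on. The paper first observes that one may traverse all $\binom{n}{2}$ edges in random order and delete each edge if and only if it lies in a copy of $H$ among the \emph{later} edges; reversing, $e_i$ is kept iff it completes no copy of $H$ with $e_1,\ldots,e_{i-1}$, and crucially this decision depends only on the \emph{set} of earlier-traversed edges, not on which of them were kept. This kills the cascading you are worried about: on the good event $\cD$ (Spencer's extension-counting bound, Lemma~\ref{lem:DI}), changing one edge of the sequence or transposing two edges alters the decisions of at most $2e_H(\log n)^{2e_H}$ later edges, so the two-sided general Lipschitz condition of Theorem~\ref{thm:McDGenfTL} (the randomness is a random ordering, so one needs the permutation version Theorem~\ref{thm:McDGenExt}, not Theorem~\ref{thm:McDExt} with independent coordinates) is verified in two lines, with no coupling of two evolving processes. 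In your formulation, by contrast, the two processes' sets of ``$H$-covered'' edges diverge after one changed choice, and your claim that the discrepancy stays polylogarithmic because it ``grows only by the number of copies of $H$ through a single edge'' per step is not a bound at all over the $\Theta(n^2)$ remaining steps; making such a coupling work is exactly the ad-hoc smoothing the reformulation avoids.

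There is also a quantitative gap even granting a polylogarithmic typical Lipschitz constant: the reverse process performs $\binom{n}{2}-M_n=\Theta(n^2)$ removals, so with removal choices as coordinates $\sum_k c_k^2=\Theta(n^2\,\mathrm{polylog}\,n)$, and with $t=\sqrt{\EE M_n}(\log n)^{4e_H}$ the exponent in \eqref{McDExt:Pr} is $O(n^{-1/d_2(H)}\mathrm{polylog}\,n)=o(1)$ --- a trivial bound. Your line ``$\sum_k c_k^2 \le (\text{number of steps})\cdot c^2 = O(\EE M_n(\log n)^{O(e_H)})$'' silently assumes only $O(\EE M_n\,\mathrm{polylog}\,n)$ relevant coordinates; the paper earns this via Lemma~\ref{lem:Cpl}: on the event $\cI$ every pair of vertices already has an $H$-extension in $G_{n,m}$ with $m=n^{2-1/d_2(H)}(\log n)^2$, so no edge after the first $m$ (in the reversed order) is ever added, and one may condition the whole argument on these $m$ coordinates. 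Finally, for the expectation your plan to run first/second-moment arguments on ``the graph at density $n^{-1/d_2(H)}$'' ignores that the process graph is not distributed as $G_{n,m}$; the paper again uses the reformulation (with independent birth times) to write the survival probability of a fixed edge $e$ as $\PP(Y_{e,H,q}=0)$ in the genuine binomial graph $G_{n,q}$, then applies Harris' inequality for the lower bound and Janson's inequality plus integration over $q$ for the upper bound, giving $\PP(\cZ_e)=\Theta(n^{-1/m_2(H)})$ per edge. (A smaller point: the good event here holds only with probability $1-n^{-c}$ for each constant $c$, not $1-n^{-\omega(1)}$ as you assume; this still suffices, but your ``ignore the bad event entirely'' step needs the polynomial bookkeeping the paper does with $\gamma_k=n^{-4}$ and $\sum_k\gamma_k^{-1}\le n^6$.)
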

Our arguments partially generalize to arbitrary graphs. Set 
$d_2(K_2)=1/2$ and $m_2(H)=\max_{G \subseteq H,e_G \geq 1}d_2(G)$, so 
that $m_2(H)=d_2(H)$ for $2$-balanced graphs $H$. We show that for 
any graph the expected final number of edges in the reverse $H$-free 
process is $\Theta(n^{2-1/m_2(H)})$, and prove concentration under 
certain conditions (satisfied e.g.\ by a clique $K_r$ with an extra 
edge hanging off), see Section~\ref{sec:Hfree}. 
The proof of Theorem~\ref{thm:revH} also extends to a finite family 
of forbidden graphs $\cH$, which for the $H$-free process was 
considered in~\cite{OsthusTaraz2001}. Indeed, defining the reverse 
$\cH$-free process in the obvious way (always removing a random edge 
that is contained in a copy of some $H \in \cH$) we obtain, for 
example, the following generalization. 
\begin{theorem}\label{thm:revFH}
Let $\cH=\{H_1, \ldots, H_r\}$ be a family of $2$-balanced graphs. 
Define $H,J \in \cH$ such that $d_2(H) = \min_{F \in \cH}d_2(F)$ and 
$e_J = \max_{F \in \cH}e_F$. There are constants $a,A>0$ such that 
the final number of edges $M_n$ in the reverse $\cH$-free process has 
expectation satisfying $a n^{2-1/d_2(H)} \leq \EE M_n \leq A n^{2-1/d_2(H)}$. 
Furthermore, for any $c > 0$ we have $|M_n-\EE M_n| \leq \sqrt{\EE M_n} (\log n)^{4 e_J}$ 
with probability at least $1-n^{-c}$ for $n \geq n_0(c,\cH)$. 
\end{theorem}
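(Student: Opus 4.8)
The plan is to reduce the family version to the single-graph analysis underlying Theorem~\ref{thm:revH}, treating $H$ (the member minimising $d_2$) as the ``bottleneck'' and $J$ (the member maximising $e_F$) only as the source of the logarithmic error exponent. First I would set up the reverse $\cH$-free process as a function of independent randomness: at step $i$ we delete a uniformly random edge among those currently lying in a copy of some $F\in\cH$, which can be encoded by a sequence of i.i.d.\ uniform choices, so that the final edge count $M_n$ is a function $f(X)$ of independent random variables and Theorems~\ref{thm:McDExt}--\ref{thm:McDExtP} apply. The process terminates once the graph is $\cH$-free, i.e.\ simultaneously $H_1$-free, \dots, $H_r$-free. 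Since an edge is removable as long as \emph{any} $H_i$-copy contains it, the dominant constraint on how long the process runs is the one coming from the sparsest forbidden graph; heuristically the process should stop at density comparable to where the $H$-free process stops, giving the $\Theta(n^{2-1/d_2(H)})$ prediction.

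Next I would establish the expectation bounds $an^{2-1/d_2(H)}\le \EE M_n\le An^{2-1/d_2(H)}$. For the lower bound one argues that as long as the current graph $G$ has at least $An^{2-1/d_2(H)}$ edges it is ``locally dense enough'' that every $F\in\cH$ still has copies through a positive fraction of edges, so the process keeps running; more carefully, one shows that a random graph of that density whp contains, for every $F\in\cH$, a copy through $\omega(1)$-many edges, hence the process cannot have terminated --- this is where $2$-balancedness of each $H_i$ is used, to control copy counts and extension counts via standard first/second moment (and Janson-type) estimates. For the upper bound, one shows that once the density drops to $an^{2-1/d_2(H)}$ for small $a$, whp the graph is already $H$-free (using that subcritical $G_{n,m}$-type graphs contain no copies of the densest-relevant subgraph of $H$), and a fortiori $\cH$-free is reached no later; here one must be slightly careful that the process's graph is not exactly $G_{n,m}$ but is ``$m$-quasirandom'' in the relevant sense, which is precisely the kind of self-correcting estimate handled by the typical bounded differences machinery. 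The family aspect enters only through ``some $F\in\cH$'', and since $\cH$ is finite a union bound over the $r$ graphs costs nothing.

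For the concentration statement I would run the same argument as for Theorem~\ref{thm:revH} but with the good event $\Gamma$ now asserting, for \emph{all} $F\in\cH$ simultaneously, the relevant extension/copy-count bounds (every pair of vertices lies in at most $O(n^{v_F-2}p^{e_F-1})$ $F$-extensions at the relevant scales, and analogous bounds for larger substructures). Deleting one coordinate of the randomness (rerouting one edge-removal) changes $M_n$ by at most the ``typical'' amount $c_k=(\log n)^{O(1)}$ on $\Gamma$ --- because on $\Gamma$ the set of edges whose removability status can flip is controlled by extension counts --- while the worst-case change $d_k$ is only $O(n^2)$; taking $\gamma_k=n^{-O(1)}$ makes $e_k=\gamma_k(d_k-c_k)=o(1)$ and $\PP(\cB)\le n^{-\omega(1)}$ by \eqref{McDExt:PrB}. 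Plugging $\sum_k(c_k+e_k)^2 = \tilde O(\EE M_n)$ into \eqref{McDExt:Pr} then yields $\PP(|M_n-\EE M_n|\ge \sqrt{\EE M_n}(\log n)^{4e_J}\text{ and }\neg\cB)\le n^{-\omega(1)}$, and the $(\log n)^{4e_J}$ (rather than $4e_H$) exponent appears precisely because the Lipschitz coefficient $c_k$ is governed by the \emph{largest} forbidden graph $J$, whose extensions involve up to $e_J$ edges and hence up to $(\log n)^{O(e_J)}$ fluctuations in the relevant quasirandomness bounds.

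The main obstacle I expect is controlling the Lipschitz coefficient $c_k$ on the good event --- i.e.\ proving that a single altered random choice propagates only a polylogarithmic change to the final edge count. Unlike a one-shot subgraph count, here $M_n$ is the output of a long adaptive process, so one must show that the process is ``stable'': a single different removal early on does not cascade into a macroscopically different trajectory. The standard way to handle this is a coupling argument showing the two trajectories (original and perturbed) stay close --- their symmetric difference grows slowly and is always dominated by extension counts, which are bounded on $\Gamma$ --- so that at termination they differ in only $(\log n)^{O(e_J)}$ edges. Making this coupling precise for a \emph{family} $\cH$ (where removability depends on copies of \emph{any} $H_i$) is the technical heart of the proof; everything else is a union bound over the finitely many members of $\cH$ plus the single-graph estimates already needed for Theorem~\ref{thm:revH}.
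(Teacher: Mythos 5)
There is a genuine gap, and it is exactly the point you flag as ``the technical heart'': under your encoding (an i.i.d.\ choice at each step of which currently removable edge to delete), the function $M_n=f(X)$ is the output of an adaptive trajectory, and you give no proof that a single altered choice changes the final edge count by only $(\log n)^{O(1)}$ --- you only assert that a coupling ``should'' keep the symmetric difference polylogarithmic. That claim is not routine: changing one choice typically relabels the removable set at every later step, and controlling the cascade over $\Theta(n^2)$ steps is the kind of dynamic self-correction analysis that is precisely what is open for the related $H$-removal process. The paper avoids the cascade altogether by a structural reformulation (Erd\H{o}s--Suen--Winkler, as in Makai): the final graph of the reverse $\cH$-free process is obtained by traversing the edges in uniformly random order (equivalently, independent birth times) and keeping $e_i$ if and only if it completes no copy of any $F\in\cH$ among $e_1,\ldots,e_{i-1}$, \emph{irrespective of which of those were kept}. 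Thus each edge's fate depends only on the underlying uniform random graph at its birth time, there is no adaptivity, and on the event $\cD$ of Lemma~\ref{lem:DI} (adapted to bound, for every $F\in\cH$, the number of $F$-extensions through each pair, with $\cI$ required only for the distinguished $H$) a one-edge change or transposition of the sequence affects at most $\sum_{F\in\cH} e_F\Psi_F=(\log n)^{O(e_J)}$ edge-decisions. Concentration then follows by a direct application of the permutation version (Theorem~\ref{thm:McDGenExt} with the two-sided condition of Theorem~\ref{thm:McDGenfTL}), exactly as in Theorem~\ref{thm:Hcon}; note your product-space Theorems~\ref{thm:McDExt}--\ref{thm:McDExtP} do not apply to the random-order representation without this general-Lipschitz machinery.

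The expectation bounds are a second gap. Your density-threshold sketch relies on the evolving process graph being ``$m$-quasirandom'', which is unproven (and not something the typical bounded differences inequality supplies), and the two directions are interchanged: showing the process cannot yet have terminated at high density bounds $M_n$ from \emph{above}, while ``already $\cH$-free at low density'' would be the \emph{lower}-bound ingredient. The paper instead computes, via the birth-time representation, the survival probability of a single edge $e$: conditioned on $B_e=q$ it equals $\PP(Y_{e,\cH,q}=0)$ in the genuine $G_{n,q}$, bounded below by Harris' inequality for $q\le n^{-1/d_2(H)}$ (avoiding all $F\in\cH$ simultaneously, using $d_2(H)\le d_2(F)$ as in \eqref{eq:EE:LB}) and above by Janson's inequality applied to the distinguished $H$ alone as in \eqref{eq:EE:UB}; linearity of expectation then gives $\EE M_n=\Theta(n^{2-1/d_2(H)})$. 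Your identification of $H$ as the bottleneck and of $J$ as governing the polylogarithmic exponent is correct in spirit, but without the reverse-order/birth-time representation neither the Lipschitz bound nor the expectation estimates are established.
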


\subsection{Organization of the paper}
Section~\ref{sec:proof} is devoted to the proof of our new 
concentration inequalities, which are then illustrated by an 
application to the $H$-free process in Section~\ref{sec:Hfree}.

\section{Proofs of the concentration inequalities}\label{sec:proof} 
We start by proving two general martingale inequalities. These are 
applied in Section~\ref{sec:BDI}, where we establish our variants of 
the bounded differences inequality.

\subsection{Martingale inequalities}\label{sec:Mart}
Our concentration results are based on the following variants of 
Hoeffding--Azuma/Bernstein-type martingale inequalities. Since they 
are not stated exactly in this form in the literature, we give short 
proofs for the readers convenience (following the slick approach of 
Freedman~\cite{Freedman1975}). In both we assume that 
$(\cF_k)_{0 \leq k \leq N}$ is an increasing sequence of 
$\sigma$-algebras, and $(M_k)_{0 \leq k \leq N}$ is an 
$(\cF_k)_{0 \leq k \leq N}$-adapted bounded martingale. 
\begin{lemma}[`Bounded differences martingale inequality']\label{lem:MAH}
Let $L_k$ and $U_k$ be $\cF_{k-1}$-measurable variables satisfying 
$L_k \leq M_k-M_{k-1} \leq U_k$. Set $S_k = \sum_{i \in [k]} (U_i-L_i)^2$. 
For every $t \ge 0$ and $S > 0$ we have 
\begin{equation}\label{eq:MAH}
\PP(\text{$M_k \ge M_0+t$ and $S_k \le S$ for some $k \in [N]$}) \le e^{-2t^2/S} .
\end{equation}
\end{lemma}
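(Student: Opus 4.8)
The plan is to use the exponential-supermartingale method of Freedman. Fix $\lambda > 0$ and put $S_0=0$, so that $S_k=\sum_{i \in [k]}(U_i-L_i)^2$ for all $0 \le k \le N$. I would introduce the process
\[
Z_k \;=\; \exp\bigl(\lambda(M_k-M_0)-\lambda^2 S_k/8\bigr),
\]
which satisfies $Z_0=1$, and first check that $(Z_k)_{0 \le k \le N}$ is an $(\cF_k)$-supermartingale. The key input is the conditional form of Hoeffding's lemma: if $D$ satisfies $\EE(D \mid \cF_{k-1})=0$ and $a \le D \le b$ for $\cF_{k-1}$-measurable bounds $a \le b$, then $\EE(e^{\lambda D} \mid \cF_{k-1}) \le \exp\bigl(\lambda^2(b-a)^2/8\bigr)$. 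Applying this with $D=M_k-M_{k-1}$ (conditional mean zero, since $M$ is a martingale) and $a=L_k$, $b=U_k$, and using that $S_k-S_{k-1}=(U_k-L_k)^2$ is $\cF_{k-1}$-measurable, I obtain
\[
\EE(Z_k \mid \cF_{k-1}) \;=\; Z_{k-1}\,e^{-\lambda^2(U_k-L_k)^2/8}\;\EE\bigl(e^{\lambda(M_k-M_{k-1})} \mid \cF_{k-1}\bigr) \;\le\; Z_{k-1}.
\]

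Next I would pass to a stopping time. Since each $U_i,L_i$ is $\cF_{i-1}$-measurable, the variable $S_k$ is $\cF_{k-1}$-measurable and $(S_k)$ is non-decreasing; hence $\tau := \min\{k \in [N] : M_k \ge M_0+t \text{ and } S_k \le S\}$ is a stopping time, where $\tau := N$ if no such $k$ exists. Optional stopping for the supermartingale $(Z_k)$ over the finite horizon $[N]$ gives $\EE Z_\tau \le \EE Z_0 = 1$. Writing $\cE$ for the event whose probability appears on the left of \eqref{eq:MAH}, on $\cE$ we have $M_\tau-M_0 \ge t$ and $S_\tau \le S$ at time $\tau$, so $Z_\tau \ge \exp(\lambda t-\lambda^2 S/8)$; since $Z_\tau \ge 0$ always, this yields $\PP(\cE)\exp(\lambda t-\lambda^2 S/8) \le \EE Z_\tau \le 1$, i.e.\ $\PP(\cE) \le \exp(-\lambda t+\lambda^2 S/8)$. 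Assuming $t>0$ (the case $t=0$ being trivial) and optimizing, the choice $\lambda = 4t/S>0$ makes the exponent equal to $-2t^2/S$, which is precisely \eqref{eq:MAH}.

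I do not expect a serious obstacle; the only point needing care is the predictability of $S_k$. It is exactly because $U_i,L_i$ are $\cF_{i-1}$-measurable that $S_k$ can be pulled out of the conditional expectation in the supermartingale computation, and that $\{S_k \le S\}$ is $\cF_k$-measurable, so that $\tau$ is genuinely a stopping time. Boundedness of $(M_k)$ enters only to guarantee that the expectations above are finite; over the finite index set $[N]$ no further integrability considerations are needed for optional stopping.
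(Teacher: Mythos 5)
Your proposal is correct and follows essentially the same route as the paper: the exponential supermartingale $e^{\lambda(M_k-M_0)-\lambda^2 S_k/8}$ built from the conditional Hoeffding lemma, a stopping-time/optional-stopping (Markov) step, and the optimal choice $\lambda=4t/S$. The only cosmetic difference is that the paper's stopping time triggers on $M_k\ge M_0+t$ alone and uses monotonicity of $S_k$ to get $S_T\le S$ on the event in question, whereas you build the condition $S_k\le S$ into $\tau$ directly; both are valid.
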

\begin{lemma}[`Bounded variances martingale inequality']\label{lem:MFr}
Let $U_k$ be an $\cF_{k-1}$-measurable variable satisfying 
$M_k-M_{k-1} \leq U_k$. Set $C_k = \max_{i \in [k]} U_i$ and 
$V_k = \sum_{i \in [k]} \Var(M_i-M_{i-1}\mid \cF_{i-1})$. Let 
$\phi(x)=(1+x)\log(1+x)-x$. For every $t \ge 0$ and $V,C > 0$ we have 
\begin{equation}\label{eq:MFr}
\PP(\text{$M_k \ge M_0+t$, $V_k \le V$ and $C_k \le C$ for some $k \in [N]$}) \le e^{-V/C^2 \cdot \phi(Ct/V)} \le e^{-t^2/(2V+2Ct/3)} . 
\end{equation}
\end{lemma}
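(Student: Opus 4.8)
The plan is to run the classical exponential-supermartingale argument of Freedman~\cite{Freedman1975}, using an optional-stopping device to absorb the two side constraints $V_k\le V$ and $C_k\le C$. Write $\DM_i=M_i-M_{i-1}$ and $\sigma_i^2=\Var(\DM_i\mid\cF_{i-1})$; since each $U_i$ is $\cF_{i-1}$-measurable, both $C_k=\max_{i\in[k]}U_i$ and $V_k=\sum_{i\in[k]}\sigma_i^2$ are $\cF_{k-1}$-measurable and nondecreasing in $k$. Hence, with the conventions $V_0=C_0=0$, the quantity $\tau=\max\{k\in\{0,\dots,N\}:V_k\le V\text{ and }C_k\le C\}$ is a stopping time, because monotonicity gives $\{\tau\ge k\}=\{V_k\le V,\,C_k\le C\}\in\cF_{k-1}$. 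After replacing $M_k$ by $M_k-M_0$ we may assume $M_0=0$. Monotonicity also shows that the event in \eqref{eq:MFr} is contained in $\{\max_{0\le k\le N}M_{k\wedge\tau}\ge t\}$: if $M_k\ge t$, $V_k\le V$ and $C_k\le C$ for some $k$, then $k\le\tau$, so $M_{k\wedge\tau}=M_k\ge t$.

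Next I would control the exponential moments of the stopped increments. Fix $\lambda\ge 0$ and set $\beta=(e^{\lambda C}-1-\lambda C)/C^2\ge 0$. Since $g(x)=(e^x-1-x)/x^2=\sum_{j\ge 0}x^j/(j+2)!$ is nondecreasing on $\RR$, every real $Y\le C$ satisfies $e^{\lambda Y}=1+\lambda Y+(\lambda Y)^2 g(\lambda Y)\le 1+\lambda Y+\beta Y^2$. On the $\cF_{k-1}$-measurable event $\{k\le\tau\}$ we have $\DM_k\le U_k\le C_k\le C$ and $\EE(\DM_k\mid\cF_{k-1})=0$, so taking conditional expectations yields $\EE(e^{\lambda\DM_k}\mid\cF_{k-1})\le e^{\beta\sigma_k^2}$ there. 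It follows that
\[
W_k:=\exp\bigl(\lambda M_{k\wedge\tau}-\beta V_{k\wedge\tau}\bigr)
\]
is a nonnegative supermartingale with $W_0=1$: on $\{k\le\tau\}$ one gets $\EE(W_k\mid\cF_{k-1})=W_{k-1}e^{-\beta\sigma_k^2}\EE(e^{\lambda\DM_k}\mid\cF_{k-1})\le W_{k-1}$, while on $\{k>\tau\}$ one has $W_k=W_{k-1}$; boundedness of $M$ supplies integrability.

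Finally I would apply Doob's maximal inequality to the nonnegative supermartingale $(W_k)$, giving $\PP(\max_{0\le k\le N}W_k\ge a)\le\EE W_0/a=1/a$ for $a>0$. On $\{\max_k M_{k\wedge\tau}\ge t\}$ a witnessing index $k$ satisfies $W_k\ge\exp(\lambda t-\beta V_{k\wedge\tau})\ge e^{\lambda t-\beta V}$, since $\lambda,\beta\ge 0$ and $V_{k\wedge\tau}\le V_\tau\le V$. Hence the probability in \eqref{eq:MFr} is at most $\exp(-\lambda t+\beta V)$ for every $\lambda\ge 0$; optimizing over $\lambda$ — the minimum occurs at $\lambda=C^{-1}\log(1+Ct/V)\ge 0$ — collapses this, after the routine simplification with $u=Ct/V$, to $\exp(-VC^{-2}\phi(u))$, which is the first bound in \eqref{eq:MFr}; the second then follows from the elementary inequality $\phi(u)\ge u^2/(2+2u/3)$ for $u\ge 0$ (see e.g.~\cite{JLR}).

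The argument is essentially routine, so the step that needs care is the stopping-time bookkeeping: verifying that $V_k$ and $C_k$ are $\cF_{k-1}$-measurable (so that $\tau$ is a stopping time and the Bennett-type conditional bound may legitimately be invoked on the $\cF_{k-1}$-event $\{k\le\tau\}$), and that monotonicity of $k\mapsto(V_k,C_k)$ is what lets us trade the ``for some $k$'' event for a maximal event of the stopped process. Lemma~\ref{lem:MAH} would be proved along identical lines, using Hoeffding's lemma $\EE(e^{\lambda\DM_k}\mid\cF_{k-1})\le e^{\lambda^2(U_k-L_k)^2/8}$ in place of the Bennett-type estimate and $S_k$ in the role of $V_k$, after which the choice $\lambda=4t/S$ yields $e^{-2t^2/S}$ with no further optimization subtleties.
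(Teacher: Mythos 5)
Your proposal is correct and follows essentially the same Freedman-style route as the paper: a Bennett-type conditional moment bound via the increasing function $g(x)=(e^x-1-x)/x^2$, an exponential supermartingale, optional stopping, and optimization at $\lambda=C^{-1}\log(1+Ct/V)$ followed by $\phi(u)\ge u^2/(2+2u/3)$. The only differences are bookkeeping: the paper stops at the first time $M_k\ge M_0+t$, keeps the random weight $g(\lambda U_i)$ inside the exponent and applies Markov's inequality to the stopped variable, absorbing the constraints $V_k\le V$ and $C_k\le C$ at the end via monotonicity of $g$, whereas you stop at the (predictable) last time the constraints hold, use the constant weight $\lambda^2 g(\lambda C)$, and invoke Doob's maximal inequality --- both implementations are valid.
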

\begin{remark}
Note that $V_k$ generalizes $S_k$ since 
$\Var(M_i-M_{i-1}\mid \cF_{i-1})=\EE((M_i-M_{i-1})^2\mid \cF_{i-1})$ 
holds (it is not hard to check that $V_k \leq S_k/4$). 
In fact, Lemmas~\ref{lem:MAH} and~\ref{lem:MFr} extend with minor 
modifications to supermartingales: defining 
$V_k = \sum_{i \in [k]} \EE((M_i-M_{i-1})^2\mid \cF_{i-1})$ suffices. 
\end{remark}
Observe that we allow for (accumulative) random bounds on the 
one-step changes (and other quantities), which in case of 
Lemma~\ref{lem:MAH} is the main difference to the usual formulation 
of the classical Hoeffding--Azuma inequality~\cite{Hoeffding1963,Azuma1967}. 
Lemma~\ref{lem:MFr} also extends the related Theorem~2.2.2 of Kim and 
Vu~\cite{KimVu2000} (see also Lemma~3.1 in Vu's survey~\cite{Vu2002}), 
which assumes that the underlying probability space is generated by 
independent random variables (of a special form).

Note that $L_k$, $U_k$ are $\cF_{k-1}$-measurable, whereas 
$M_{k}-M_{k-1}$ is $\cF_{k}$-measurable. This difference sometimes 
causes subtle off-by-one errors. As pointed out by Oliver Riordan, 
for e.g.\ the estimate 
\begin{equation}\label{eq:MAHE}
\textstyle
\PP(M_N \ge M_0+t) \le e^{-t^2/2\sum_k c_{k}^2} + \eta 
\end{equation}
it does \emph{not} suffice if $\sum_{k} \PP(|M_k-M_{k-1}| > c_k) \leq \eta$, 
as claimed by Theorem~8.4 in~\cite{ChungLu2006}. The problem is that, 
conditional on $\cF_{k-1}$, in the next step it sometimes is 
\emph{always} possible for $|M_{k}-M_{k-1}| \leq c_k$ to fail 
(although this might be unlikely). With this in mind, we see that 
\eqref{eq:MAHE} holds e.g.\ if 
\[
\textstyle
\sum_{k} \PP(\text{it is possible, given $M_1,...,M_{k-1}$, that $|M_k-M_{k-1}| > c_k$}) \leq \eta . 
\]
In fact, assuming that $|M_k-M_{k-1}| \leq C_k$ always holds, the 
approach of \cite{ChalkerGodboleHitczenkoRadcliffRuehr1999,ShamirSpencer1987} 
e.g.\ implies \eqref{eq:MAHE} if 
\[
\textstyle
\sum_{k} (1+2C_k/t) \cdot \PP(|M_k-M_{k-1}| > c_k/4) \leq \eta .
\]

\subsubsection{Proof of Lemmas~\ref{lem:MAH} and~\ref{lem:MFr}}
Our proofs use the following (standard) inequalities due to 
Hoeffding~\cite{Hoeffding1963} and Steiger~\cite{Steiger1969}; they 
follow e.g.\ from the proofs of Lemmas~2.4, 2.6 and~2.8 in 
McDiarmid's survey~\cite{McDiarmid1998}. 
\begin{lemma}\label{lem:Exp}
Let $X$ be random variable with $\EE(X \mid \cF) = 0$. 
Let $L,U$ be $\cF$-measurable random variables. 
Set $g(x)=(e^x-1-x)/x^2$ for $x \neq 0$ and $g(0)=1/2$. 
For any $\lambda \ge 0$ the following holds: 
\begin{eqnarray}
\label{eq:H} 
L \leq X \leq U & \implies & \EE(e^{\lambda X} \mid \cF) \le e^{\lambda^2(U-L)^2/8} \quad \text{and}\\
\label{eq:St}
X \leq U & \implies & \EE(e^{\lambda X} \mid \cF) \le e^{\lambda^2g(\lambda U)\Var(X\mid\cF)} .
\end{eqnarray}
Furthermore, $g(x)$ is a non-negative increasing function. \nopf
\end{lemma}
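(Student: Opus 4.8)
The plan is to prove the two exponential--moment estimates separately, each by bounding $e^{\lambda x}$ pointwise on the range of $X$ by a function of $x$ whose conditional expectation is trivial to compute, and then to settle the claim about $g$ via an integral representation. Throughout, the key (routine) observation is that $L$ and $U$ are $\cF$-measurable, so under $\EE(\cdot\mid\cF)$ they --- and any quantity built from them --- behave like constants, while $\EE(X\mid\cF)=0$.

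For \eqref{eq:H} I would use convexity of $x\mapsto e^{\lambda x}$. Since $\EE(X\mid\cF)=0$ we have $L\le 0\le U$ a.s., and on $[L,U]$ convexity gives $e^{\lambda x}\le \tfrac{U-x}{U-L}e^{\lambda L}+\tfrac{x-L}{U-L}e^{\lambda U}$ (the case $L=U$ being trivial). Taking $\EE(\cdot\mid\cF)$ and using $\EE(X\mid\cF)=0$ reduces the right-hand side to $(1-p)e^{\lambda L}+p e^{\lambda U}$ with $p=-L/(U-L)\in[0,1]$. Writing $u=\lambda(U-L)$ this equals $e^{\psi(u)}$ for $\psi(u)=-pu+\log(1-p+pe^{u})$; one checks $\psi(0)=\psi'(0)=0$ and $\psi''(u)=q(1-q)\le 1/4$ where $q=pe^{u}/(1-p+pe^{u})\in[0,1]$, so Taylor's theorem yields $\psi(u)\le u^{2}/8$, which is exactly the claimed bound $\EE(e^{\lambda X}\mid\cF)\le e^{\lambda^{2}(U-L)^{2}/8}$.

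For \eqref{eq:St} I would start from the identity $e^{y}=1+y+y^{2}g(y)$, valid for all real $y$ (this is just the definition of $g$, extended continuously at $0$). For $x\le U$ and $\lambda\ge 0$ we have $\lambda x\le\lambda U$, so monotonicity of $g$ (proved next) gives $g(\lambda x)\le g(\lambda U)$, and since $g\ge 0$ and $x^{2}\ge 0$ this upgrades to $e^{\lambda x}\le 1+\lambda x+\lambda^{2}g(\lambda U)x^{2}$. Applying $\EE(\cdot\mid\cF)$ annihilates the linear term, leaving $1+\lambda^{2}g(\lambda U)\,\EE(X^{2}\mid\cF)=1+\lambda^{2}g(\lambda U)\Var(X\mid\cF)$ (using $\EE(X\mid\cF)=0$ once more), and the elementary inequality $1+s\le e^{s}$ completes the bound.

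The stated properties of $g$ follow from the representation $g(x)=\int_{0}^{1}(1-s)e^{sx}\,ds$, which one verifies by computing the elementary integrals $\int_{0}^{1}e^{sx}\,ds$ and $\int_{0}^{1}se^{sx}\,ds$. Non-negativity is immediate because $1-s\ge 0$ on $[0,1]$, and differentiating under the integral sign gives $g'(x)=\int_{0}^{1}s(1-s)e^{sx}\,ds\ge 0$, so $g$ is increasing. None of this is deep --- the lemma merely repackages the classical Hoeffding and Bennett/Steiger estimates in conditional form --- and the only genuine computation is the bound $\psi''\le 1/4$; the one thing to watch is the measurability and a.s.\ bookkeeping noted above (in particular that $L\le 0\le U$ and that the convexity step is only needed on the event $L<U$).
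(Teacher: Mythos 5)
Your proof is correct, and the paper itself gives no argument for this lemma --- it ends with a pointer to the proofs of Lemmas~2.4, 2.6 and~2.8 in McDiarmid's survey, which are exactly the classical route you take (Hoeffding's convexity argument with the $\psi''\le 1/4$ bound for \eqref{eq:H}, and the expansion $e^{y}=1+y+y^{2}g(y)$ combined with monotonicity of $g$ from its integral representation for \eqref{eq:St}). So this is essentially the same approach as the one the paper relies on, just written out in full.
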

\begin{lemma}\label{lem:phi}
Set $\phi(x)=(1+x)\log(1+x)-x$. For all $x \geq 0$ we have 
$\phi(x) \geq x^2/(2+2x/3)$. \nopf 
\end{lemma}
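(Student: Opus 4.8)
The plan is to clear the (positive) denominator and reduce the claim to showing that a single auxiliary function is nonnegative. Since $2 + 2x/3 = \tfrac{2}{3}(3+x) > 0$ for all $x \ge 0$, the inequality $\phi(x) \ge x^2/(2+2x/3)$ is equivalent to $\psi(x) \ge 0$ on $[0,\infty)$, where I set
\[
\psi(x) := 2(3+x)\phi(x) - 3x^2 .
\]
So I would work with $\psi$ and aim to prove $\psi \ge 0$ for $x \ge 0$ by controlling its derivatives.

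The key computation: first note $\phi'(x) = \log(1+x)$, since the $(1+x)\cdot\tfrac{1}{1+x}$ and $-x$ derivative terms cancel. Differentiating $\psi$ three times and simplifying then gives
\[
\psi'(x) = 2\phi(x) + 2(3+x)\log(1+x) - 6x, \qquad \psi''(x) = 4\log(1+x) + \frac{4}{1+x} - 4, \qquad \psi'''(x) = \frac{4x}{(1+x)^2} ,
\]
where the simplification of $\psi''$ uses $\tfrac{2(3+x)}{1+x} = 2 + \tfrac{4}{1+x}$. From these formulas one reads off $\psi(0) = \psi'(0) = \psi''(0) = 0$, and crucially $\psi'''(x) \ge 0$ for every $x \ge 0$.

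The conclusion then follows by integrating up three times: $\psi''' \ge 0$ on $[0,\infty)$ together with $\psi''(0)=0$ gives $\psi'' \ge 0$ there; then $\psi'(0)=0$ gives $\psi' \ge 0$; then $\psi(0)=0$ gives $\psi \ge 0$, which is the desired inequality after dividing by $\tfrac{2}{3}(3+x)$.

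There is no genuine obstacle here beyond bookkeeping, but the one point worth flagging is the choice of rearrangement. The naive route of setting $h(x)=\phi(x) - x^2/(2+2x/3)$ and trying to show successive derivatives of $h$ are sign-definite fails: one has $h(0)=h'(0)=h''(0)=h'''(0)=0$ but $h^{(4)}$ changes sign on $[0,\infty)$ (for instance it is negative near $x=1$). Multiplying through by $3+x$ \emph{before} differentiating is exactly what makes the third derivative collapse to the manifestly nonnegative $4x/(1+x)^2$, so picking that form is the only `idea' in the argument.
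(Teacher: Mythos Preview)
Your proof is correct: the derivative computations check out, the initial values $\psi(0)=\psi'(0)=\psi''(0)=0$ are right, and $\psi'''(x)=4x/(1+x)^2\ge 0$ on $[0,\infty)$ gives the desired chain of nonnegativity. The paper itself gives no proof of this lemma at all (it is stated with a bare \qed as a standard fact, in the same spirit as the preceding Lemma which is deferred to McDiarmid's survey), so your argument supplies a clean self-contained verification where the paper simply appeals to the inequality being well known. Your closing remark about why clearing the denominator before differentiating is the right move is a nice touch, though strictly speaking unnecessary for the proof.
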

\begin{proof}[Proof of Lemmas~\ref{lem:MAH} and~\ref{lem:MFr}] 
Set
\[ 
W_k = \sum_{i \in [k]} g(\lambda U_i)\Var(M_i-M_{i-1} \mid \cF_{i-1}) . 
\] 
The key point is that $M_{k-1}$ and $L_k$, $U_k$, $S_k$, $W_k$ are 
$\cF_{k-1}$ measurable. So, by applying \eqref{eq:H} and 
\eqref{eq:St} to $\EE(e^{\lambda (M_k-M_{k-1})} \mid \cF_{k-1})$ we 
see that 
\[
Y_{k}=e^{\lambda (M_k-M_0)-\lambda^2 S_k/8} \quad \text{ and } \quad Z_{k}=e^{\lambda (M_k-M_0)-\lambda^2 W_k}
\]
satisfy $\EE(Y_k \mid \cF_{k-1}) \leq Y_{k-1}$ and 
$\EE(Z_k \mid \cF_{k-1}) \leq Z_{k-1}$, i.e., are supermartingales. 
We define the stopping time $T$ as the minimum of $N$ and the 
smallest $k \in [N]$ with $M_k-M_0 \ge t$; as usual, we write 
$i \wedge T$ as shorthand for $\min\{i,T\}$. By construction 
$(Y_{k \wedge T})_{0 \leq k \leq N}$ and 
$(Z_{k \wedge T})_{0 \leq k \leq N}$ are both supermartingales. 
In particular, we have 
\[
\EE Y_{N \wedge T} \leq \EE Y_0=1 \quad \text{ and } \quad \EE Z_{N \wedge T} \leq \EE Z_0=1 . 
\]

Let $\cE_N$ denote the event that $M_k\ge M_0 + t$ and $S_k \le S$ 
for some $k \in [N]$. Note that $\cE_N$ implies 
$Y_{N \wedge T}=Y_{T} \ge e^{\lambda t-\lambda^2 S/8}$. So, for 
$\lambda = 4t/S$ Markov's inequality gives 
\[
\PP(\cE_N) \le \PP(Y_{N \wedge T} \ge e^{\lambda t-\lambda^2 S/8}) \le e^{\lambda^2 S/8-\lambda t}=e^{-2t^2/S} , 
\]
which establishes \eqref{eq:MAH} and thus Lemma~\ref{lem:MAH}.

We proceed similarly for $(Z_{k \wedge T})_{0 \leq k \leq N}$ and let 
$\cE'_N$ denote the event that $M_k \ge M_0 + t$, $V_k \le V$ and 
$C_k \le C$ for some $k \in [N]$. Using $V_k \geq 0$ and monotonicity 
of $g(x) \geq 0$ we see that $\cE'_N$ implies 
$Z_{N \wedge T} \ge e^{\lambda t-\lambda^2 g(\lambda C)V_k} \geq e^{\lambda t-\lambda^2 g(\lambda C)V}$. 
Recall that $\phi(x)=(1+x)\log(1+x)-x$. For $\lambda=\log(1+Ct/V)/C^2$ 
Markov's inequality and Lemma~\ref{lem:phi} now yield 
\begin{equation*}
\PP(\cE'_N) \le e^{\lambda^2g(\lambda C) V-\lambda t}= e^{-V/C^2 \cdot \phi(Ct/V)} \le e^{-t^2/(2V+2Ct/3)} , 
\end{equation*}
which establishes \eqref{eq:MFr} and thus Lemma~\ref{lem:MFr}. 
\end{proof}

\subsection{Bounded differences inequalities}\label{sec:BDI}
The textbook proof of Theorem~\ref{thm:McD} is based on the 
Hoeffding--Azuma inequality~\cite{Hoeffding1963,Azuma1967}, and 
essentially uses the `worst case' Lipschitz condition 
\eqref{eq:fL} to apply Lemma~\ref{lem:MAH} with $|U_k-L_k| \leq c_k$. 
We need some modifications to deal with the obstacle that the `good' 
event $\Gamma$ and thus the `typical case' in \eqref{eq:fTL} does 
\emph{not} always hold, and these are partially inspired by the 
seminal work of Shamir and Spencer~\cite{ShamirSpencer1987} from 
1987.

When $\PP(X \notin \Gamma) \leq \eta$ holds one might be tempted to 
add $\eta$ to the error bound and then always assume that $\Gamma$ 
holds. The problem is that in the martingale based proof one needs 
to estimate \emph{conditional} expected changes as in \eqref{eq:fLE}. 
So, informally speaking, despite $\omega \in \Gamma$ the `good' 
event can still fail `inside' the corresponding expectations. 
One can try to overcome this by conditioning on $\Gamma$, but this 
usually introduces a new technical problem: then the variables 
are not conditionally independent (in which case Lipschitz 
conditions comparable to \eqref{eq:fTL} no longer suffice to bound 
the expected changes). These technicalities seem to cause some 
confusion in e.g.~\cite{DubhashiPanconesi2009,Grable1998}.

We step aside these issues by noting that for good bounds on 
conditional \emph{expected} one-step changes it suffices that the 
conditional probabilities of large changes are small. 
One key aspect of our approach is that we can always guarantee 
this via the `global' event $\Gamma$ only, i.e., \emph{without} 
having any knowledge about the corresponding conditional 
distributions.

\subsubsection{The general approach}\label{sec:PrMain}
We now introduce the setup used in all subsequent proofs. 
Let $Y=f(X)$. We consider the increasing sequence of 
sub-$\sigma$-fields $\cF_k$ generated by $X_1, \ldots, X_k$. Using 
Doob's construction, the sequence $Y_k=\EE(Y\mid \cF_k)$ is a 
martingale with $Y_0=\EE f(X)=\mu$ and $Y_N=f(X)$. Now we define 
$\cF_{k-1}$-measurable events $\cB_{k-1}$, where 
$\omega\in \cB_{k-1}$ if 
\begin{equation}\label{eq:Bad} 
\PP(X \notin \Gamma \mid \cF_{k-1})(\omega) > \gamma_{k} . 
\end{equation}
Let $\cB=\neg\Gamma \cup \bigcup_{k \in [N]} \cB_{k-1}$. Note that 
$\PP(X \notin \Gamma \mid \cF_{0}) = \PP(X \notin \Gamma)$ yields 
$\PP(\cB_0)=0$ if $\gamma_1 \geq \PP(X \notin \Gamma)$ and 
$\PP(\cB_0)=1$ otherwise. Using $\gamma_1 \in (0,1]$ we infer 
$\PP(\neg\Gamma \cup \cB_0)\leq \gamma_1^{-1}\PP(X \notin \Gamma)$. 
Observing that $\PP(X \notin \Gamma) = \EE (\PP(X \notin \Gamma \mid \cF_{k-1})) \geq \gamma_k \PP(\cB_{k-1})$, 
the union bound now gives 
\begin{equation}\label{eq:BadUB}
\PP( \cB) \le \PP(\neg\Gamma \cup \cB_0) + \sum_{2 \leq k \leq N} \PP(\cB_{k-1}) \leq \sum_{k \in [N]} \gamma_k^{-1} \cdot \PP(X \notin \Gamma) . 
\end{equation}
Let the stopping time $T$ be the minimum of $N$ and the smallest 
$0 \le k < N$ for which $\cB_k$ holds (note that $T \leq k-1$ is 
$\cF_{k-1}$-measurable). Setting $M_k=Y_{k \wedge T}$, it follows 
that the sequence $(M_{k})_{0 \leq k \leq N}$ is a martingale with 
$Y_0=M_0=\mu$. Since $T=N$ unless $\cB$ holds, recalling $Y_N=f(X)$ 
we see that 
\begin{equation}\label{eq:UB}
\PP(f(X) \ge \mu+t \text{ and } \neg \cB) = \PP(Y_N \ge Y_0+t \text{ and } \neg \cB) \leq \PP(M_N \ge M_0+t) . 
\end{equation}
It remains to establish suitable tail estimates for 
$\PP(M_N \ge M_0+t)$, and via Lemmas~\ref{lem:MAH} and~\ref{lem:MFr} 
this reduces to proving (deterministic) upper bounds on the random 
variables $S_N$, $V_N$ and $C_N$. To this end we consider the 
martingale difference sequences $\DM_{k}=M_k-M_{k-1}$ and 
$\DY_{k}=Y_k-Y_{k-1}$, which satisfy $\EE(\DM_{k}\mid \cF_{k-1})=0$ 
and $\EE(\DY_{k}\mid \cF_{k-1})=0$. Set $e_k=\gamma_k (d_{k}-c_{k})$ 
and $\Delta_k=c_{k}+e_k$.

\begin{proof}[Proof of Theorem~\ref{thm:McDExt}] 
It suffices to show $\DM_{k} \in [-\Delta_k,\Delta_k]$ for each 
$k \in [N]$: then the claim follows by applying Lemma~\ref{lem:MAH} 
with $S=\sum_{k \in [N]}(2\Delta_k)^2$. The following argument is 
written with an eye on the upcoming proofs (where some modifications 
are needed). Note that $\DM_{k}=0$ if $T \leq k-1$ and $\DM_k=\DY_{k}$ 
if $T \geq k$. So it is enough to prove that $|\DY_{k}| \le \Delta_k$ 
whenever $T \geq k$. For brevity, for $z \in \Lambda_k$ and 
$y=(y_{k+1}, \ldots, y_N) \in \prod_{k < j \le N} \Lambda_j$ we write 
$f_y(z)$ for $f(X_1, \ldots, X_{k-1},z, y_{k+1}, \ldots, y_N)$. Note 
that 
\begin{equation*}\label{eq:FXE}
\begin{split}
\EE ( f(X) \mid \cF_{k-1}, X_k=a) &= \sum_{y_{k+1}, \ldots, y_N} f_y(a) \; \PP(X_{k+1}=y_{k+1},\ldots,X_{N}=y_{N} \mid \cF_{k-1}, X_k=a) . 
\end{split}
\end{equation*}
Defining $|\DY_{k}(a,b)|$ via the next equation, since 
$X_1, \ldots, X_N$ are independent it follows that 
\begin{equation}\label{eq:MBCI}
\begin{split}
|\DY_{k}(a,b)| &= |\EE ( f(X) \mid \cF_{k-1},X_k=a)-\EE ( f(X) \mid \cF_{k-1},X_k=b)|\\
& \leq \sum_{y_{k+1}, \ldots, y_N} |f_y(a)-f_y(b)| \; \PP(X_{k+1}=y_{k+1},\ldots,X_{N}=y_{N} \mid \cF_{k-1},X_k=a) . 
\end{split}
\end{equation}
By distinguishing between $X \in \Gamma$ and $X \notin \Gamma$, each 
time applying \eqref{eq:fTL} as appropriate, we infer 
\begin{equation}\label{eq:MBCI2}
\begin{split}
|\DY_{k}(a,b)| & \leq c_k \PP(X \in \Gamma \mid \cF_{k-1},X_k=a) + d_k \PP(X \not\in \Gamma\mid \cF_{k-1},X_k=a)\\
& = c_k + (d_k-c_k) \PP(X \not\in \Gamma \mid \cF_{k-1},X_k=a) . 
\end{split}
\end{equation}
Recall that $\cB_{k-1}$ fails if $T \geq k$. Using 
$\sum_{y_k}\PP(X_k=y_k \mid \cF_{k-1})=1$ together with 
\eqref{eq:MBCI2} and \eqref{eq:Bad}, for $T \geq k$ we deduce 
\begin{equation}\label{eq:MBC}
\begin{split}
|\DY_{k}| &= |\EE ( f(X) \mid \cF_{k-1})-\EE ( f(X) \mid \cF_k)| \leq \sum_{y_{k}} |\DY_{k}(y_k,X_k)| \; \PP(X_{k}=y_{k} \mid \cF_{k-1})\\
& \leq c_k + (d_k-c_k) \PP(X \not\in \Gamma \mid \cF_{k-1}) \leq c_k + \gamma_k (d_k-c_k) = \Delta_k . 
\end{split}
\end{equation}
As explained, this completes the proof. 
\end{proof}
Here we could have used the classical Hoeffding--Azuma 
inequality~\cite{Hoeffding1963,Azuma1967} since the proof yields 
(deterministic) bounds for each individual $\DM_{k}$. 
We decided to apply Lemma~\ref{lem:MAH} since the forthcoming 
modifications needed for the `dynamic exposure' of Section~\ref{sec:AQ} 
do use its full strength, i.e., that accumulative estimates of the 
$\DM_{k}$ suffice.

\begin{proof}[Proof of Remark~\ref{rem:McDExt}]
Following the approach of McDiarmid~\cite{McDiarmid1989} we now modify 
the proof of Theorem~\ref{thm:McDExt} whenever $X_k$ takes only two 
values, say, $\Lambda_k=\{0,1\}$. We focus on the relevant case 
$T \geq k$, where $\DM_k=\DY_{k}$. Define $L_{k}$ and $U_{k}$ as the 
minimum and maximum of $\EE ( f(X) \mid \cF_{k-1},X_k=z)- \EE ( f(X) \mid \cF_{k-1})$ 
for $z \in \{0,1\}$. Clearly $L_k$ and $U_k$ are $\cF_{k-1}$-measurable 
and satisfy $L_k \leq \DY_{k} \leq U_k$. The key observation is that, 
using $T \ge k$, there exists an $\cF_{k-1}$-measurable 
$\alpha \in \{0,1\}$ satisfying 
\begin{equation}\label{eq:gammaK}
\gamma_k \geq \PP(X \not\in \Gamma \mid \cF_{k-1}) \geq \PP(X \not\in \Gamma \mid \cF_{k-1},X_k=\alpha) . 
\end{equation} 
So, since $X_k \in \{0,1\}$ takes only two values, using 
\eqref{eq:MBCI2} and \eqref{eq:gammaK} we infer for $T \geq k$ that 
\begin{equation}\label{eq:MBC3}
|U_k-L_k| \leq |\DY_{k}(\alpha,1-\alpha)| \leq c_k + (d_k-c_k) \PP(X \not\in \Gamma \mid \cF_{k-1},X_k=\alpha) \leq \Delta_k . 
\end{equation}
This completes the proof (by applying Lemma~\ref{lem:MAH} with 
$S=\sum_{k \in [N]}\Delta_k^2$). 
\end{proof}
In fact, Theorem~\ref{thm:McD} follows by a similar modification 
(here \eqref{eq:MBCI2} implies $\max_{a,b}|\DY_{k}(a,b)| \leq c_k$).

\begin{proof}[Proof of Theorem~\ref{thm:McDExtV}]
In the proof of Theorem~\ref{thm:McDExt} we established 
$\DM_{k} \leq \Delta_k$ for every $k \in [N]$. In view of this it 
suffices to show $\Var(\DM_k\mid\cF_{k-1})\le (1-p_k)p_k\Delta_k^2$ 
for each $k \in [N]$: then the claim follows by applying 
Lemma~\ref{lem:MFr} with $V=\sum_{k \in [N]}(1-p_k)p_k\Delta_k^2$ and 
$C=\max_{k \in [N]}\Delta_k$. Observe that $\EE(\DM_k \mid \cF_{k-1})=0$ 
implies $\Var(\DM_k\mid\cF_{k-1})=\EE(\DM_k^2\mid\cF_{k-1})$. Recall 
that $\DM_{k}=0$ if $T \leq k-1$ and $\DM_{k}=\DY_{k}$ if $T \geq k$. 
Combining these facts it is enough to prove that 
$\EE(\DY_{k}^2\mid\cF_{k-1}) \le (1-p_k)p_k\Delta_k^2$ whenever 
$T \geq k$. Set $D_k=\EE(Y \mid \cF_{k-1},X_k=1)-\EE(Y \mid \cF_{k-1},X_k=0)$. 
Recalling $\DY_k=\EE(Y\mid \cF_k)-\EE(Y\mid \cF_{k-1})$ we see that 
\begin{equation}\label{eq:DYk:two}
|\DY_k| \leq |D_k| \sum_{\beta \in \{0,1\}} \PP(X_k=\beta \mid \cF_{k-1}) \indic{X_k = 1-\beta} . 
\end{equation} 
Arguing as in~\eqref{eq:gammaK} and \eqref{eq:MBC3} we readily obtain 
$|D_k| \leq \Delta_k$ when $T \geq k$, and thus infer 
\[
\DY_k^2 \leq \Delta_k^2 \sum_{\beta \in \{0,1\}} \PP(X_k=\beta \mid \cF_{k-1})^2 \indic{X_k = 1-\beta} . 
\]
Using the independence of $X_1, \ldots, X_N$ it follows that for 
$T \ge k$ we have 
\begin{equation}\label{eq:EDYk:two}
\EE(\DY_{k}^2\mid\cF_{k-1}) \leq \Delta_k^2 \sum_{\beta \in \{0,1\}} \PP(X_k=\beta)^2 \PP(X_k = 1-\beta) = (1-p_k)p_k \Delta_k^2 , 
\end{equation}
where we used $\PP(X_k=1)=p_k$ and $(1-x)^2x+x^2(1-x)= (1-x)x$ for 
the last inequality. 
\end{proof}
Note that \eqref{eq:DYk:two} implies 
$\DM_{k} \leq \max\{1-p_k,p_k\} \cdot \Delta_k$, but the resulting 
minor improvement of $C$ usually has negligible effect.

\begin{proof}[Proof of Remark~\ref{rem:McDExtV}]
In the more general situation where each $X_k$ takes values in a set 
$\Lambda_k$ and satisfies $\max_{\eta \in \Lambda_k} \PP(X_k=\eta) \ge 1-p_k$,
we first show that \eqref{McDExtV:Pr} holds after deleting $(1-p_k)$ 
and replacing $c_k+e_k$ with $\tilde{\Delta}_k=c_k+e_k \cdot (1-p_k)^{-1}$. 
With the proof of Theorem~\ref{thm:McDExtV} in mind it suffices to show 
$\Var(\DY_{k}\mid\cF_{k-1}) \le p_k\tilde{\Delta}_k^2$ whenever $T \ge k$. 
For $\beta \in \Lambda_k$ satisfying $\PP(X_k=\beta) \geq 1-p_k$ set 
$\tau_{k}=\EE(Y \mid \cF_{k-1},X_k=\beta)$ and $D_k=\EE(Y\mid \cF_k) -\tau_{k}$. 
Note that, using the independence of $X_1, \ldots, X_N$, we have 
$\PP(D_k \neq 0 \mid \cF_{k-1}) \le \PP(X_k \neq \beta) \le p_k$. 
We claim that it suffices to show $|D_k| \leq \tilde{\Delta}_k$. 
Indeed, since $Y_{k-1}$ and $\tau_{k}$ are $\cF_{k-1}$ measurable, we 
have 
\[
\Var(\DY_{k} \mid \cF_{k-1})=\Var(D_k \mid \cF_{k-1}) \leq \EE(D_k^2 \mid \cF_{k-1}) \leq \tilde{\Delta}_k^2 \PP(D_k \neq 0 \mid \cF_{k-1}) \leq p_k \tilde{\Delta}_k^2 . 
\]
To bound $|D_k|$, first note that $T \ge k$ and independence of 
$X_1, \ldots, X_N$ yields 
\begin{equation}\label{eq:gammaK2}
\gamma_k \geq \PP(X \not\in \Gamma \mid \cF_{k-1}) \geq \PP(X \not\in \Gamma \mid \cF_{k-1},X_k=\beta) (1-p_k) . 
\end{equation}
So, using \eqref{eq:MBCI2} and \eqref{eq:gammaK2}, for $T \geq k$ we 
infer 
\begin{equation}\label{eq:DkG}
\begin{split}
|D_k| &= |\EE(Y \mid \cF_{k-1},X_k=\beta)-\EE(Y \mid \cF_{k})| = |\DY_k(\beta,X_k)|\\
&\leq c_k + (d_k-c_k) \PP(X \not\in \Gamma \mid \cF_{k-1},X_k=\beta) \leq c_k+ \gamma_k(1-p_k)^{-1} \cdot (d_k-c_k) = \tilde{\Delta}_k , 
\end{split}
\end{equation}
establishing the claim.

A similar argument shows that \eqref{McDExtVCor:Pr} holds after 
deleting $(1-p_k)$. The point is that in Corollary~\ref{Cor:McDExtVCor} 
there is no `good' event $\Gamma$. Consequently, when invoking 
\eqref{eq:MBCI2} in \eqref{eq:DkG} the standard line of reasoning 
(using \eqref{eq:fL} instead of \eqref{eq:fTL}) yields 
$|D_k| \le c_k$, and the claim follows. 
\end{proof}

\begin{proof}[Proof of Theorem~\ref{thm:McDfTLP}]
The crux is that \eqref{eq:MBCI2} and \eqref{eq:MBC} are at the heart 
of all previous proofs. In the following we exploit that both can be 
adapted when \eqref{eq:fTLP} instead of \eqref{eq:fTL} holds: it 
suffices if $e_k=2\gamma_k (d_{k}-c_{k})q_k^{-1}$ is used, 
where $\min_{\eta \in \Lambda_k} \PP(X_k=\eta) \geq q_k$.

We start by modifying the proof of Theorem~\ref{thm:McDExt}. 
Analogous to \eqref{eq:gammaK2}, if $T \geq k$ then for all 
$b \in \Lambda_k$ we have 
\begin{equation*}\label{eq:gammaKM}
\gamma_k \ge \PP(X \not\in \Gamma \mid \cF_{k-1}) \geq \PP(X \not\in \Gamma \mid \cF_{k-1},X_k=b) q_k . 
\end{equation*}
The key point of \eqref{eq:fTLP} is that $|f(x)-f(\tilde{x})| \leq c_k$ 
only holds if $x,\tilde{x} \in \Gamma$. So, using \eqref{eq:fTLP} as 
appropriate, the corresponding variant of \eqref{eq:MBCI2} for 
$T \ge k$ is 
\begin{equation}\label{eq:MBCIM}
\begin{split}
|\DY_{k}(a,b)| & \leq c_k + (d_k-c_k) \left[\PP(X \not\in \Gamma \mid \cF_{k-1},X_k=a)+\PP(X \not\in \Gamma \mid \cF_{k-1},X_k=b)\right] \\
& \leq c_k + (d_k-c_k) \left[\PP(X \not\in \Gamma \mid \cF_{k-1},X_k=a)+\gamma_k q_k^{-1} \right] . 
\end{split}
\end{equation}
Now, arguing as in \eqref{eq:MBC} and using $1+q_k^{-1} \le 2q_k^{-1}$, 
we obtain a natural analogue for $T \ge k$, namely 
\begin{equation}\label{eq:MBCM}
\begin{split}
|\DY_{k}| & \leq c_k + (d_k-c_k) \left[\PP(X \not\in \Gamma \mid \cF_{k-1}) +\gamma_k q_k^{-1} \right] \leq c_k + 2\gamma_k (d_k-c_k)q_k^{-1} = \Delta_k , 
\end{split}
\end{equation}
which establishes the claimed variant of Theorem~\ref{thm:McDExt}.

In the proofs of Remark~\ref{rem:McDExt} and Theorem~\ref{thm:McDExtV} 
we only need to adapt \eqref{eq:MBC3}, and using \eqref{eq:MBCIM} this 
follows by straightforward modifications. Similarly, in the proof of 
Remark~\ref{rem:McDExtV} it suffices to modify \eqref{eq:DkG}, which 
is standard using \eqref{eq:MBCM} together with 
$(1-p_k)^{-1}+q_k^{-1} \leq 2q_k^{-1}(1-p_k)^{-1}$. 
\end{proof}

\subsubsection{Some extensions}\label{sec:PrExt}
\begin{proof}[Proof of Remark~\ref{rem:MON}]
Note that $f(X) \geq \mu +t$ is increasing (decreasing) if $f(X)$ is 
increasing (decreasing). Furthermore, in view of \eqref{eq:Bad} it is 
easy to check that $\cB_{k-1}$ is increasing (decreasing) if $\Gamma$ 
is decreasing (increasing). Using the definition $\cB$ and the 
assumptions of Remark~\ref{rem:MON}, it follows that $f(X) \ge \mu+t$ 
and $\neg\cB$ are either both increasing or decreasing. 
So Harris' inequality~\cite{Harris1960} yields 
\[
\PP(f(X) \ge \mu+t \text{ and } \neg\cB) \geq \PP(f(X) \ge \mu+t) \cdot \PP(\neg \cB) , 
\]
which readily establishes \eqref{McDExtVMon:Pr}. 
\end{proof}

\begin{proof}[Proof of Theorem~\ref{thm:McDExtP}] 
The basic idea is to use a truncation that maps every 
$x_k \notin \Gamma_k$ to some fixed $z_k \in \Gamma_k$. As before, we 
work with the sub-$\sigma$-fields $\cF_k$ generated by 
$X_1, \ldots, X_k$. Recall that $\cB$ is defined via \eqref{eq:Bad} 
and satisfies $\neg \cB \subseteq \Gamma$. Since 
$\PP(f(X) \ge \mu+t \text{ and } \neg\cB) \leq \PP(X \in \Gamma)$ we 
may assume that $\PP(X \in \Gamma) > 0$ and fix some 
$z=(z_1, \ldots, z_N) \in \Gamma \subseteq \prod_{j \in [N]}\Gamma_j$. 
For $x=(x_1, \ldots, x_N)$ we now define $x^*=(x_1^*, \ldots, x_N^*)$ 
via 
\begin{equation}\label{eq:MapX}
x_k^* = \begin{cases}
		x_k & \;\text{if $x_k \in \Gamma_k$,}\\
		z_k & \;\text{if $x_k \not\in \Gamma_k$.}
	\end{cases} 
\end{equation}
The key properties of this construction are (a) that $x \in \Gamma$ 
implies $x^*=x$, and (b) that $X^*=(X_1^{*}, \ldots, X_N^{*})$ is a 
family of independent random variables. Set $\mu^*=\EE f(X^*)$. 
We have $|\mu-\mu^*| \leq \EE |f(X)-f(X^*)| \leq s \PP(X \notin \Gamma)=\Delta$ 
(this is not best possible but keeps the formulas simple), and so 
$\Gamma \subseteq \neg \cB$ yields 
\begin{equation}\label{eq:PrgX}
\PP(f(X) \ge \mu+t+\Delta \text{ and } \neg\cB) \le \PP(f(X^*) \ge \mu^*+t \text{ and } \neg\cB) . 
\end{equation}
Now we estimate the right hand side of \eqref{eq:PrgX} for $Y=f(X^*)$ 
as in the proof of Theorem~\ref{thm:McDExt} via~\eqref{eq:UB}, and 
there are only two minor differences. 
The first is that due to the projection \eqref{eq:MapX} we have 
$X^* \in \prod_{j \in [N]}\Gamma_j$, so that the `refined' Lipschitz 
coefficients $c_k$ and $d_k$ of Theorem~\ref{thm:McDExtP} always apply. 
The second concerns the case distinction $X^* \in \Gamma$ and 
$X^* \notin \Gamma$. Here we use that $X^* \not\in \Gamma$ implies 
$X \not\in \Gamma$ pointwise, which yields $\PP(X^* \notin \Gamma \mid \cF_{k-1}) \leq \PP(X \notin \Gamma \mid \cF_{k-1})$. 
With this estimate the conclusion of \eqref{eq:MBC} readily carries 
over, completing the proof. 
\end{proof}
Here we could have estimated $f(X^*) \ge \mu^*+t$ via 
Theorem~\ref{thm:McDExt} (with $\Lambda_k$ replaced by $\Gamma_k$), 
using that $\PP(X^*\notin \Gamma) \leq \PP(X \notin \Gamma)$. The 
advantage of our more pedestrian approach is that it uses the same 
`bad' event $\cB$ as all other proofs (by applying 
Theorem~\ref{thm:McDExt} it would depend on $z$). 
\begin{proof}[Proof of Remark~\ref{rem:McDExtP}]
The monotonicity property implies $\mu \geq \mu^*$ (writing 
$f(X)-f(X^*)$ as a difference sequence of coordinate changes), so 
$\Delta=0$ suffices using $\mu+t \geq \mu^*+t$ in \eqref{eq:PrgX}. 
Turning to the special case $\Gamma = \prod_{j \in [N]}\Gamma_j$, 
note that $\cB=\neg \Gamma$ suffices to establish \eqref{eq:PrgX}. 
Now, since $X^*=(X_1^{*}, \ldots, X_N^{*})$ is a family of 
independent random variables with $X_k^{*} \in \Gamma_k$ satisfying 
(L), the claimed variant readily follows from Theorem~\ref{thm:McD}. 
\end{proof}

\subsubsection{Variants using dynamic exposure}\label{sec:PrDI}
In the following we briefly sketch how to modify the proofs of 
Sections~\ref{sec:PrMain} and~\ref{sec:PrExt} in case the variables 
are exposed in a dynamic order, which will eventually establish 
Theorem~\ref{thm:DynImpr} and~\ref{thm:DynImpr2} (in contrast 
to~\cite{AlonKimSpencer1997} our approach is based on general 
martingale inequalities). Recall that the strategies introduced in 
Section~\ref{sec:AQ} sequentially expose $X_{q_1}, X_{q_2}, \ldots$ 
with $q_i=q_i(X_{q_1}, \ldots, X_{q_{i-1}})$, where $f(X)$ is 
determined by $(X_1, \ldots, X_{q_k})$ with $k < N$ if $q_{k+1}=q_k$. 
For technical reasons we slightly modify these strategies so that 
(always) all variables are queried. More precisely, for the proof of 
Theorem~\ref{thm:DynImpr} we set $\tilde{q}_{k}=q_k$ until $f(X)$ is 
determined by $(X_1, \ldots, X_{q_k})$; afterwards 
$\tilde{q}_{k+1}, \ldots, \tilde{q}_{N}$ equals the remaining 
`useless' indices $[N] \setminus \{q_1, \ldots, q_k\}$ in ascending 
order, say (for the proof of Theorem~\ref{thm:DynImpr2} we simply use 
the fixed order $\tilde{q}_k=k$ for all $k \in [N]$). 
We consider an increasing sequence of sub-$\sigma$-fields, where 
$\cF_k$ is generated by $X_{\tilde{q}_1}, \ldots, X_{\tilde{q}_k}$. 
Note that each index $\tilde{q}_k$ is $\cF_{k-1}$-measurable. 
Furthermore, our modification ensures that the following two key 
properties hold: $X$ is $\cF_N$-measurable (this is needed to apply 
$\Gamma$ since the value of $f(X)$ must not uniquely determine $X$), 
and conditional on $\cF_{k-1}$ all $X_j$ with 
$j \notin \{\tilde{q}_1, \ldots, \tilde{q}_{k-1}\}$ are independent 
random variables. Define $R=\max_{Q \in \cQ}|Q|$ and 
$\cB=\neg\Gamma \cup \bigcup_{k \in [R]} \cB_{k-1}$ (in case of the 
fixed order $\tilde{q}_k=k$ we set $R=N$, so $\cB$ remains 
unchanged). Since the definition of $\cB_{k-1}$ via \eqref{eq:Bad} 
involves $\cF_{k-1}$, it follows that $\cB$ depends on the query 
strategy (unless the fixed order $\tilde{q}_k=k$ is used, as in the 
proof of Theorem~\ref{thm:DynImpr2}).

With these changes in mind, all arguments of Sections~\ref{sec:PrMain} 
and~\ref{sec:PrExt} essentially carry over word by word, the only 
exception being the proof of Remark~\ref{rem:MON} (the monotonicity 
argument needs a fixed order such as $\tilde{q}_k=k$). 
The crucial observation is that for every variable $X_\ell$ not 
queried by the original strategy we know that its value will 
\emph{not} change the outcome of $f(X)$. To be more formal, the key 
point is that whenever such a `useless' variable is queried in step 
$i$ we have $\DM_i=0$ (note that for $i>R$ this is always the case), 
i.e., the indices of these variables do not contribute to $S_N$, 
$V_N$ or $C_N$. 
Observe that due to the dynamic exposure we `only' have a connection 
between $\gamma_k$ and the index $\tilde{q}_k$. We overcome this 
minor complication using the assumption that $\gamma_k=\gamma$ for 
all $k \in [N]$ (we may allow for different $\gamma_k$ if 
$\tilde{q}_k=k$ is used), which also ensures that 
$\max_{Q \in \cQ}\sum_{k \in Q} \gamma_k^{-1}=\sum_{k \in [R]}\gamma_k^{-1}$ 
holds (in case of $\tilde{q}_k=k$ the estimate \eqref{eq:BadUB} stays 
unchanged).

The remaining details for establishing Theorem~\ref{thm:DynImpr} 
and~\ref{thm:DynImpr2} are rather straightforward: when invoking the 
martingale estimates we simply take the `worst case' bounds for $S$, 
$V$ and $C$ over all possible sets of queried indices $Q \in \cQ$ 
(where $Q$ and $\cQ$ are as defined in Section~\ref{sec:AQ}); for 
example, using $S=\max_{Q \in \cQ}\sum_{k \in Q}(2\Delta_k)^2$ in 
case of Theorem~\ref{thm:McDExt}. It is this last step where the 
accumulative random bounds in Lemmas~\ref{lem:MAH} and \ref{lem:MFr} 
are crucial (the behaviour of each individual $\DM_k$ may vary 
significantly for different sample points due to the dynamic order in 
which the variables are queried).

\subsubsection{Variants using the general Lipschitz condition}\label{sec:PrGV}
Finally, we discuss how to modify the proofs in Section~\ref{sec:PrMain} 
when the independence assumption is replaced by (GL). 
We first claim that $\rho_k=\rho_k(\Sigma_a,\Sigma_b):\Sigma_a \to \Sigma_b$ 
is a bijection with equality in \eqref{eq:PGTL}, i.e., satisfies 
\begin{equation}\label{eq:PGTLE}
\PP(X=x \mid X \in \Sigma_a) = \PP(X=\rho_k(x) \mid X \in \Sigma_b) .
\end{equation}
Indeed, using \eqref{eq:PGTL} and that $\rho_k$ is injective it 
follows that 
\[
\sum_{x \in \Sigma_a} \PP(X=x \mid X \in \Sigma_a) \leq \sum_{x \in \Sigma_a} \PP(X=\rho_k(x) \mid X \in \Sigma_b) \leq \sum_{x \in \Sigma_b} \PP(X=x \mid X \in \Sigma_b) . 
\]
Noting that $\sum_{x \in \Sigma_z} \PP(X=x \mid X \in \Sigma_z)=1$ 
for $z \in \{a,b\}$ with $|\Sigma_z|>0$ we infer that all 
inequalities are in fact equalities, which establishes 
\eqref{eq:PGTLE}. Since every $x \in \Sigma_z$ satisfies $\PP(X=x)>0$ 
it also follows that $\rho_k$ must be a bijection, as claimed.

In preparation of our forthcoming arguments we now relate the 
definitions used in the proofs of Section~\ref{sec:PrMain} with those 
occurring in (GL). Analogous to $\Sigma_z$, given any possible 
sequence of outcomes $a_1, \ldots, a_{k-1}$ of $X_1, \ldots, X_{k-1}$ 
define $\Sigma$ as the set of all 
$x=(a_1, \ldots, a_{k-1},x_k, \ldots, x_N) \in \prod_{j \in [N]}\Lambda_j$ 
with $\PP(X=x)>0$. Recall that $\cF_k$ is the increasing sequence of 
sub-$\sigma$-fields generated by $X_1, \ldots, X_k$. The key point is 
that $(\cF_k)_{0 \leq k \leq N}$ naturally corresponds to an 
increasing sequence of partitions of the sample space, where two 
points belong to the same part if and only if they agree on the first 
$k$ coordinates. For example, for $\omega \in \Sigma$ we have 
\begin{equation}
\label{eq:Conv}
\EE(\cdot \mid \cF_{k-1},X_k=z)(\omega)=\EE(\cdot \mid X \in \Sigma_z)
\quad \text{ and } \quad 
\PP(\cdot \mid \cF_{k-1},X_k=z)(\omega)=\PP(\cdot \mid X \in \Sigma_z) . 
\end{equation}

\begin{proof}[Proof of Theorem~\ref{thm:McDGenExt}]
We modify the proof of Theorem~\ref{thm:McDExt}, where independence 
is only used to establish \eqref{eq:MBCI}. Using \eqref{eq:Conv} and 
that the bijection $\rho_k:\Sigma_a \to \Sigma_b$ satisfies 
\eqref{eq:PGTLE}, we obtain 
\begin{equation*}\label{eq:MBCIGV}
\begin{split}
|\DY_{k}(a,b)| &= |\EE ( f(X) \mid X \in \Sigma_{a})-\EE ( f(X) \mid X \in \Sigma_{b})|\\
&= |\sum_{x \in \Sigma_{a}} f(x) \PP(X=x \mid X \in \Sigma_{a})-\sum_{x \in \Sigma_{b}} f(x) \PP(X=x \mid X \in \Sigma_{b})|\\
& \leq \sum_{x \in \Sigma_{a}} |f(x)-f(\rho_k(x))| \PP(X=x \mid X \in \Sigma_{a}) , 
\end{split}
\end{equation*}
which is the natural analogue of \eqref{eq:MBCI}. 
The remainder of the argument carries over with minor modifications. 
Indeed, proceeding as in \eqref{eq:MBCI2} (applying \eqref{eq:fGTL} 
instead of \eqref{eq:fTL}) and then appealing to \eqref{eq:Conv}, we 
infer 
\begin{equation}\label{eq:MBCI2GV}
\begin{split}
|\DY_{k}(a,b)| & \leq c_k + (d_k-c_k) \PP(X \not\in \Gamma \mid X \in \cF_{k-1},X_k=a) . 
\end{split}
\end{equation}
Now, by arguing as in \eqref{eq:MBC}, when $T \ge k$ holds we also have 
\begin{equation}\label{eq:MBCGV}
\begin{split}
|\DY_{k}| & \leq c_k + (d_k-c_k)\PP(X \not\in \Gamma \mid X \in \cF_{k-1}) \leq \Delta_k , 
\end{split}
\end{equation}
completing the proof. 
\end{proof}
Remark~\ref{rem:GTL} follows by similar reasoning (noting that the 
proof of Remark~\ref{rem:McDExt} carries over and that 
\eqref{eq:MBCI2GV} equals \eqref{eq:EfGTL} after replacing 
$(d_k-c_k)$ with $r_k$).

\begin{proof}[Proof of Theorem~\ref{thm:McDGenV}]
We modify the proof of Remark~\ref{rem:McDExtV} by picking (some) 
$\cF_{k-1}$-measurable $\beta \in \Lambda_k$ maximizing 
$\PP(X_k = \beta \mid \cF_{k-1})$. By assumption we have 
$\PP(X_k=\beta \mid \cF_{k-1}) \geq 1-p_k$, which in turn yields 
$\PP(D_k \neq 0 \mid \cF_{k-1}) \leq \PP(X_k \neq \beta \mid \cF_{k-1}) \le p_k$. 
Noting that all remaining applications of independence are already 
covered by \eqref{eq:MBCI2GV} and \eqref{eq:MBCGV}, this completes 
the proof. 
\end{proof}

\begin{proof}[Proof of Theorem~\ref{thm:McDGenfTL}]
With the above modifications in mind the proof of 
Theorem~\ref{thm:McDfTLP} carries over word by word, which 
establishes the first part of the claim. 
Turning to the second part, our earlier discussion shows that for 
$\Sigma_z,\Sigma_\eta \subseteq \Sigma$ with 
$|\Sigma_z|,|\Sigma_\eta|>0$ there is a bijection 
$\rho_k:\Sigma_z \to \Sigma_\eta$. So, since all possible outcomes 
occur with the same probability, we obtain 
$\PP(X \in \Sigma_z)=\PP(X \in \Sigma_\eta)$. For $\eta \in \Lambda_k$
 satisfying $|\Sigma_\eta|>0$ it follows that 
\[
\frac{1}{\PP(X_k=\eta \mid X \in \Sigma)} = \frac{\PP(X \in \Sigma)}{\PP(X \in \Sigma_\eta)} = \sum_{z \in \Lambda_k} \frac{\PP(X \in \Sigma_z)}{\PP(X \in \Sigma_\eta)} \leq |\Lambda_k| . 
\]
We deduce that 
$\min_{\eta \in \Lambda_k}\PP(X_k=\eta \mid X_1, \ldots, X_{k-1}) \geq |\Lambda_k|^{-1}$, so $q_k \leq |\Lambda_k|^{-1}$ 
suffices. 
\end{proof}

\section{Final number of edges in the reverse $H$-free process}\label{sec:Hfree}
In our analysis of the reverse $H$-free process we use several 
equivalent definitions (with respect to the final graph). 
Recall that, starting with the complete graph on vertex set $[n]$, in 
each step an edge is removed, chosen uniformly at random from all 
edges contained in a copy of $H$. 
As in~\cite{ErdoesSuenWinkler1995,Makai2012}, a moment's thought 
reveals that we may instead traverse all $\binom{n}{2}$ edges in 
random order, each time removing the current edge if and only if it 
is contained in a copy of $H$ in the evolving graph. 
As observed by Erd{\H o}s, Suen and Winkler~\cite{ErdoesSuenWinkler1995}, 
after considering $e_{\binom{n}{2}}, \ldots, e_{i+1}$ the decision 
whether $e_i$ is removed depends \emph{only} on the later edges 
$e_{i-1}, \ldots, e_{1}$ (all other `surviving' ones are by 
construction not contained in a copy of $H$). 
This allows us to consider the edges in reverse order, where $e_i$ is 
added if and only if it does not complete a copy of $H$ together with 
$e_1, \ldots, e_{i-1}$ (it does not matter whether these were added or 
not). Given a random permutation, we denote the corresponding random 
graph process after $i$ steps by $G_{n,i}(H) \subseteq G_{n,i}$, 
where $G_{n,i}$ is the uniform random graph with $n$ vertices and $i$ 
edges.

For technical reasons it will be convenient to also consider a 
continuous variant of the above process, where each edge is 
independently assigned a uniform birth time $B_e \in [0,1]$; the 
edges are then traversed in ascending order of their birth times 
(which are all distinct with probability one). The resulting process 
that considers only those edges with $B_e \leq p$ is denoted by 
$G_{n,p}(H) \subseteq G_{n,p}$. So for $p=1$ all edges are traversed 
in random order, and it follows that 
\begin{equation}
\label{eq:GnmHGnpH}
G_{n,\binom{n}{2}}(H) = G_{n,1}(H) .
\end{equation}
Conditioned on $B_e=q$, the decision whether $e$ is added only 
depends on the edges $f$ with $B_f \leq q$, which have the same 
distribution as $G_{n,q}$. As noted by Makai~\cite{Makai2012}, this 
allows for the use of classical random graph theory when estimating 
the probability that an edge is added to the evolving graph. 
Recall that $m_2(H)=d_2(H)$ for $2$-balanced graphs $H$. For 
 \[
m = n^{2-1/m_2(H)} (\log n)^{2} \quad \text{and} \quad p= n^{-1/m_2(H)} (\log n)^{2}
\]
the next lemma follows from the results of Spencer~\cite{Spencer1990} 
mentioned in Section~\ref{sec:Janson}. Note that in $G_{n,p}$ every 
pair of vertices is expected to have $\Theta((\log n)^{2(e_H-1)})$ 
`extensions' to copies of $H$. 
\begin{lemma}\label{lem:DI}
Let $H$ be a $2$-balanced graph. Let $\cD$ (and $\cI$) denote the 
event that for every pair $xy$ of vertices the following holds: after 
adding the edge $xy$ there are at most $\Psi_H=(\log n)^{2e_H}$ 
copies (is at least one copy) of $H$ containing the edge $xy$. 
For every $c>0$ we have $\PP(G_{n,m} \in \cD \cap \cI) \geq 1-n^{-c}$ 
for $n \geq n_0(c,H)$. \qed 
\end{lemma}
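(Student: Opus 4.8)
The plan is to transfer everything to the binomial model $G_{n,p}$ with $p=\Theta\bigl(n^{-1/m_2(H)}(\log n)^{2}\bigr)$, where $d_2(H)=m_2(H)=(e_H-1)/(v_H-2)$ for $2$-balanced $H$, and then quote the classical extension-counting results. Since $\cD$ is a decreasing and $\cI$ an increasing graph property, I would fix $p_-=n^{-1/m_2(H)}(\log n)^2$ and $p_+=4n^{-1/m_2(H)}(\log n)^2$, so that $p_-\binom n2<m<p_+\binom n2$ with constant-factor gaps, and couple $G_{n,p_-}\subseteq G_{n,m}\subseteq G_{n,p_+}$ (via independent uniform edge-labels); this coupling fails only when a $\mathrm{Bin}\bigl(\binom n2,p_\pm\bigr)$ variable deviates from its mean past $m$, which has probability $e^{-\Omega(m)}$ and is negligible. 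Hence it suffices to show $\PP(G_{n,p_+}\in\cD)\ge 1-n^{-c}$ and $\PP(G_{n,p_-}\in\cI)\ge 1-n^{-c}$ after relabelling $c$. For a fixed pair $xy$, write $N_{xy}$ for the number of copies of $H$ through the edge $xy$ once $xy$ is added; the usual first-moment count gives $\EE N_{xy}=\Theta\bigl(n^{v_H-2}p^{e_H-1}\bigr)=\Theta\bigl((\log n)^{2(e_H-1)}\bigr)$, using $(e_H-1)/d_2(H)=v_H-2$.

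For $\cD$ I would directly invoke Spencer's theorem~\cite{Spencer1990} quoted in Section~\ref{sec:Janson}: since $p_+\ge n^{-1/d_2(H)}(\log n)^{b}$ for the relevant constant $b=b(H)<2$ once $n$ is large, it gives with probability $\ge1-n^{-c}$ that every pair has at most $O\bigl(n^{v_H-2}p_+^{e_H-1}\bigr)=O\bigl((\log n)^{2(e_H-1)}\bigr)$ extensions to copies of $H$; as $O\bigl((\log n)^{2(e_H-1)}\bigr)\le(\log n)^{2e_H}=\Psi_H$ for large $n$, this is exactly $G_{n,p_+}\in\cD$. The one point needing attention is that we only assume $H$ to be $2$-balanced rather than \emph{strictly} $2$-balanced: boundary subgraphs $G\subsetneq H$ with $(e_G-1)/(v_G-2)=d_2(H)$ can inflate extension counts only by $(\log n)^{O(1)}$ factors, and the factor-$(\log n)^2$ gap between $\EE N_{xy}$ and $\Psi_H$ is generous enough to absorb this (equivalently, apply Spencer's estimate with a slightly larger power of $\log n$, still below the critical $(\log n)^2$).

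For $\cI$ I would use Janson's inequality~\cite{Janson1990,JLR}, working in $G_{n,p_-}+xy$ where the $e_H-1$ non-$xy$ edges of each potential copy of $H$ through $xy$ are independent. These potential copies give increasing events with $\mu=\sum_i\PP(A_i)=\EE N_{xy}=\Theta\bigl((\log n)^{2(e_H-1)}\bigr)$, and two copies are dependent precisely when they share an edge other than $xy$, i.e.\ their common subgraph $F\subseteq H$ (containing the edge $xy$) has $e_F\ge2$ and hence $v_F\ge3$. A routine count gives $\bar\Delta=\sum_{i\sim j}\PP(A_i\cap A_j)=\Theta\bigl(\sum_F n^{2(v_H-v_F)}p_-^{\,2e_H-e_F-1}\bigr)$, and $2$-balancedness ($e_F-1\le d_2(H)(v_F-2)$) shows each term is $n^{-\Omega(1)}\mu$, so $\bar\Delta=o(\mu)$. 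Therefore $\PP(N_{xy}=0)\le e^{-\mu+\bar\Delta/2}=e^{-\Theta((\log n)^{2(e_H-1)})}\le e^{-\Omega((\log n)^2)}=n^{-\omega(1)}$, using $e_H\ge2$, and a union bound over the fewer than $n^2$ pairs yields $\PP(G_{n,p_-}\notin\cI)=n^{-\omega(1)}\le n^{-c}$.

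Combining the $\cD$ and $\cI$ estimates with the sandwich gives the lemma. There is no genuine difficulty here — the content is essentially a restatement of Spencer's concentration result for extension counts — and the only steps I would write out with care are the $G_{n,m}\leftrightarrow G_{n,p}$ transfer (trivial, but it is what lets us use independence in the Janson step) and the verification that merely $2$-balanced (not strictly $2$-balanced) suffices, handled exactly via the $(\log n)^2$ slack as above.
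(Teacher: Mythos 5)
Your overall architecture (sandwich $G_{n,p_-}\subseteq G_{n,m}\subseteq G_{n,p_+}$ via monotonicity of the decreasing $\cD$ and increasing $\cI$, Spencer's extension-counting theorem for $\cD$) is fine and consistent with the paper, whose own justification is essentially a citation of Spencer's two-sided concentration of extension counts (which covers $\cI$ as well). The genuine problem is in your Janson argument for $\cI$. First, the overlap count is wrong: two copies of $H$ through $xy$ whose intersection is $F\ni xy$ are chosen in $\Theta(n^{2v_H-v_F-2})$ ways, not $n^{2(v_H-v_F)}$, so in fact $\bar\Delta=\Theta\bigl(\sum_F n^{2v_H-v_F-2}p_-^{2e_H-e_F-1}\bigr)=\Theta\bigl(\mu^2\sum_F (n^{v_F-2}p_-^{e_F-1})^{-1}\bigr)$. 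Second, and more seriously, with the correct count the claim $\bar\Delta=o(\mu)$ is false exactly in the case the lemma is designed to cover: if $H$ is $2$-balanced but not strictly so, a proper $F\subsetneq H$ through $xy$ with $(e_F-1)/(v_F-2)=d_2(H)$ gives $n^{v_F-2}p_-^{e_F-1}=(\log n)^{2(e_F-1)}$, hence $\bar\Delta=\Theta\bigl(\mu^2(\log n)^{-2(e_F-1)}\bigr)\gg\mu$ whenever $e_F<e_H$. Concretely, for $H=K_4$ minus an edge and $F$ a triangle containing $xy$ one gets $\bar\Delta=\Theta((\log n)^{12})$ while $\mu=\Theta((\log n)^{8})$, so the bound $e^{-\mu+\bar\Delta/2}$ is vacuous. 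Ironically you flagged the non-strict case as the delicate point for $\cD$ but it is your $\cI$ step that breaks there.

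The repair is routine and is in effect already in the paper: use the generalized Janson inequality $\PP(N_{xy}=0)\le\exp\bigl(-\mu^2/(\mu+2\bar\Delta)\bigr)$ (Theorem~2.18 in~\cite{JLR}). Since $2$-balancedness gives $n^{v_F-2}p_-^{e_F-1}\ge(\log n)^{2(e_F-1)}\ge(\log n)^2$ for every admissible $F$ with $e_F\ge 2$, one has $\bar\Delta=O\bigl(\mu^2/(\log n)^2\bigr)$ and hence an exponent of order $\Omega((\log n)^2)$, i.e.\ $\PP(N_{xy}=0)=n^{-\omega(1)}$, after which your union bound over the at most $n^2$ pairs goes through. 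This is exactly the computation carried out in the proof of Theorem~\ref{thm:HE} with $\lambda=(\log n)^2$; alternatively, the $\cI$ part can be read off directly from Spencer's two-sided concentration, as the paper does.
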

The point is that whenever $\cI$ holds no further edges are added. 
This allows us to couple both variants of the reverse $H$-free 
process such that they agree with very high probability after 
considering only $m$ edges. So for our purposes they are 
interchangeable, and we obtain the corresponding formal statement 
by combining Lemma~\ref{lem:DI} with \eqref{eq:GnmHGnpH}. 
\begin{lemma}\label{lem:Cpl}
Let $H$ be a $2$-balanced graph. There is a coupling such that for 
every $c>0$ we have 
\begin{equation*}
\label{eq:CplAll} 
G_{n,m}(H) = G_{n,\binom{n}{2}}(H) = G_{n,1}(H) 
\end{equation*} 
with probability at least $1-n^{-c}$ for $n \geq n_0(c,H)$. \qed
\end{lemma}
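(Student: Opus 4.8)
The plan is to run all three graphs on one source of randomness and then invoke Lemma~\ref{lem:DI} to show that the discrete reverse $H$-free process freezes after its first $m$ steps. First I would fix the coupling, which does not depend on $c$: draw i.i.d.\ uniform birth times $B_e \in [0,1]$ for the edges $e$ of $K_n$, let $e_1, e_2, \ldots, e_{\binom{n}{2}}$ list these edges in increasing order of birth time, and run the process along this list, adding $e_i$ precisely when it does not complete a copy of $H$ together with $e_1, \ldots, e_{i-1}$. This single process simultaneously realises $G_{n,i}(H)$ for every $i$ and $G_{n,p}(H)$ for every $p$ (the latter by considering only birth times $\le p$), so in particular $G_{n,\binom{n}{2}}(H) = G_{n,1}(H)$ holds as a pointwise identity by~\eqref{eq:GnmHGnpH}. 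It therefore remains to prove that $G_{n,m}(H) = G_{n,\binom{n}{2}}(H)$ with probability at least $1-n^{-c}$.

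To this end, observe that the prefix $\{e_1, \ldots, e_m\}$ is a uniformly random set of $m$ edges, i.e.\ has the distribution of $G_{n,m}$, so by Lemma~\ref{lem:DI} (applied with the constant $c$ of the present statement) the event $\cD \cap \cI$ holds for $\{e_1, \ldots, e_m\}$ with probability at least $1-n^{-c}$ once $n \geq n_0(c,H)$; only the part $\cI$ is needed, namely that for every pair $xy$ adding $xy$ to $\{e_1, \ldots, e_m\}$ creates at least one copy of $H$ through $xy$. On $\cI$, no edge $e_i$ with $i > m$ is added: fixing such an $i$ and writing $e_i = xy$, we have $\{e_1, \ldots, e_m\} \subseteq \{e_1, \ldots, e_{i-1}\}$ since $i-1 \geq m$, so the copy of $H$ through $xy$ in $\{e_1, \ldots, e_m\} \cup \{xy\}$ provided by $\cI$ is still present in $\{e_1, \ldots, e_{i-1}\} \cup \{e_i\}$ (adding edges cannot destroy a subgraph copy), whence $e_i$ completes a copy of $H$ with $e_1, \ldots, e_{i-1}$ and is not added. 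Since $e_1, \ldots, e_m$ are processed by the same rule in $G_{n,m}(H)$ and $G_{n,\binom{n}{2}}(H)$, this gives $G_{n,m}(H) = G_{n,\binom{n}{2}}(H)$ on $\cI$; combined with the pointwise identity from~\eqref{eq:GnmHGnpH}, the lemma follows.

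The only point that needs care is the bookkeeping in the first step: checking that the single random birth-time ordering really realises all three objects as genuine pointwise identities (not merely equalities in law) and that $\{e_1, \ldots, e_m\}$ has exactly the law of $G_{n,m}$. The monotonicity step in the second paragraph is immediate, and there is no serious calculation, since all of the probabilistic substance is carried by Lemma~\ref{lem:DI}.
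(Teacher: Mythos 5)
Your proposal is correct and follows essentially the same route as the paper: couple everything through the i.i.d.\ birth times so that $G_{n,\binom{n}{2}}(H)=G_{n,1}(H)$ holds pointwise by \eqref{eq:GnmHGnpH}, and use the event $\cI$ of Lemma~\ref{lem:DI} (the first $m$ edges having the law of $G_{n,m}$) to conclude that no edge is added after step $m$, so $G_{n,m}(H)=G_{n,\binom{n}{2}}(H)$ with probability at least $1-n^{-c}$. The monotonicity argument you spell out for edges $e_i$ with $i>m$ is exactly the observation the paper summarizes by ``whenever $\cI$ holds no further edges are added.''
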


Turning to the number of edges in $G_{n,m}(H)$, which we denote by 
$e(G_{n,m}(H))$, recall that each $e_i$ is added if and only if it 
does not complete a copy of $H$ together with $e_1, \ldots, e_{i-1}$. 
So one edge can, in the worst case, influence the decisions of up to 
$O(\min\{m,n^{v_H-2}\})$ edges (whether they are added or not); 
however, on the `typical' event $\cD$ of Lemma~\ref{lem:DI} this 
is limited to at most $e_H \cdot \Psi_H = O((\log n)^{2 e_H})$ edges. 
For this reason the standard bounded differences inequality 
\emph{fails} to give useful bounds (due to large worst case $c_k$), 
whereas a \emph{routine} application of the typical bounded 
differences inequality yields sharp concentration, illustrating its 
ease of use and effectiveness. 
\begin{theorem}\label{thm:Hcon}
Let $H$ be a $2$-balanced graph. 
For every $c>0$ and $n \geq n_0(c,H)$ we have 
\begin{equation}
\label{eq:DevErr}
\PP(|e(G_{n,m}(H))-\EE e(G_{n,m}(H))| \geq \sqrt{m} (\log n)^{3 e_H}) \leq n^{-c} . 
\end{equation}
\end{theorem}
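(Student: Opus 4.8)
The plan is to apply the typical bounded differences inequality (Theorem~\ref{thm:McDExt}) to the function $f=e(G_{n,m}(H))$, viewing the random permutation of the first $m$ edges as the underlying source of randomness. Concretely, the $m$ edges $e_1, \ldots, e_m$ are considered in the order given by their birth times, and $f$ counts how many are added (i.e.\ do not complete a copy of $H$ together with earlier-born edges). I would model this via the general Lipschitz condition (GL) of Theorem~\ref{thm:McDGenExt}, since the natural source of randomness is a uniform random sequence of $m$ distinct edges (or equivalently a uniform random ordering), and the discussion following Theorem~\ref{thm:McDGenfTL} tells us that for such sequences it suffices to bound $|f(T)-f(T')|$ when $T, T'$ differ by one coordinate change or by a transposition of two coordinates. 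Alternatively, one may use the continuous birth-time model directly (independent $B_e \in [0,1]$), which fits the independent-variables framework of Theorem~\ref{thm:McDExt} more literally; I would pick whichever bookkeeping is cleanest.

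The good event $\Gamma$ will be the event $\cD$ of Lemma~\ref{lem:DI}: every pair $xy$ of vertices has at most $\Psi_H=(\log n)^{2e_H}$ copies of $H$ through it after adding $xy$ in $G_{n,m}$. By Lemma~\ref{lem:DI} we have $\PP(X \notin \Gamma) \le n^{-c'}$ for any constant $c'$ (choosing $n$ large). The key Lipschitz computation is: if we change the birth time of a single edge $e$ (or transpose two edges in the ordering), how much can $f$ change? Re-positioning one edge $e$ can only alter the add/remove decision for edges whose decision depends on whether $e$ was present earlier; since a removal decision for an edge $g$ is triggered by $g$ lying in a copy of $H$, and on $\Gamma$ each pair lies in at most $\Psi_H$ copies of $H$, the edge $e$ participates in at most $e_H \cdot \Psi_H$ relevant copies, hence its repositioning flips the decision of at most $O(e_H \Psi_H) = O((\log n)^{2e_H})$ edges. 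This gives the typical bound $c_k = O((\log n)^{2e_H})$. For the worst-case bound we may simply take $d_k = m$ (one edge can in principle influence all later ones), and then set $\gamma_k = m^{-2}$, say, so that $e_k = \gamma_k(d_k - c_k) = o(1)$ is negligible and, by \eqref{McDExt:PrB}, $\PP(\cB) \le N \cdot \gamma_k^{-1} \PP(X \notin \Gamma) \le m^{3} n^{-c'} \le n^{-c-1}$ for suitable $c'$.

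With these parameters in hand, Theorem~\ref{thm:McDExt} (or its (GL) analogue) gives
\[
\PP\bigl(|f(X) - \EE f(X)| \ge t \text{ and } \neg\cB\bigr) \le 2\exp\left(-\frac{t^2}{2 \sum_{k}(c_k+e_k)^2}\right) \le 2\exp\left(-\frac{t^2}{O\bigl(m (\log n)^{4e_H}\bigr)}\right),
\]
using $c_k+e_k = O((\log n)^{2e_H})$ and summing over the $m$ relevant coordinates. Taking $t = \sqrt{m}(\log n)^{3e_H}$ makes the exponent of order $-(\log n)^{2e_H}$, which is super-polynomially small, and combining with $\PP(\cB) \le n^{-c-1}$ yields \eqref{eq:DevErr} for $n \ge n_0(c,H)$. (One should double-check whether $(\log n)^{3e_H}$ versus $(\log n)^{4e_H}$ in the exponent is comfortable — it is, since $2 \cdot 3e_H = 6e_H > 4e_H$, so the exponent $t^2/(m(\log n)^{4e_H})$ is $(\log n)^{2e_H} \to \infty$.)

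The main obstacle is the Lipschitz estimate itself: one must argue carefully that, \emph{conditioned on $\cD$ holding (for the current configuration $x$, as required by the one-sided condition (TL))}, a single-coordinate change indeed affects only $O((\log n)^{2e_H})$ of the add/remove decisions. The subtlety is that $\cD$ is a statement about $G_{n,m}$ (all $m$ edges present), whereas the process makes decisions based on \emph{partial} graphs $\{e_j : j < i\}$; one needs that the number of copies of $H$ through any pair in any such subgraph is also bounded by $\Psi_H$, which follows monotonically since subgraphs have fewer copies — here the fact that $\cD$ is a decreasing event is what makes it transfer down to all prefixes. A second, more bookkeeping-level obstacle is tracking how one edge's repositioning can cascade: moving $e$ earlier might cause some $g$ to be removed that was previously kept, which in turn removes a copy of $H$ and might cause some $g'$ to now be kept — but since "kept"/"removed" for $g'$ depends only on whether $g'$ lies in a copy of $H$ among \emph{earlier-born kept edges}, and the only change near $g'$ is the status of edges in the $O(\Psi_H)$ copies through the relevant pairs, the cascade does not amplify beyond a constant (in $H$) times $\Psi_H$. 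Making this precise — ideally by a clean monotonicity or "dependency graph" argument rather than an unbounded recursion — is the part that needs the most care; everything else is routine substitution into Theorem~\ref{thm:McDExt}.
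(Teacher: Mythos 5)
Your overall route is the paper's own: take the randomness to be the uniform random sequence of the first $m$ edges, take $\Gamma=\cD$ from Lemma~\ref{lem:DI}, verify a typical Lipschitz constant $c_k=O(\Psi_H)=O((\log n)^{2e_H})$ for one-coordinate changes and transpositions, use a trivial $d_k$ with a tiny $\gamma_k$, and feed this into Theorem~\ref{thm:McDGenExt} (the paper does so through the two-sided variant, Theorem~\ref{thm:McDGenfTL}, with $d_k=n^2$, $q_k=n^{-2}$, $\gamma_k=n^{-4}$). However, there is a gap in exactly the step you flag as needing the most care. Your ``cascade'' worry rests on the statement that whether $g'$ is kept ``depends only on whether $g'$ lies in a copy of $H$ among earlier-born \emph{kept} edges''; that is not the process the theorem concerns. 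In the $G_{n,m}(H)$ formulation (the Erd\H{o}s--Suen--Winkler reformulation recalled at the start of Section~\ref{sec:Hfree}), $e_i$ is added if and only if it completes no copy of $H$ with \emph{all} of $e_1,\ldots,e_{i-1}$, irrespective of whether those were added. So $f(\underline{e})$ is a sum of indicators each depending only on the set of earlier-born edges, there is no recursion to control, and the Lipschitz bound is immediate: a coordinate change or transposition can flip only the indicators of edges lying in an $H$-copy together with the deleted or inserted edge, and on $\cD$ each pair of vertices lies in at most $\Psi_H$ such copies, giving at most about $2e_H\Psi_H$ flips. Without this observation your key estimate is unproven (and under your ``kept edges'' reading it would genuinely require a nontrivial argument), whereas with it the estimate is a two-line computation.

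There is also a bookkeeping mismatch between the one-sided and two-sided conditions. You use the one-sided error term $e_k=\gamma_k(d_k-c_k)$ with $\gamma_k=m^{-2}$, but the condition you actually describe checking is the two-sided one (both configurations good, as in the paper); in that case Theorem~\ref{thm:McDGenfTL} forces $e_k=2\gamma_k(d_k-c_k)q_k^{-1}$ with $q_k=n^{-2}$, and your choice $\gamma_k=m^{-2}$ then gives $e_k\approx n^2/m$, which e.g.\ for trees is about $n/(\log n)^2\gg c_k$, so $\sum_k(c_k+e_k)^2\approx n^4/m$ and the bound collapses. Either verify the genuinely one-sided condition (which is possible here: copies through the replacement edge in the modified sequence have all their other edges in the original graph, so $x\in\cD$ alone bounds them by $\Psi_H$), or keep the two-sided check and take $\gamma_k=n^{-4}$ as the paper does; both yield $e_k=O(1)$, after which your calculation with $t=\sqrt{m}(\log n)^{3e_H}$ gives an exponent of order $(\log n)^{2e_H}$ and $\PP(\cB)\le \mathrm{poly}(n)\cdot\PP(X\notin\cD)\le n^{-c-1}$, as required. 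Finally, drop the aside about applying Theorem~\ref{thm:McDExt} directly to the $\binom{n}{2}$ independent birth times: with that coordinate system $\sum_k c_k^2=\Theta(n^2(\log n)^{4e_H})\gg t^2$, so it cannot produce the claimed deviation $\sqrt{m}(\log n)^{3e_H}$.
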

\begin{proof}
Lemma~\ref{lem:DI} implies that $G_{n,m} \in \cD$ holds with 
probability at least $1-n^{-(2c+6)}$. Note that the random sequence 
of edges $\underline{e}=(e_{1},\ldots,e_{m})$ corresponds to the 
(uniform) random graph process $(G_{n,i})_{0 \leq i \leq m}$ and 
uniquely determines $f(\underline{e})=e(G_{n,m}(H))$. 
The crucial observation is that whenever $G_{n,m},\tilde{G}_{n,m} \in \cD$ 
have edge sequences $\underline{e},\underline{\tilde{e}}$ that differ 
only in one edge (i.e., $e_j \neq \tilde{e}_j$) or the order of two 
edges (i.e., $e_j = \tilde{e}_k$ and $e_k = \tilde{e}_j$), then our 
earlier observations imply $|e(G_{n,m}(H))-e(\tilde{G}_{n,m}(H))| \leq 2e_H(\log n)^{2 e_H}=\Delta$. 
The point is that by the discussion of Section~\ref{sec:GL} this is 
exactly the condition that needs to be checked in order to apply 
Theorem~\ref{thm:McDGenExt} (with the two-sided Lipschitz condition 
\eqref{eq:fGTLP} of Theorem~\ref{thm:McDGenfTL}) using $N=m$, the 
`good' event $\Gamma=\cD$, Lipschitz coefficients $c_k = \Delta$, 
$d_k=n^2$ and the `two-sided parameter' $q_k=n^{-2}$. 
For the `compensation factor' $\gamma_k=n^{-4}$ we have 
$e_k \leq 2\gamma_k d_k q_k^{-1} \leq 2$, $c_k+e_k =\Theta((\log n)^{2e_H})$ 
and $\sum_k \gamma_k^{-1} \leq n^6$. So, using \eqref{McDExt:Pr} and 
\eqref{McDExt:PrB} we deduce that the left hand side of \eqref{eq:DevErr} 
is at most $e^{-\Omega((\log n)^{2 e_H})} + n^{-2c} \leq n^{-c}$. 
\end{proof}

To establish Theorem~\ref{thm:revH} it remains to bound the expected 
final number of edges up to constant factors. Our argument is 
inspired by Makai~\cite{Makai2012}, who proved asymptomatically 
matching bounds in~\eqref{eq:EE} for the class of strictly 
$2$-balanced graphs (the case $H=K_3$ is due to Erd{\H o}s, Suen and 
Winkler~\cite{ErdoesSuenWinkler1995}). In fact, here we determine the 
correct order of magnitude for all graphs. 
\begin{theorem}\label{thm:HE}
Let $H$ be a graph with $e_H \geq 1$. There are $a,A > 0$ such that 
\begin{equation}
\label{eq:EE}
\floor{a n^{2-1/m_2(H)}} \leq \EE e(G_{n,1}(H)) \leq A n^{2-1/m_2(H)} 
\end{equation}
for $n \geq n_0(H)$, where the floor function is only needed when $e_H=1$. 
\end{theorem}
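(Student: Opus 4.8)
The plan is to work in the continuous–birth–time model of Section~\ref{sec:Hfree} and reduce the whole statement to the survival probability of a single fixed edge. Writing $\pi(q):=\PP(e_0\text{ is added}\mid B_{e_0}=q)$ for a fixed edge $e_0$, the definition of the process and symmetry over edges give $\EE e(G_{n,1}(H))=\binom n2\int_0^1\pi(q)\,dq$. Conditionally on $B_{e_0}=q$ the edges born before $e_0$ are distributed exactly as $G_{n,q}$, so $\pi(q)$ equals the probability that $e_0$ lies in no copy of $H$ in $G_{n,q}+e_0$; in particular $\pi$ is non-increasing, with $\pi(0)=1$ and $\pi(q)=o(1)$ for every fixed $q>0$. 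Hence it suffices to prove that $\pi$ drops from $\Theta(1)$ to $o(1)$ essentially at $q^\ast:=n^{-1/m_2(H)}$, since this gives $\int_0^1\pi=\Theta(q^\ast)$ and therefore $\EE e=\Theta(n^{2-1/m_2(H)})$. The degenerate cases are dealt with first: when $e_H=1$ every edge completes a copy of $H$ (itself) and is removed, so $\EE e=0$, which is exactly why the floor is needed only then; when $H$ is a matching one has $m_2(H)=1/2$, so $q^\ast=\Theta(1)$ and both inequalities follow from $0<\EE e\le\binom n2$.

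For the lower bound I would fix a small constant $c>0$ and prove $\pi(q)\ge\tfrac12$ for every $q\le c\,q^\ast$, which already gives $\int_0^1\pi\ge\tfrac c2\,q^\ast$. By a union bound this reduces to bounding by $\tfrac12$ the probability that $e_0$ lies in a copy of $H$ in $G_{n,q}+e_0$, and for this the first moment method suffices: every copy of $H$ through $e_0$ contains a copy of some subgraph $G\subseteq H$ rooted at $e_0$ or at an endpoint of $e_0$, and the expected number of such rooted copies — and, more directly, the expected number of copies of $H$ through $e_0$ — is at most $c'=c'(c)$ with $c'\to0$ as $c\to0$, precisely because $q^\ast$ is calibrated so that the relevant rooted first moments are of order one at $q=q^\ast$; summing the boundedly many subgraph contributions closes this step.

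For the upper bound I would show $\int_{q^\ast}^1\pi(q)\,dq=O(q^\ast)$, for which it is enough to prove $\pi(Kq^\ast)\le e^{-\Omega(K^{\delta})}$ for every $K\ge1$ and some constant $\delta=\delta(H)>0$: then $\int_0^1\pi\le q^\ast+q^\ast\int_1^\infty e^{-\Omega(K^\delta)}\,dK=O(q^\ast)$, and in fact $\pi(q)=n^{-\omega(1)}$ once $q\ge n^{o(1)}q^\ast$. When $H$ is (strictly) $2$-balanced this is essentially Spencer's estimate behind Lemma~\ref{lem:DI}: above its threshold every edge of $G_{n,q}+e_0$ lies in $\Omega((q/q^\ast)^{e_H-1})$ copies of $H$ with probability $1-e^{-\Omega((q/q^\ast)^\delta)}$, since $d_2(H)=m_2(H)$ makes the extension of $e_0$ to a copy of $H$ balanced. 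For general $H$ one instead first produces, with such a probability, a copy of the densest subgraph $G^\ast\subseteq H$ through the appropriate root — via a Janson-type lower-tail bound for the number of copies of $G^\ast$ through that root — and then extends it to a full copy of $H$ inside $G_{n,q}$; the extension uses only sparser sub-configurations, which are already present at $q^\ast$ and can be controlled by crude first- and second-moment bounds without spoiling the rate.

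The hard part will be exactly this quantitative step of the upper bound when $H$ is not strictly $2$-balanced: one must pin down the rooted subgraph of $H$ that governs the first appearance of a copy of $H$ through $e_0$, combine a sharp (Janson-type) lower-tail bound for the number of copies of that densest piece through its root with crude estimates for the lighter structure completing it to a copy of $H$, and check that the resulting transition indeed occurs at $q^\ast=n^{-1/m_2(H)}$. For the special families the paper singles out — such as a clique with an extra edge hanging off — one can short-circuit this using that the densest subgraph is a clique, for which Spencer's estimates apply directly, so that only the light pendant structure needs to be handled by hand.
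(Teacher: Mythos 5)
Your reduction to the survival probability $\pi(q)$ of a single edge and the identity $\EE e(G_{n,1}(H))=\binom{n}{2}\int_0^1\pi(q)\,dq$ is exactly the paper's setup, and for \emph{$2$-balanced} $H$ your plan is sound: the first-moment bound gives $\pi(q)\ge 1/2$ for $q\le c\,n^{-1/d_2(H)}$, and the upper bound is a direct Janson computation (the paper gets the lower bound slightly differently, via Harris' inequality applied to the $F$-extensions of $e$ in \eqref{eq:EE:LB}, but your route is an acceptable substitute there). The genuine gap is the case that makes Theorem~\ref{thm:HE} a statement about $m_2(H)$ rather than $d_2(H)$. Your key claim for the lower bound --- that at $q\le c\,n^{-1/m_2(H)}$ the expected number of $H$-copies through $e_0$ (or of the ``relevant rooted copies'') is small --- is false whenever $m_2(H)>d_2(H)$. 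Take $H=K_3$ with a pendant edge, so $m_2(H)=2$ and $q^\ast=n^{-1/2}$. At $q=c\,n^{-1/2}$ the copies of $H$ through $e_0=uv$ in which $e_0$ plays the pendant role correspond to triangles at $u$ avoiding $v$ (or at $v$ avoiding $u$); their expected number is $\Theta(c^3\sqrt{n})$, and a routine second-moment (or Janson) argument shows such a triangle exists with probability $1-o(1)$, so $\pi(c\,n^{-1/2})=o(1)$ rather than $\ge 1/2$. For this $H$ the drop of $\pi$ is governed by the \emph{vertex-rooted} density of $K_3$ and occurs at scale $n^{-2/3}$, not $n^{-1/m_2(H)}$, so no sharpening of rooted first moments can repair your step; you are in fact at the delicate point of the whole statement, since the paper's own lower bound uses the containment ``no $F$-extension of $e$ implies no $H$-extension of $e$'', which needs every edge of $H$ to lie in a copy of $F$ inside $H$ --- automatic for $2$-balanced $H$ (take $F=H$), but not for general $H$. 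In substance, what your argument establishes is the $2$-balanced case, i.e.\ exactly what is needed for Theorem~\ref{thm:revH}, and not the full claim.

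On the upper bound you explicitly defer the non-$2$-balanced case as ``the hard part'' and only sketch a two-stage scheme (Janson lower tail for a densest rooted piece, then crude moment bounds for the light completion); that is a second unproved step, and it is also unnecessary. The paper applies Janson's inequality \emph{once}, directly to the number $Y_{e,H,q}$ of $H$-extensions of $e$ at $q=\lambda n^{-1/m_2(H)}$: the overlap term satisfies $\Delta=O(\mu^2/\lambda)$ for \emph{every} graph $H$, simply because $d_2(G)\le m_2(H)$ for all subgraphs $G\subseteq H$ with $e_G\ge 2$, whence $\PP(Y_{e,H,q}=0)\le\exp(-Cn^{1/m_2(H)}q)$ uniformly and the integral in \eqref{eq:EE:UB} finishes the upper bound with no case analysis and no identification of a densest rooted subgraph. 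Two smaller slips: for a matching $H$ one has $q^\ast=n^{-1/m_2(H)}=n^{-2}$, not $\Theta(1)$, and the bound you need there is $\EE e(G_{n,1}(H))\le A$, a constant, which does not follow from $\EE e\le\binom{n}{2}$ (it does follow from the uniform Janson bound above); the $e_H=1$ case you handle as the paper does, and that part is fine.
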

\begin{proof}
When $e_H=1$ we have $\EE e(G_{n,1}(H))=0$ and $m_2(H)=1/2$, so 
\eqref{eq:EE} holds with, say, $a=A=1/2$. Henceforth we assume 
$e_H \ge 2$. 
Define $\cZ_e=\cZ_e(H)$ as the event that the edge $e$ is contained 
$G_{n,1}(H)$. Let $Y_{e,H,q}$ count the number of copies of $H$ in 
$G_{n,q} \cup e$ (the graph obtained by inserting $e$ into $G_{n,q}$ 
if it is not already present). Recall that, conditioned on $B_e=q$, 
only edges $f$ with $B_f \leq q$ are relevant for $\cZ_e$; so~$e$ is 
added if and only if $Y_{e,H,q}=0$. Hence for $q \in [0,1]$ we have 
\begin{equation}
\label{eq:ErrGoal:Equiv}
\PP(\cZ_e \mid B_e=q) = \PP(Y_{e,H,q}=0) . 
\end{equation}

For the lower bound in~\eqref{eq:EE} fix $F \subseteq H$ with 
$d_2(F)=m_2(H)$ that satisfies $e_F \ge 2$ (this choice is possible 
as $e_H \geq 2$). Given $e$ there are at most $Dn^{v_F-2}$ extensions 
to $F$ for some $D=D(F)>0$, so whenever $q \leq n^{-1/m_2(H)}$ holds 
monotonicity and Harris' inequality~\cite{Harris1960} yield 
\begin{equation}\label{eq:EE:LB}
\PP(Y_{e,H,q}=0) \geq \PP(Y_{e,F,q}=0) \geq (1-q^{e_F-1})^{Dn^{v_F-2}} \geq e^{-2Dn^{v_F-2}q^{e_F-1}} \geq e^{-2D} . 
\end{equation} 
Together with \eqref{eq:ErrGoal:Equiv} we obtain 
$\PP(\cZ_e) = \EE \: \PP(\cZ_e \mid B_e=q) \geq n^{-1/m_2(H)} \cdot e^{-2D}$, 
and the lower bound in~\eqref{eq:EE} now follows by linearity of 
expectation.

Turning to the upper bound in~\eqref{eq:EE}, consider 
$q=\lambda n^{-1/m_2(H)}$ with $1 \leq \lambda \leq n^{1/m_2(H)}$. 
We apply Janson's inequality to $Y_{e,H,q}$, which counts the number 
of extensions of $e$ to $H$ (viewed as subgraphs these do not contain 
the edge $e$). Note that $e_H \geq 2$, $\lambda \geq 1$ and 
$m_2(H) \geq (e_H-1)/(v_H-2)$ imply 
\[
\mu=\EE Y_{e,H,q} = \Theta(n^{v_H-2}q^{e_H-1})= n^{(v_H-2)-(e_H-1)/m_2(H)}\Theta(\lambda^{e_H-1}) = \Omega(\lambda) . 
\]
Define $\cG$ as the set of all proper subgraphs graphs 
$G \subsetneq H$ with $e_G \geq 2$. Considering all possible 
`overlaps' of extensions of $e$ to $H$ (analogous to the textbook 
proof of the small subgraphs theorem), the $\Delta$ term of Janson's 
inequality satisfies 
\begin{equation*}
\begin{split}
\Delta & \leq O(n^{v_H-2}q^{e_H-1}) \cdot \sum_{G \in \cG} O(n^{v_H-v_G}q^{e_H-e_G}) = O(\mu^2) \sum_{G \in \cG} n^{-(v_G-2)}q^{-(e_G-1)} \\ 
& = O(\mu^2) \sum_{G \in \cG} n^{-[(v_G-2)-(e_G-1)/m_2(H)]}\lambda^{-(e_G-1)} = O(\mu^2/\lambda) , 
\end{split}
\end{equation*}
where the last inequality follows from $e_G \geq 2$, $\lambda \geq 1$ 
and $m_2(H) \geq (e_G-1)/(v_G-2)$. So, using $\mu/\lambda = \Omega(1)$ 
we infer $\mu+\Delta=O(\mu^2/\lambda)$ and thus 
$\mu^2/(\mu+2\Delta) = \Omega(\lambda)=\Omega(n^{1/m_2(H)}q)$. 
Applying Janson's inequality (see e.g.\ Theorem~2.18 in~\cite{JLR}) 
we have $\PP(Y_{e,H,q}=0) \leq \exp\left(-C n^{1/m_2(H)}q\right)$ for 
$C=C(H)>0$. Combining this with \eqref{eq:ErrGoal:Equiv} when 
$q \geq n^{-1/m_2(H)}$ and the trivial bound 
$\PP(\cZ_e \mid B_e=q) \leq 1$ otherwise, for $A=1+e^{-C}/C$ we 
obtain 
\begin{equation}\label{eq:EE:UB}
\PP(\cZ_e) = \EE \: \PP(\cZ_e \mid B_e=q) \leq n^{-1/m_2(H)} + \int_{n^{-1/m_2(H)}}^{1}\exp\left(-C n^{1/m_2(H)}q\right) dq \leq A n^{-1/m_2(H)} . 
\end{equation}
Linearity of expectation now yields the upper bound in~\eqref{eq:EE}. 
\end{proof}

Our arguments partially generalize to arbitrary graphs, which we 
shall now briefly discuss. In this case Lemma~\ref{lem:DI} remains 
true if we modify $\cD$ to at most, say, $\Psi_H=(\log n)n^{v_H-2}p^{e_H-1}$ 
copies, and so the coupling of Lemma~\ref{lem:Cpl} carries over (it 
only uses $\cI$). With \eqref{eq:GnmHGnpH} in mind, Theorem~\ref{thm:HE} 
shows that the expected final number of edges is $\mu=\Theta(n^{2-1/m_2(H)})$. 
Adjusting the proof of Theorem~\ref{thm:Hcon} with $c_k=2e_H\Psi_H$, 
a short calculation shows that we obtain concentration on an interval 
of length $\mu n^{-\gamma}$ with $\gamma=\gamma(H)>0$ whenever 
\begin{equation}\label{eq:ExtBal}
\text{$v_H \geq 4$ and $m_2(H) < (2e_H-3)/(2v_H-6)$} \quad \text{ or } \quad \text{$v_H=3$ and $e_H \geq 2$} . 
\end{equation}
Perhaps surprisingly, this condition is satisfied by standard 
examples of `unbalanced' graphs such as a clique $K_r$ with an extra 
edge hanging off.

The proofs in this section also extend with minor modifications to 
the more general reverse $\cH$-free process considered in 
Theorem~\ref{thm:revFH}. In this case the `inverted' processes 
$\cG_{n,m}(\cH)$ and $\cG_{n,p}(\cH)$ are defined in analogous ways, 
where an edge is added only when it closes no copy of some 
$F \in \cH$. We need to modify $\cD$ of Lemma~\ref{lem:DI} so that 
for all $F \in \cH$ it ensures at most $\Psi_F=\max\{(\log n) n^{v_F-2}p^{e_F-1},(\log n)^2\}$ 
copies, whereas the corresponding $\cI$ only applies to the 
distinguished graph $H$ with $m_2(H)=d_2(H)=\min_{F \in \cH}d_2(F)$. 
As before, once $\cI$ holds no more edges are added. With this in 
mind the coupling of Lemma~\ref{lem:Cpl} as well as the 
concentration result of Theorem~\ref{thm:Hcon} carry over in a 
straightforward way (noting that $d_2(H) \leq d_2(F)$ implies 
$\Psi_F \leq (\log n)^{2e_F}$ for all $F \in \cH$). Turning to the 
expected final number of edges, for the lower bound of 
Theorem~\ref{thm:HE} we avoid all $F \in \cH$ simultaneously. 
The resulting modification of \eqref{eq:EE:LB} works for 
$q \leq n^{-1/m_2(H)}$ since $d_2(H) \leq d_2(F)$ implies 
$n^{v_{F}-2}q^{e_{F}-1} \leq 1$. For the upper bound it suffices to 
just avoid the distinguished $2$-balanced graph $H$, so we may reuse 
the estimates of~\eqref{eq:EE:UB} to establish Theorem~\ref{thm:revFH}.

Finally, note that every edge added by $G_{n,m}(H)$ is also added by 
the $H$-free process defined in Section~\ref{sec:HfreeIntro} (where 
$e_i$ is added if and only if it does not complete a copy of $H$ 
together with the \emph{added} edges among $e_1, \ldots, e_{i-1}$). 
It follows from Theorem~\ref{thm:HE} that the expected final number 
of edges in the $H$-free process is at least $\Omega(n^{2-1/m_2(H)})$ 
for any graph $H$, which improves the $\Omega(n^{2-1/d_2(H)})$ bound 
resulting from the deletion argument of Osthus and Taraz~\cite{OsthusTaraz2001}. 
In fact, if the technical conditions in~\eqref{eq:ExtBal} are 
satisfied our earlier discussion implies that this lower bound also 
holds with probability tending to one (not only in expectation), 
which for `unbalanced' graphs with $m_2(H) > d_2(H)$ does not follow 
from Theorem~1 in~\cite{OsthusTaraz2001}.

\bigskip{\bf Acknowledgements.} 
I am grateful to my supervisor Oliver Riordan for a very careful 
reading of an earlier version of this paper, and for many helpful 
comments. I would also like to thank Tam{\'a}s Makai for sending me 
a preprint of \cite{Makai2012}, Matas {{\v{S}}ileikis for remarks, 
and Colin McDiarmid for asking whether Theorem~\ref{thm:McDExt} 
extends to random permutations.

\small\begin{spacing}{0.4}
\bibliographystyle{plain}

\end{spacing}
\normalsize
\end{document}